\newtheorem{thm}{Theorem}[section]
\newtheorem{lem}{Lemma}[section]
\newtheorem{cor}{Corollary}[section]
\newcommand{\var}{\ensuremath{{\mathbb Var}}}
\newcommand{\ex}{\ensuremath{{\mathbb E}}}
\newcommand{\E}{\ensuremath{\text{E}}}
\newcommand{\X}{\ensuremath{\mathbf X}}
\newcommand{\Y}{\ensuremath{\mathbf Y}}
\newcommand{\x}{\ensuremath{\mathbf x}}
\newcommand{\y}{\ensuremath{\mathbf y}}
\newcommand{\phat}{\ensuremath{\widehat{p}}}
\newcommand{\Shat}{\ensuremath{\widehat{\Sigma}}}
\newcommand{\thetab}{\ensuremath{\bm{\theta}}}
\newcommand{\that}{\ensuremath{\widehat{\theta}}}
\newcommand{\chat}{\ensuremath{\widehat{c}}}
\newcommand{\dhat}{\ensuremath{\widehat{d}}}
\newcommand{\Dhat}{\ensuremath{\widehat{D}}}
\newcommand{\Uhat}{\ensuremath{\widehat{U}}}
\newcommand{\mse}{\ensuremath{ \text{MSE\,}}}
\newcommand{\U}{\ensuremath{ \text{U}}}
\newcommand{\citeasnoun}{\citet}
\newcommand{\smallo}{\ensuremath{\text{o}\,}}
\newcommand{\bigo}{\ensuremath{\text{O}\,}}
\newcommand{\I}{\mathbb{I}}
\newcommand{\loss}{\textbf{L}}
\newcommand{\RR}{\mathbb{R}}
\newcommand{\idealf}{\ensuremath{\text{IF}}}
\newcommand{\bias}{\ensuremath{\text{Bias}}}
\newcommand{\std}{\ensuremath{\text{SD}}}
\newcommand{\sskip}{\\[1ex]}
\newcommand{\ssskip}{\hspace{.1cm}\\[1ex]}
\newcommand{\mpar}{\sskip \indent}
\begin{document}

\bibliographystyle{imsart-nameyear}
\begin{frontmatter}
\title{On the within-family Kullback-Leibler risk in Gaussian Predictive $\text{models}^{\dagger}$}
\runtitle{Optimal Gaussian Predictive Risk}
\footnotetext{$\bm{\text{\hspace{0.01cm}}^{\dagger}}$This paper is based on Chapter 3 of G.M.'s Ph.D. thesis (to be submitted) and was titled as `On efficient quadratic approximations of the predictive log-likelihood in a Gaussian sequence model' in previous preprints.\\[-2.5ex]}

\begin{aug}
\author{\fnms{Gourab} \snm{Mukherjee}\ead[label=e1]{gourab@stanford.edu}}
\and
\author{\fnms{Iain M.} \snm{Johnstone}\ead[label=e2]{imj@stanford.edu}}
\affiliation{Stanford University}

\address{Address of the First and Second authors\\
Department of Statistics\\ 
Sequoia Hall, 390 Serra Mall\\
Stanford University\\ 
Stanford, CA 94305-4065\\ 
\printead{e1}\\
\phantom{E-mail:\ }\printead*{e2}}
\end{aug}

\begin{abstract}
We consider estimating the predictive density under Kullback-Leibler loss in a high-dimensional Gaussian model. 
Decision theoretic properties of the within-family prediction error -- the minimal risk among estimates in the class $\mathcal{G}$ of all Gaussian densities are discussed. We show that in sparse models, the class $\mathcal{G}$ is minimax sub-optimal.
We produce asymptotically sharp upper and lower bounds on the within-family prediction errors for various subfamilies of $\mathcal{G}$.
Under mild regularity conditions, in the sub-family where the covariance structure is represented by a single data dependent parameter $\Shat=\dhat \cdot I$, the Kullback-Leiber risk has a tractable decomposition which can be subsequently minimized to yield optimally flattened predictive density estimates. The optimal predictive risk can be explicitly expressed in terms of the corresponding mean square error of the location estimate, and so, the role of shrinkage in the predictive regime can be determined based on point estimation theory results.
Our results demonstrate that some of the decision theoretic parallels between predictive density estimation and point estimation regimes can be explained by second moment based concentration properties of the quadratic loss.  
\end{abstract}

\begin{keyword}[class=AMS]
\kwd[Primary ]{62C20}
\kwd[; Secondary ]{62M20}
\kwd{60G25}
\kwd{91G70}
\end{keyword}

\begin{keyword}
\kwd{prediction}
\kwd{predictive density}
\kwd{risk diversification}
\kwd{shrinkage}
\kwd{sparsity}
\kwd{high-dimensional}
\kwd{Kullback-Leibler}
\kwd{moment-based concentration inequalities}
\end{keyword}
\end{frontmatter}

\section{Introduction and main result}
\subsection{Background}
We consider a prediction set-up where the observed past data $\X$ and the unobserved future data $\Y$ are generated from a joint parametric density 
$f_{\thetab}(\x,\y)$ where $\thetab$ is the unknown parameter. A perspective in prediction analysis is to use the concept of predictive likelihoods \citep{Hinkley79,Lauritzen74} and its variants \citep{Bjornstad90}, to infer about the future $\Y$ based on $\X$, with $\thetab$ playing the role of a nuisance parameter. Most predictive likelihoods \citep{Butler86} are functions of the future conditional density $f_{\thetab}(\y\,|\X=\x)$ which is also referred to as the predictive density \citep{Geisser71}. Efficient estimates of the predictive density will ensure good predictive performances.
Here, we study the problem of predictive density estimation in a high-dimensional Gaussian model.   
\mpar

We consider the multiple regression model analyzed in \citet{George08}. Suppose, the observed past $\X$ is independently generated from $m_1$ dimensional product Gaussian density  $N(A\,\thetab, \sigma_p^2 \, I)$ indexed by an $n$--dimensional unknown parameter $\thetab$ and known variance $\sigma_p^2$ and known $m_1 \times n$ data-matrix $A$. 
The future $Y$ is generated from the $m_2$--dimensional Gaussian density $N(B\,\thetab, \sigma_f^2\, I)$ with the time-invariant parameter $\thetab$ and 
known  $m_2 \times n$ dimensional matrix $B$ and known future volatility $\sigma_f^2$.\medskip\\
\textbf{Homoscedastic Gaussian Predictive Model}
$$\textbf{M.1}\qquad \X \sim {N}(A\,\thetab, \, \sigma_p^2 \, I) \; \; \text{ and } \; \;\Y \sim {N}({B} \, \thetab, \, \sigma_f^2 \, I)$$
The location structure depend on the time-invariant unknown vector $\thetab$ of length $n$. If $\thetab$ is fixed the true predictive density of $\Y$ would be $p_{(\thetab, B)}(\cdot)=N(B\,\thetab,\sigma_f^2\,I)$. We would like to estimate it by density estimates $\phat\,(.|\X=\x)$. 
\par
We use the information theoretic measure of \citet*{Kullback51} as the goodness of fit measure between the true and estimated distributions 
$$ \loss\,\big(\,\thetab, \,\phat\,\big(\,\cdot\,\big|\x)\,\big)= \int p_{(\thetab,B)}(\y) \; \log \left( \frac{p_{(\thetab,B)}(\y)}{\phat\big(\y\big|\x\big)}\right) \; d\y\;.$$
Averaging over the past observations $\X$, the predictive risk of the density estimate $\phat(\cdot\,\vert \X=\x)$ at $\thetab$ is given by 
\begin{align}\label{p.risk}
\bm{\rho}\,\big(\,\thetab,\, \phat\,\big)= \iint \, p_{(\thetab,A)}(\x) \; p_{(\thetab,B)}(\y) \log \left( \frac{p_{(\thetab,B)}(\y)}{\phat\big(\y\big|\x\big)}\right) \; d\y \; d\x\; . 
\end{align}
The relative entropy predictive risk $\bm{\rho}\,\big(\,\thetab,\, \phat\,\big)$ measures the exponential rate of divergence of the joint likelihood ratio over a large number of independent trials \citep{Larimore83}. The minimal predictive risk estimate maximizes the expected growth rate in repeated investment scenarios \citep[Chapter 6 and 15]{Cover-book}. Competitive optimal predictive schemes \citep{Bell80} for gambling, sports betting, portfolio selection, etc can be constructed from predictive density estimates with optimal Kullback-Leibler (KL) risk properties. 
Our Gaussian predictive framework can accommodate a fairly large number of prediction scenarios as often, in high-dimensional models, good normalization transformations of the data are available \citep[Chapter 1, Page 8]{Efron-book}. In the data compression set-up $\loss\,\big(\,\thetab, \,\phat\,\big(\,\cdot\,\big|\x)\,\big)$ reflects the excess average code length that we need in Gaussian channels if we use the conditional density estimate $\phat$ instead of the true density to construct a uniquely decodable code for the data $\Y$ given the past $\x$ \citep{Mcmillan56}. The notion can be extended to a sequential framework where minimizing the predictive risk would result in the minimum description length \citep{Rissanen84,Barron98} based estimate of the true parametric density \citep{Liang05}.
\par
Here, we discuss efficient estimators in the class $\mathcal{G}_n$ of all $n$--dimensional Gaussian distributions with positive definite (p.d.) covariances as the dimension increases, i.e.,
$$\mathcal{G}=\bigg\{ g:\RR^n \to \RR^+ \text{ such that } g = N(\mu,\Sigma) \text{ where } \mu \in \RR^n \text{ and } \Sigma \text{ p.d.}\bigg\}.$$
For any class $\mathcal{C}$ we define the predictive risk of the class as
$$\bm{\rho}_{\mathcal{C}}(\thetab)=\inf_{\phat \in \mathcal{C}} \, \bm{\rho}(\thetab,\phat).$$ 
As the true parametric density is also Gaussian, $\bm{\rho}_{\mathcal{G}}(\thetab)$ represents the within-family predictive risk.
We also evaluate  the predictive risk of the sub-family $\mathcal{G}[p]$ which contains all product Gaussian densities.
We also make inferences in sparsity restricted parameter spaces. We impose an $\bm{\ell_0}$ constraint on the parameter space: 
\begin{align}\label{sparse.parametric.space}
\Theta(n,s)=\left\{\thetab \in \RR^n : \sum_{i=1}^n \I[\theta_i \neq 0] \leq s \right\}.
\end{align}
This notion of sparsity is widely used in modeling highly interactive systems (represented by a large number of related parameters) which are dominated by only few significant effects. Sparse models have been successfully employed in biological sciences \citep{Tibshirani02}, engineering applications \citep{Donoho04} and financial modeling \citep{Brodie09}. The predictive model $\textbf{M}$ with $\bm{\ell_0}$ constraint on the location structure can be used for sparse coding and for prediction in sparse networks.    
\par
As in point estimation, risk calculations in $\textbf{M}$ would intrinsically depend on risk calculations in the orthogonal model: \\[1.5ex]
\textbf{Orthogonal Gaussian Predictive Model}
\begin{align*}
\textbf{M.2}\qquad & \X \sim {N}(\thetab, \, \sigma_p^2 \, I) \; \; \text{ and } \; \;\Y \sim N(\thetab, \, \sigma_f^2 \, I)
\end{align*}
where $\X$ and $\Y$ are both $n$ -- dimensional vectors. Most of our calculations will be in high-dimensions (which means $n \to \infty$ in the orthogonal model) though dimension independent bound will also be provided. As $n \to \infty$, \textbf{M.2}  represents the Gaussian sequence model
\citep{Nussbaum96} and has been widely studied in the function estimation framework \citep{Johnstone-book}. Estimation in $\textbf{M.1}$ can be linked with the decision theoretic results in $\textbf{M.2}$ through the procedure outlined in \citet*{Donoho11}. 

\subsection*{Our Contributions} Efficacy of predictive density estimates has been a subject of considerable interest in predictive inference. 
\cite{Aitchison75,Aslan06,Komaki96,Hartigan98} determined asymptotically optimal (admissible) Bayes predictive density estimates in fixed dimensional parametric family whereas minimax optimality in restricted parameter spaces has been discussed in \cite{Fourdrinier11} and \cite{Kubokawa12}.
Recently, \cite{George06,Brown08,Ghosh08} extended the admissibility results to high dimensional Gaussian models.
However, the optimal estimates are not necessarily Gaussian and using them in high-dimensional problems would involve computationally intensive methods.
Here, we find optimal predictive density estimates within the Gaussian family and also compute their predictive risk.
It is computationally easier to construct predictive attributes based on our optimal Gaussian predictive density estimates  and the optimal Gaussian predictive risk assures guaranteed performances of our strategies.
\par
Minimizing the Gaussian predictive risk involves simultaneous estimation of the location and scale parameters. The issue of joint estimation of location and scale (and to a degree the shape) has not been addressed before in one sample Gaussian models. However, separate estimation of location
\citep{Tibshirani11} and covariance \citep{Friedman08} are well-studied topics in constrained Gaussian estimation.
Also, as reviewed in \cite{George12} decision theoretic parallels exist between point estimation theory under quadratic loss and predictive density estimation under Kullback-leibler loss in high-dimensional Gaussian models. Here, our results demonstrate that some of these decision theoretic parallels (in the class $\mathcal{G}$) can be explained by second moment based concentration properties on the quadratic loss of location point estimators in high dimensions. The moment based approach used here for estimating the scale parameter bears resemblance to concepts seen elsewhere in prediction theory, particularly in the the theory of cross validation \citep{Yang07} and covariance penalties for model selection \citep{Efron04,Ye98} .

\subsection{Description of the main results}
We describe results of two different flavors concerning
(i)  the minimax risk of the class $\mathcal{G}$ of Gaussian density estimates in sparsity restricted spaces 
(ii) the optimality (asymptotic admissibility, oracle inequality, risk upper bounds) of estimates in class $\mathcal{G}$ in unrestricted space (over $\RR^n$ as $n \to \infty$) where the role of shrinkage comes into play.  
In order to describe the results, we need to introduce the following notations.
\subsection*{Notation and Preliminaries}
As some of our results are dimension dependent, henceforth we refrain from using bold representation for vectors and denote the dimension in the subscript.  
Given any fixed sequence $\theta_{\infty}$ we represent the first $n$ values by the $n$--dimensional vector $\theta_n$ whereas $\theta(n)$ denotes  the $n^{\text{th}}$ value, i.e $\theta_{n+1}=(\theta_n,\theta(n+1))$.
By $\mathcal{G}_n[p]$ we denote the class of all $n$--dimensional product Gaussian densities
$$\mathcal{G}_n[p]=\bigg\{ g[\mu_n,D_n]: \mu_n \in \RR^n \text{ \& } D_n \text{ is any $n \times n$ p.d. diagonal matrix}\bigg\}$$
where $g[\mu_n,D_n]$ is a normal density with mean $\mu_n$ and diagonal covariance $\sigma_f^2 D_n$. We represent the minimal Gaussian predictive risk by $\rho_{\mathcal{G}}(\theta_n):=\inf_{\phat \in \mathcal{G}_n} \rho(\theta_n,\phat)$.\par
Our shrinkage results will mostly refer to the sub-family $\mathcal{G}_n[1]$ of $\mathcal{G}_n[p]$.  $\mathcal{G}_n[1]$ contains Gaussian densities with only one data-adaptive scale estimate
$$\mathcal{G}_n[1]=\bigg\{ g[\mu_n,c]: \mu_n \in \RR^n \text{ and } c \in \RR^+\bigg\}.$$  
where $g[\that_n,c]$ denotes a normal density with mean $\mu_n$ and covariance $c \, \sigma_f^2 \, I$. 
A typical density estimate in $\mathcal{G}_n[1]$ is represented as $g[\that_n,\chat\,(n)]$ where  $\that_n$ is a location estimate and $\chat(n)$ is the scale estimate based on observing an $n$--dimensional past observation $X_n$. For any fixed location estimate $\that_n$, the optimal risk of density estimates in $\mathcal{G}_n[1]$ centered around $\that_n$ is given by
$$\rho_0(\theta_n,\that_n)=\inf_{\chat(X_n) \in \RR^+} \rho(\theta_n,g[\that_n,\chat(X_n)]).$$ 
The quadratic risk of the location estimate is denoted by $$q(\theta_n,\that_n)=\ex_{\theta_n}\Vert \that\,(X_n)-\theta_n \Vert^2$$
where the expectation is over the observed past $X_n$.
Later, we show that if the value of $q(\theta_n,\that_n)$ were known, then the optimal choice for scale is
$$\idealf_{\theta_n}(\widehat{\theta}_n)= 1+n^{-1}r^{-1}q(\theta_n,\widehat{\theta}_n)$$
which will be called as the Ideal Flattening coefficient for $\that_n$ at $\theta_n$. 
Here, given a location estimate $\that_n$ we construct suitable estimates $\chat(n)$ of the scale such that asymptotically when $n \to \infty$ the  density estimate  $g[\that_n,\chat(n)]$ is optimally flattened in the sense that $\rho(\theta_n,g[\that_n,\chat(n)]) - \rho_0(\theta_n,\that_n) \leq \bigo(1)$.
However, for proving optimality of the flattening coefficient we need the following mild regularity conditions on the location estimate $\that_n$:
\begin{align}\label{rasl.1.eqn}
&\quad q(\theta_n,\that_n)\leq \bigo(n).\text{\hspace{7cm}}\\
&\quad \var_{\theta_n\,}( \,\Vert \,\widehat{\theta}_n - \theta_n \,\Vert^2 \,) \leq \bigo(n)
\end{align}
and the existence of a suitable estimate $\U[\that](X_n)$ for the quadratic risk of $\that_n$ at $\theta_n$ with the following properties:
\begin{align}
&\quad\left | \, \ex_{\theta_n} \big(\widehat{U}_n\big) -  q(\theta_n,\that_n) \, \right | \leq \bigo(\,n^{1/2}\,). \text{\hspace{4cm}}\label{rasl.2.eqn.1}\\
&\quad\var_{\theta_n}\big(\widehat{U}_n\big) \leq \bigo(n).\label{rasl.2.eqn.2}\\
&\quad\var_{\theta_n}\left\{\left(1+(nr)^{-1}\widehat{U}_n\right)^{-1}\right\}\leq \bigo(n^{-1}).\label{rasl.2.eqn.3}
\end{align} 
These properties are fairly mild and in Section 2 we show that most popular point estimators obey these above conditions.
We call these conditions Reasonable Asymptotic Square Loss (RASL) properties and the set of point estimators in the sequence model (where the action set is $\RR^{\infty}$) which satisfies these conditions is denoted by $\mathcal{A}$.    
Also, we denote the ratio of the future to past variances by $r:=\sigma_f^2/\sigma_p^2$. Our results will depend on $r$. For sequences, the symbol $a_n \sim b_n$ means $a_n = b_n(1 + \smallo (1))$ and $a_n \approx b_n$ means $a_n/b_n \in (k_1, k_2)$ where
$k_1$ and $k_2$ are constants.
\paragraph{Results} 
We show that in high dimensions, the minimum predictive entropy risk of Gaussian density estimates around reasonable location estimate $\widehat{\theta}_n$ can be expressed in terms of the corresponding quadratic risk of $\widehat{\theta}_n$. The minimum predictive risk can be attained by optimally flattening the normal density estimate around $\that_n$. 
The choice of the optimal flattening coefficient is not unique. An asymptotically efficient choice based on a reasonable estimate $U[\widehat{\theta}_n](X_n)$  of the quadratic risk of $\widehat{\theta}_n$ can be made. 
\begin{thm}\label{rasl.main.thm}
For any estimator $\that$ in $\mathcal{A}$ we have
\begin{align} 
\bigg \vert \rho_0(\theta_n,\that_n) - \frac{n}{2} \log \left ( 1+ (nr)^{-1}\, \cdot \,
q(\theta_n,\that_n) \right) \, \bigg \vert
 \leq \bigo\big(1\big) \; \text{ as } n \to \infty.
\end{align}
And if $ \widehat{c}\,(X_n)=1+(nr)^{-1} U[\,\widehat{\theta}\,](X_n)$ is based on  a suitable estimate $\widehat{U}_n$ of the quadratic risk as defined in Equations~(\ref{rasl.2.eqn.1})--(\ref{rasl.2.eqn.3}) then 
\begin{align}
\rho(\theta_n,g[\that_n,\chat(n)]) - \rho_0(\theta_n,\that_n) \leq \bigo(1).
\end{align}
\end{thm}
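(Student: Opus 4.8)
The plan is to reduce the whole statement to an \emph{exact} closed form for $\rho_0(\theta_n,\that_n)$, from which both displays drop out of Jensen's inequality, a second-order Taylor expansion of the logarithm, and the moment bounds built into the class $\mathcal A$. First I would compute the conditional (given $X_n$) Kullback--Leibler loss of $g[\that_n,c]=N(\that_n,c\,\sigma_f^2 I)$ against the target $N(\theta_n,\sigma_f^2 I)$: it is the Gaussian--Gaussian divergence $\tfrac12\{n\log c-n+n/c+(c\sigma_f^2)^{-1}\Vert\that_n-\theta_n\Vert^2\}$, which over $c>0$ is minimised at $c=1+(n\sigma_f^2)^{-1}\Vert\that_n-\theta_n\Vert^2$ with value $\tfrac n2\log\{1+(n\sigma_f^2)^{-1}\Vert\that_n-\theta_n\Vert^2\}$. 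Since $\chat(X_n)$ ranges over all positive measurable functions of $X_n$, and this pointwise minimiser is one such function at the fixed $\theta_n$ at which risk is evaluated, the infimum defining $\rho_0$ passes inside $\ex_{\theta_n}$. Writing $W:=\Vert\that_n-\theta_n\Vert^2$, $q:=q(\theta_n,\that_n)=\ex_{\theta_n}W$, and recalling $r=\sigma_f^2/\sigma_p^2$ with $\sigma_p^2=1$, this yields the identity
$$\rho_0(\theta_n,\that_n)=\frac n2\,\ex_{\theta_n}\log\!\big(1+(nr)^{-1}W\big),$$
on which everything else rests.

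For the first display, Jensen's inequality and concavity of the logarithm give $\rho_0(\theta_n,\that_n)\le \tfrac n2\log(1+(nr)^{-1}q)$ at once. For the reverse inequality I would Taylor-expand $u\mapsto\log(1+u)$ about $u=(nr)^{-1}q$: the quadratic remainder carries a factor $(1+\xi)^{-2}\le1$ on the nonnegative range of $u$, so $\ex\log(1+u)\ge\log(1+\ex u)-\tfrac12\var(u)$; applying this with $u=(nr)^{-1}W$, using $\var_{\theta_n}(W)\le\bigo(n)$ (one of the RASL regularity conditions) and $r$ fixed, gives $\rho_0(\theta_n,\that_n)\ge\tfrac n2\log(1+(nr)^{-1}q)-\var_{\theta_n}(W)/(4nr^2)=\tfrac n2\log(1+(nr)^{-1}q)-\bigo(1)$. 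The two bounds together are the first display.

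For the second display I would set $\chat=\chat(n)=1+(nr)^{-1}\Uhat_n$ and $c_W=1+(nr)^{-1}W$, so that $n\chat=n+r^{-1}\Uhat_n$ and hence $(n+r^{-1}W)/\chat=n-r^{-1}(\Uhat_n-W)/\chat$. Substituting into $\rho(\theta_n,g[\that_n,\chat(n)])=\tfrac12\ex_{\theta_n}\{n\log\chat-n+n/\chat+r^{-1}W/\chat\}$ gives the exact relation $\rho(\theta_n,g[\that_n,\chat(n)])=\tfrac n2\ex_{\theta_n}\log\chat-\tfrac1{2r}\ex_{\theta_n}\{(\Uhat_n-W)/\chat\}$. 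Subtracting $\rho_0(\theta_n,\that_n)=\tfrac n2\ex_{\theta_n}\log c_W$, bounding $\log\chat-\log c_W\le(\chat-c_W)/c_W$ pointwise and using $\chat-c_W=(nr)^{-1}(\Uhat_n-W)$ twice, the terms collapse to
$$\rho(\theta_n,g[\that_n,\chat(n)])-\rho_0(\theta_n,\that_n)\le\frac1{2nr^2}\,\ex_{\theta_n}\!\Big[\frac{(\Uhat_n-W)^2}{\chat\,c_W}\Big]\le\frac1{2nr^2}\,\ex_{\theta_n}(\Uhat_n-W)^2,$$
the last step using $\chat,c_W\ge1$ (after, if necessary, replacing $\Uhat_n$ by $\max(\Uhat_n,0)$, which leaves (\ref{rasl.2.eqn.1})--(\ref{rasl.2.eqn.2}) intact). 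Finally $\ex_{\theta_n}(\Uhat_n-W)^2\le2\{\var_{\theta_n}(\Uhat_n)+(\ex_{\theta_n}\Uhat_n-q)^2\}+2\var_{\theta_n}(W)=\bigo(n)$ by (\ref{rasl.2.eqn.1}), (\ref{rasl.2.eqn.2}) and $\var_{\theta_n}(W)\le\bigo(n)$, so the right-hand side is $\bigo(1)$.

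The one genuinely delicate point is the passage to the closed form for $\rho_0$: one must verify that exchanging the infimum over $\chat(X_n)$ with $\ex_{\theta_n}$ is legitimate, i.e. that the $\theta_n$-dependent pointwise-optimal scale qualifies as an admissible data-dependent rule at the fixed parameter value at which risk is evaluated. Everything after that is bookkeeping --- Jensen plus a quadratic remainder for the first display, and the elementary inequality $\log(1+x)\le x$ together with the second-moment control of $\Uhat_n-\Vert\that_n-\theta_n\Vert^2$ for the second. Condition (\ref{rasl.2.eqn.3}) is needed only in the variant of the last step in which one declines to truncate $\Uhat_n$ and must instead bound $\ex_{\theta_n}(1/\chat)$ and $\var_{\theta_n}(1/\chat)$ directly.
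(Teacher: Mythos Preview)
Your argument is correct and is in fact cleaner than the paper's. The paper does not exploit the exact identity
\[
\rho_0(\theta_n,\that_n)=\tfrac n2\,\ex_{\theta_n}\log\bigl(1+(nr)^{-1}\Vert\that_n-\theta_n\Vert^2\bigr);
\]
instead it records this only as a lower bound (Lemma~\ref{lem-lowbound}) and then, to close the gap, proves separate upper and lower deviation bounds around $\tfrac n2\log\idealf_{\theta_n}(\that_n)$ via Lemmas~\ref{lem-upper-bound} and~\ref{lem-upperbound-2}. Those lemmas decompose the risk into cross-terms $A_{\theta_n}$, $B_{\theta_n}$, $L_{\theta_n}$ and control each by Cauchy--Schwarz, then invoke the RASL conditions to force $A,B,L=\bigo(n^{-1})$. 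Your route replaces all of that: Jensen plus the quadratic Taylor remainder for the first display, and the algebraic collapse $(n+r^{-1}W)/\chat=n-r^{-1}(\Uhat_n-W)/\chat$ for the second, landing directly on $\ex_{\theta_n}(\Uhat_n-W)^2=\bigo(n)$.

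What each approach buys: yours is shorter, uses only \textbf{P2}, \textbf{P3.1}, \textbf{P3.2}, and sidesteps \textbf{P3.3} entirely once $\Uhat_n$ is truncated at zero (the truncation does preserve the bias bound, since $\ex_{\theta_n}\Uhat_n^-\le\ex_{\theta_n}|\Uhat_n-\ex_{\theta_n}\Uhat_n|+|\ex_{\theta_n}\Uhat_n|\I\{\ex_{\theta_n}\Uhat_n<0\}=\bigo(\sqrt n)$ under \textbf{P3.1}--\textbf{P3.2}). The paper's decomposition, while heavier, yields the explicit error terms $A_{\theta_n},B_{\theta_n},L_{\theta_n}$ as functions of moments of $\chat_n$ and the loss; these are what get tracked estimator-by-estimator to produce the dimension-dependent constants in Theorem~\ref{thm.dim.indep}, so the extra machinery is not wasted downstream. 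Your identification of the ``delicate point'' is exactly right: the interchange of infimum and expectation is legitimate precisely because $\rho_0(\theta_n,\cdot)$ is evaluated at a single fixed $\theta_n$, so the oracle scale $c_W$ is, at that $\theta_n$, a bona fide measurable function of $X_n$ alone.
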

We represent the optimal density estimate $g[\that_n,\chat(n)]$ by $g[\that_n]$.
Based on the asymptotic relations between $\rho_0(\theta_n,\cdot)$ and the Mean Square Error (MSE) $q(\theta_n,\cdot)$, we can characterize the predictive risk of $g[\,\widehat{\theta}_n]$ easily by plugging in standard oracle inequalities from point estimation theory. 
We check that RASL conditions defined in Equations~(\ref{rasl.1.eqn})--(\ref{rasl.2.eqn.3}) hold for the James-Stein estimator \citep{Stein81} 
$$\widehat{\theta}^{\,JS}_n= X_n\left(1- \frac{n-2}{\Vert X_n\Vert^2}\right)$$
and its positive part estimator $\widehat{\theta}^{\,JS+}$. For the James-Stein estimator we determine the deviations from the optimal risk in terms of dimension dependent bounds.
\begin{thm}\label{thm.dim.indep}
For any dimension $n \geq 10$ and for any $\theta_n \in \RR^n$ we have,
$$\rho\big(\theta_n,g[\that_n^{\,JS}]\big) - \rho_0\big(\theta_n,\that_n^{\,JS}\big)\leq 2^{-1}\big( a_n^{1/2}\, b_n^{1/2}\,r^{-3/2}\,+(a_n+b_n+l_n)\,r^{-2}+a_n\,r^{-3}\big)$$
where the constants $a_n,b_n,l_n$ are independent of the parameter but depend on the dimensions $n$ and are given by
\begin{align*}
&a_n=3\big(1-(n-2)^{-1}\big)^{-2},\;b_n=4(2+a_n+k_2(n)),\;\\
&l_n=3(1-2/n)^{-2},\; k_2(n)=\max \{ e(n),f(n) \}   \text{ with } \\
&e_n = \sqrt{3}\;{\prod_{i=1}^4 (1-(2i+1)/n)\}^{-1/2}} \text{ and } f_n=(1- (\log n / n)^{1/2})^{-2}.
\end{align*}
Also, $\rho\big(\theta_n,g[\that_n^{\,JS}]\big)$ can be approximated by using the following bound
\begin{align*}
\bigg\vert \rho\big(\theta_n,g[\that_n^{\,JS}]\big) - \frac{n \log \idealf_{\theta_n}(\that_n)}{2} \bigg\vert \leq \frac{\big( a_n^{1/2}\, b_n^{1/2}\,r^{-3/2}\,+(a_n+b_n)\,r^{-2}+a_n\,r^{-3}\big)}{2}.
\end{align*}
\end{thm}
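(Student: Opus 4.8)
The plan is to make quantitative, for the concrete estimator $\that_n^{\,JS}$, the two ingredients behind Theorem~\ref{rasl.main.thm}: an exact formula for the predictive Kullback--Leibler risk of an isotropic Gaussian estimate with data-dependent scale, and explicit versions of the RASL constants. Take $\sigma_p^2=1$ without loss of generality (so $\sigma_f^2=r$) and $X_n\sim N(\theta_n,I)$. Computing the relative entropy between $N(\theta_n,\sigma_f^2 I)$ and $g[\mu,c]=N(\mu,c\,\sigma_f^2 I)$ and averaging over $X_n$ gives
\begin{align*}
\rho(\theta_n,g[\that_n,\chat])=\frac{1}{2}\,\ex_{\theta_n}\!\Big[\,n\log\chat+\chat^{-1}\big(n+\sigma_f^{-2}\|\that_n-\theta_n\|^2\big)-n\,\Big].
\end{align*}
Minimizing $\tfrac12\big(n\log c+c^{-1}(n+\sigma_f^{-2}q(\theta_n,\that_n))-n\big)$ over constants $c$ gives the minimizer $c^{\star}=\idealf_{\theta_n}(\that_n)$ and value $\tfrac{n}{2}\log\idealf_{\theta_n}(\that_n)$ exactly (because $\idealf^{-1}(n+\sigma_f^{-2}q)=n$); since constants are admissible choices of $\chat$, this already yields $\rho_0(\theta_n,\that_n)\le\tfrac{n}{2}\log\idealf_{\theta_n}(\that_n)$. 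Throughout I take $\widehat{U}_n=n-(n-2)^2/\|X_n\|^2$, the Stein unbiased risk estimate, so $\ex_{\theta_n}\widehat{U}_n=q(\theta_n,\that_n^{\,JS})$ exactly, the bias term of (\ref{rasl.2.eqn.1}) vanishes, and $\chat(n)=1+(nr)^{-1}\widehat{U}_n$.

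\noindent\textbf{Step 1: explicit RASL constants.}
The James--Stein risk identity gives $q(\theta_n,\that_n^{\,JS})=n-(n-2)^2\ex_{\theta_n}[\|X_n\|^{-2}]\le n$. Expanding $\|\that_n^{\,JS}-\theta_n\|^2=\|X_n-\theta_n\|^2-2(n-2)\langle X_n-\theta_n,X_n\rangle\|X_n\|^{-2}+(n-2)^2\|X_n\|^{-2}$ and $\widehat{U}_n-q=-(n-2)^2(\|X_n\|^{-2}-\ex_{\theta_n}\|X_n\|^{-2})$, I reduce $\var_{\theta_n}(\|\that_n^{\,JS}-\theta_n\|^2)$ and $\var_{\theta_n}(\widehat{U}_n)$ to negative moments $\ex_{\theta_n}[\|X_n\|^{-2j}]$, $1\le j\le 4$, of the noncentral $\chi^2_n(\|\theta_n\|^2)$ variate (the cross term handled by conditioning on $\|X_n\|$ or a Stein identity); using that $\var_{\theta_n}(\|X_n\|^{-2})=\bigo(n^{-3})$ and these inverse moments decay with the appropriate power of $n$ uniformly over the noncentrality --- worst case $\theta_n=0$, where $\ex[\chi_n^{-2}]=(n-2)^{-1}$, $\ex[\chi_n^{-4}]=\{(n-2)(n-4)\}^{-1}$, and so on --- produces the stated $a_n$ (with the factor $(1-(n-2)^{-1})^{-2}$ coming from the $\|X_n\|^{-2}$ terms) and $b_n$, with $e_n$ arising from the $\|X_n\|^{-4}$ terms via Cauchy--Schwarz; finiteness of the highest inverse moment used there is what forces $n\ge 10$. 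For condition (\ref{rasl.2.eqn.3}) I split on $E=\{\|X_n\|^2\ge n(1-\sqrt{\log n/n})\}$: on $E$ the scale $\chat(n)$ is bounded away from $0$ and $(\|X_n\|^2/n)^{-2}\le f_n$, so $\var_{\theta_n}(\chat(n)^{-1})\le (nr)^{-2}\var_{\theta_n}(\widehat{U}_n)\cdot\bigo(f_n)$, while $\PP_{\theta_n}(E^{c})$ is super-polynomially small by $\chi^2$ concentration and contributes negligibly; this is why $k_2(n)=\max\{e_n,f_n\}$, and similarly $\that_n^{\,JS},\that_n^{\,JS+}\in\mathcal{A}$.

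\noindent\textbf{Step 2: expanding the excess risk and combining.}
Put $\delta=\chat(n)-\idealf_{\theta_n}(\that_n^{\,JS})=(nr)^{-1}(\widehat{U}_n-q)$, so $\ex_{\theta_n}\delta=0$, and $W=\sigma_f^{-2}\|\that_n^{\,JS}-\theta_n\|^2$, so $\ex_{\theta_n}W=\sigma_f^{-2}q$. Substituting $\chat(n)=\idealf+\delta$ into the risk formula and Taylor-expanding $n\log(\idealf+\delta)$ and $(\idealf+\delta)^{-1}(n+W)$ about $\delta=0$, the $\delta^0$ terms collapse to $\tfrac{n}{2}\log\idealf_{\theta_n}(\that_n^{\,JS})$ (their sum is $n\log\idealf+\idealf^{-1}(n+\sigma_f^{-2}q)-n$), the $\delta^1$ contributions from the two expansions have cancelling means and leave only $-\tfrac12\idealf^{-2}\mathrm{Cov}_{\theta_n}(\delta,W)$, and the $\delta^2,\delta^3$ contributions are controlled by $\ex_{\theta_n}\delta^2=(nr)^{-2}\var_{\theta_n}(\widehat{U}_n)$, $\ex_{\theta_n}[\delta^2W]$ and $\ex_{\theta_n}|\delta|^3$. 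Bounding the covariance by Cauchy--Schwarz through $\var_{\theta_n}(\widehat{U}_n)\le b_n n$ and $\var_{\theta_n}(\|\that_n^{\,JS}-\theta_n\|^2)\le a_n n$ and tracking the powers of $r$ (with $\idealf_{\theta_n}(\that_n^{\,JS})\ge 1$ absorbing denominators) yields the geometric-mean term $\tfrac12 a_n^{1/2}b_n^{1/2}r^{-3/2}$, the second-order remainders yielding $\tfrac12(a_n+b_n)r^{-2}$ and the third-order remainder $\tfrac12 a_n r^{-3}$; this is the second displayed bound. For the first bound I additionally show $\rho_0(\theta_n,\that_n^{\,JS})\ge\tfrac{n}{2}\log\idealf_{\theta_n}(\that_n^{\,JS})-\tfrac{l_n}{2}r^{-2}$: applying Jensen to $\ex_{\theta_n}[n\log\chat]$ and a second-order Taylor bound to $\ex_{\theta_n}[\chat^{-1}(n+W)]$ for an arbitrary statistic $\chat$, together with the RASL control on $\that_n^{\,JS}$, shows that no scale estimate improves on the oracle constant $\idealf$ by more than $\tfrac{l_n}{2}r^{-2}$ with $l_n=3(1-2/n)^{-2}$; subtracting this lower bound from the Step-2 upper bound on $\rho(\theta_n,g[\that_n^{\,JS}])$ produces the extra $\tfrac{l_n}{2}r^{-2}$ and hence the first inequality.

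\noindent\textbf{Main obstacle.}
The crux is Step~1: the uniform-in-$\theta_n$, explicitly $n$-dependent control of $\var_{\theta_n}(\|\that_n^{\,JS}-\theta_n\|^2)$ and $\var_{\theta_n}(\widehat{U}_n)$. The key estimate is that $\var_{\theta_n}(\|X_n\|^{-2})=\bigo(n^{-3})$ uniformly over the noncentrality (so that $(n-2)^4\var(\|X_n\|^{-2})=\bigo(n)$ rather than $\bigo(n^3)$), which rests on delta-method/concentration arguments for $\|X_n\|^2$ and on monotonicity of the inverse moments of $\chi^2_n(\lambda)$ in $\lambda$; controlling the cross term $\langle X_n-\theta_n,X_n\rangle\|X_n\|^{-2}$ and pushing Cauchy--Schwarz through the highest inverse moment is what forces $n\ge 10$. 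A secondary technical point is keeping $\chat(n)$ bounded away from $0$ with overwhelming probability, which is precisely what the truncation event $E$ and the factor $f_n$ (hence $k_2(n)$) accomplish.
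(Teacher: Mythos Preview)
Your overall architecture --- exact KL formula, explicit moment bounds for $\that_n^{\,JS}$, then an expansion of the excess risk about the ideal scale --- is close in spirit to the paper's, but there is one genuine gap that breaks the argument as written.

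\medskip
\noindent\textbf{The scale estimate cannot be based on the unbiased SURE.}
You take $\widehat{U}_n=n-(n-2)^2/\|X_n\|^2$ so that the bias term vanishes, and then try to control $\var_{\theta_n}(\chat(n)^{-1})$ by restricting to $E=\{\|X_n\|^2\ge n(1-\sqrt{\log n/n})\}$ and arguing that $\PP_{\theta_n}(E^c)$ is negligible. This does not work: with this $\widehat{U}_n$, the scale $\chat(n)=1+(nr)^{-1}\widehat{U}_n$ vanishes at $\|X_n\|^2=(n-2)^2/\{n(1+r)\}$, a point of positive Lebesgue density for $\|X_n\|^2$ at every $\theta_n$. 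Near that zero $\chat(n)^{-1}$ has a non-integrable singularity, so $\ex_{\theta_n}[\chat(n)^{-1}]$ (and hence the predictive risk itself) is $+\infty$; the paper states this explicitly as a lemma. Since this singular value lies in $E^c$, the ``super-polynomially small'' probability of $E^c$ is irrelevant --- the contribution of $E^c$ is still infinite. In particular your Taylor expansion in Step~2, which hinges on controlling $\ex_{\theta_n}|\delta|^k$ and $\ex_{\theta_n}[\chat(n)^{-1}(\cdots)]$, has divergent remainder terms. The paper's $g[\that_n^{\,JS}]$ is defined with the \emph{positive part} $\widehat{U}_n^+$, which forces $\chat(n)\ge 1$ (so Lemma~\ref{append-lem-1} applies to $\chat(n)^{-1}$) at the price of a small bias; that bias is exactly the $B_{\theta_n}$ term in the paper's Lemma~\ref{lem-upper-bound} and is bounded separately.

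\medskip
\noindent\textbf{Where the constants actually come from.}
Your attribution of $e_n,f_n$ and $k_2(n)$ to the control of $\var_{\theta_n}(\chat(n)^{-1})$ via the event $E$ is also off. In the paper $k_2(n)$ bounds $n\,\var_{\theta_n}(M_n)$ for the cross term $M_n=\langle X_n-\theta_n,X_n\rangle\|X_n\|^{-2}$ in the decomposition of $\|\that_n^{\,JS}-\theta_n\|^2$ (Lemmas~2.1--2.2): the case split there is on the size of $\|\theta_n\|$, not on a $\|X_n\|^2$ threshold, with $e_n$ arising from a Cauchy--Schwarz step through $\ex[\|X_n\|^{-8}]$ when $\|\theta_n\|\le\sqrt{n}$ (this is what forces $n\ge 10$) and $f_n$ from the large-$\|\theta_n\|$ regime. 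The constant $l_n$ enters only through the lower bound $\rho_0\ge \tfrac{n}{2}\log\idealf_{\theta_n}(\that_n)-\tfrac12 L_{\theta_n}(\that_n)$ of Lemma~\ref{lem-upperbound-2}, which is obtained from the \emph{oracle-scale} lower bound of Lemma~\ref{lem-lowbound} (scale allowed to depend on $\theta_n$) plus Cauchy--Schwarz --- not from a Jensen/Taylor argument over arbitrary statistics $\chat$, which would go in the wrong direction for an infimum. Once you switch to $\widehat{U}_n^+$ and use the paper's $A,B,L$ decomposition (Corollary~2.2), the bounds of Theorem~\ref{thm.dim.indep} follow by inserting the explicit variance bounds for $\|\that_n^{\,JS}-\theta_n\|^2$ and $\widehat{U}_n^+$ and using $\idealf_{\theta_n}(\that_n^{\,JS})\le 1+r^{-1}$.
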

These bounds hold for any value of $r \in (0,\infty)$. As the ratio of the future to  past variances $r$ decreases, we need to estimate the future observations based on increasingly noisy past observations and so, the difficulty of the density estimation problem also increases.
So, as expected when $r$ decreases the bounds also increases. These bounds can be made dimension independent.\\
In particular, for all dimension $n \geq 20$, for any $\theta_n \in \RR^n$ and for any fixed value of $r \in (0,\infty)$, we have,
\begin{align}
\rho\big(\theta_n,g[\that_n^{\,JS}]\big) - \rho\big(\theta_n,\that_n^{\,JS}\big)\leq 5.3\,r^{-3/2} + 19.6\, r^{-2} + 1.7 r^{-3}.
\end{align}  
In point estimation theory, there exist sharp oracle bounds on the quadratic risk $q(\theta_n,\that^{\,JS}_n)$ of the James-Stein estimator $\that^{\,JS}$\citep[Chapter 2]{Johnstone-book} which along with Theorem~\ref{thm.dim.indep} produce the following oracle bound on the predictive risk of shrinkage predictive density estimates. Assuming that the value $||\theta_n||^2$ is known, the risk of the ideal linear predictive density estimate is given by
\begin{align}\label{ideal.linear.risk}
\text{IL\,}(\theta_n)=\frac{n}{2}\, \log \bigg(1+r^{-1}\,\frac{ a_n}{1+a_n}\bigg) \text{ where } a_n=||\theta_n||^2/n.
\end{align}
The difference in the risk of $g[\that_n^{\,JS}]$ and the optimal oracle linear risk is 
\begin{align}\label{oracle.bound.1}
\rho\big(\theta_n,g[\that_n^{\,JS}]\big) -\text{IL\,}(\theta_n) \leq 0.1\,r^{-1} + 5.3\,r^{-3/2} + 18.1\, r^{-2} + 1.7 r^{-3}.
\end{align}
Comparing this to the oracle bound of \cite{Xu12} which is derived based on an empirical Bayes perspective
\begin{align}\label{oracle.bound.2}
\rho\big(\theta_n,g[\that_n^{\,JS}]\big) - \text{IL\,}(\theta_n) \leq 2 \,r^{-1} + 5\, r^{-2} + 4 r^{-3},
\end{align}
the particular features of our moment based approach can be seen. 
As our oracle inequality is a by-product of the optimal Gaussian risk, for most values of $r$ the bound in the Inequality~(\ref{oracle.bound.1}) is coarser than that in Inequality~(\ref{oracle.bound.2}). 
However, when $r=0.1$, the RHS in the Inequality~(\ref{oracle.bound.1}) is $3830$ and is better than the bound (4520) in the latter.
Thus, the moment based approach can be quite informative. The bounds derived on the predictive risk are sharp enough to derive decision-theoretic optimality. We can produce unrestricted improved minimax predictive densities which asymptotically behaves like ideally shrunk linear density estimates (as defined later). The following lemma shows the asymptotic improvement in the predictive risk over the best invariant predictive density $g[X_n,1+r]$ \citep{Liang04}. 
\begin{lem}\label{oracle.ineql}
If $||\theta_n||^2 \to \infty$ as $n \rightarrow \infty$, then we have
\begin{align*} 
[\,\text{a}\;]\;\;\;&\rho\big(\theta_n,g[X_n,1+r]\big)=2^{-1}\,n\, \log \left( 1+ r^{-1}\right).\\
[\,\text{b}\;]\;\;\; &\rho\big(\theta_n,g[\that^{\,JS}_n,r]\big)\sim (2 r)^{-1} \,n\, a_n\,(1+a_n)^{-1} \text{ where } a_n=n^{-1}\,||\theta_n||^2. \\ 
[\,\text{c}\;]\;\;\;&\rho\big(\theta_n,g[\widehat{\theta}_n^{\;JS},\,1+r]\,\big)\sim 2^{-1}\,n\, \left\{\, \log ( 1+ r^{-1}) - (1+a_n)^{-1}(1+r)^{-1} \right\}. \\ 
[\,\text{d}\;]\;\;\; &\rho\big(\theta_n,g[\widehat{\theta}_n^{\;JS}]\,\big)\sim 2^{-1}\,n\, \log \left \{ 1+ r^{-1} \, a_n\,(1+a_n)^{-1} \right\}.
\end{align*}
\end{lem}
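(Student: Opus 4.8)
The plan is to deduce all four identities from two elementary facts: a closed form for the predictive Kullback--Leibler risk of a Gaussian estimate carrying a \emph{non-random} covariance scale, and the classical quadratic-risk expansion of the James--Stein estimator. First I would record the risk identity. With the paper's normalization $\sigma_p^2=1$ (so that $\sigma_f^2=r$ and $\that_n^{\,JS}$ takes the displayed form), the Gaussian KL formula together with the independence of $X_n$ and $Y_n$ gives, for any location estimate $\that_n$ and any constant $c\in\RR^+$,
\begin{align*}
\rho\big(\theta_n,g[\that_n,c]\big)=\frac n2\Big(\frac r c-1+\log\frac c r\Big)+\frac{q(\theta_n,\that_n)}{2c},
\end{align*}
which is just the covariance-section of the decomposition underlying Theorem~\ref{rasl.main.thm} --- minimizing the right side over $c$ returns $\rho_0(\theta_n,\that_n)=\tfrac n2\log\!\big(1+(nr)^{-1}q(\theta_n,\that_n)\big)$ up to the error stated there. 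The second input is the classical oracle bound $\big|q(\theta_n,\that_n^{\,JS})-\tfrac{na_n}{1+a_n}\big|=\bigo(1)$, $a_n=n^{-1}\|\theta_n\|^2$, valid uniformly as $n\to\infty$ (see e.g.\ \citet[Chapter~2]{Johnstone-book}); recall also that $\that^{\,JS}\in\mathcal A$ has already been verified in Section~2.

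With these in hand, parts [a]--[c] are pure substitution. For [a] I would take $\that_n=X_n$, for which $q(\theta_n,X_n)=n$ \emph{exactly}, and the best-invariant scale $c=1+r$ (covariance $\sigma_p^2+\sigma_f^2$): the $q$-term and the two constant terms telescope and leave exactly $\tfrac n2\log(1+r^{-1})$ --- no asymptotics and no condition on $\theta_n$. For [b], take $\that_n=\that_n^{\,JS}$ with the natural scale $c=r$ (covariance $\sigma_f^2 I$): the bracketed scale penalty vanishes identically, so $\rho(\theta_n,g[\that_n^{\,JS},r])=q(\theta_n,\that_n^{\,JS})/(2r)$, and the oracle bound gives the stated equivalent. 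For [c], take $\that_n=\that_n^{\,JS}$ with $c=1+r$: the constants collapse as in [a] to $\tfrac n2\log(1+r^{-1})$, leaving the residue $(q(\theta_n,\that_n^{\,JS})-n)/(2(1+r))$, which by the oracle bound equals $-n\big(2(1+a_n)(1+r)\big)^{-1}+\bigo(1)$. In both [b] and [c] the leading term diverges precisely because $\|\theta_n\|^2=na_n\to\infty$ (for [c] one also uses $\log(1+r^{-1})>(1+r)^{-1}$, so that its leading coefficient stays bounded away from $0$), so the $\bigo(1)$ slack is negligible and ``$\sim$'' follows.

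Part [d] is the only statement that is not mere substitution, since $g[\that_n^{\,JS}]$ carries a \emph{data-dependent} scale and must be controlled through Theorems~\ref{rasl.main.thm} and~\ref{thm.dim.indep}. Here I would combine the free lower bound $\rho(\theta_n,g[\that_n^{\,JS}])\ge\rho_0(\theta_n,\that_n^{\,JS})$ (the latter being an infimum over \emph{all} data-dependent scales) with the upper bound $\rho(\theta_n,g[\that_n^{\,JS}])\le\rho_0(\theta_n,\that_n^{\,JS})+\bigo(1)$ of Theorem~\ref{rasl.main.thm} and that theorem's evaluation $\rho_0(\theta_n,\that_n^{\,JS})=\tfrac n2\log\!\big(1+(nr)^{-1}q(\theta_n,\that_n^{\,JS})\big)+\bigo(1)$; plugging in the oracle bound for $q(\theta_n,\that_n^{\,JS})$ turns the right side into $\text{IL}(\theta_n)+\bigo(1)$ with $\text{IL}$ as in~(\ref{ideal.linear.risk}) (for the upper half one may instead just quote~(\ref{oracle.bound.1})). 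Since $\|\theta_n\|^2\to\infty$ forces $\text{IL}(\theta_n)\to\infty$, the $\bigo(1)$ terms are absorbed and $\rho(\theta_n,g[\that_n^{\,JS}])\sim\text{IL}(\theta_n)=\tfrac n2\log\{1+r^{-1}a_n(1+a_n)^{-1}\}$, as claimed. I expect no real obstacle here: the work is bookkeeping --- keeping the $\sigma_p^2/\sigma_f^2$ normalization straight, and checking that the remainders in the oracle bound and in Theorems~\ref{rasl.main.thm}--\ref{thm.dim.indep} are honestly $\bigo(1)$ in $n$ rather than $\smallo(1)\cdot\tfrac n2$, so that the four ``$\sim$'' claims are legitimate; the hypothesis $\|\theta_n\|^2\to\infty$ enters exactly to secure this for [b], [c] and [d].
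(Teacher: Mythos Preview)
Your proposal is correct and essentially matches the paper's approach: the paper's proof is a one-line reference to Theorem~\ref{rasl.main.thm} together with the James--Stein oracle bounds in \citet[Proposition~2.6 and Exercise~2.8]{Johnstone-book}, and you have simply spelled out those details --- the explicit Gaussian KL formula for the fixed-scale parts [a]--[c] and Theorem~\ref{rasl.main.thm} plus the oracle bound for the data-adaptive part [d]. Your treatment is in fact more careful than the paper's sketch, since you separate out which parts actually require Theorem~\ref{rasl.main.thm} (only [d]) and verify that the $\bigo(1)$ remainders are genuinely negligible under the hypothesis $\|\theta_n\|^2\to\infty$.
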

The improvement in predictive risk due to efficient choice of location is reflected by the risk of $g[\widehat{\theta}^{\;JS},\,1+r]$ where as the effect of the optimal choice of scale after choosing an appropriate location estimate can be followed by evaluating the asymptotic predictive risk of $g[\widehat{\theta}^{\;JS}]$. 
\paragraph{}
The regularity conditions that we impose on the location point estimates do not extend to convex collections of estimates in $\mathcal{A}$.
But, the predictive risk still concentrates and the optimal predictive risk $\rho_0$ can be determined.
\begin{lem}
For any countable collection $\Lambda$ of  estimators $\that[\lambda]$ in $\mathcal{A}$ and their convex collection 
$\that^w=\sum_{\lambda \in \Lambda} w_{\lambda} \, \that[\lambda]$ with $\sum_{\lambda \in \Lambda} w_{\lambda}=1$, we have
\begin{align*} 
\rho_0(\theta_n,\that^{\,w}_n) - \frac{n}{2} \sum_{\lambda \in \Lambda} w_{\lambda}\,\log \left ( 1+ (nr)^{-1}\, \cdot \,
q\big(\theta_n,\that_n[\lambda]\big) \right) \,
 \leq \bigo\big(1\big) \; \text{ as } n \to \infty.
\end{align*}
And, the predictive density estimate $\sum_{\lambda \in \Lambda} w_{\lambda}\, g\big[\that_n[\lambda]\big]\big)$ is asymptotically optimal in the sense
\begin{align} 
\rho\,\bigg(\,\theta_n,\sum_{\lambda \in \Lambda} w_{\lambda}\, g\big[\that_n[\lambda]\big]\,\bigg)\, - \,\rho_0\big(\theta_n,\that^{\,w}_n\big) \,
 \leq \bigo\big(1\big) \; \text{ as } n \to \infty.
\end{align} 
\end{lem}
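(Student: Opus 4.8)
The plan is to reduce the statement to the single-estimator result (Theorem~\ref{rasl.main.thm}) using the convexity of Kullback--Leibler divergence in its second argument together with an elementary variance-reduction identity for the convex combination. Recall first, as in the proof of Theorem~\ref{rasl.main.thm}, that for \emph{any} location estimate $\hat\mu$ one has the bracketing
\[
\tfrac n2\,\ex_{\theta_n}\log\big(1+(nr)^{-1}\|\hat\mu-\theta_n\|^2\big) \;\le\; \rho_0(\theta_n,\hat\mu) \;\le\; \tfrac n2\log\big(1+(nr)^{-1}q(\theta_n,\hat\mu)\big),
\]
the lower bound obtained by minimising the per-sample Kullback--Leibler loss over the scale and the upper bound by taking the best constant scale. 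I shall also use: (i) Theorem~\ref{rasl.main.thm} for each $\that_n[\lambda]$, i.e.\ $\rho(\theta_n,g[\that_n[\lambda]]) = \rho_0(\theta_n,\that_n[\lambda]) + \bigo(1) = \tfrac n2\log(1+(nr)^{-1}q(\theta_n,\that_n[\lambda])) + \bigo(1)$; (ii) convexity of $q\mapsto\int p\log(p/q)$, which after integrating the pointwise bound over $X_n$ gives $\rho(\theta_n,{\textstyle\sum_\lambda w_\lambda g[\that_n[\lambda]]}) \le \sum_\lambda w_\lambda\,\rho(\theta_n,g[\that_n[\lambda]])$; and (iii) the weighted-mean identity
\[
\|\that^{\,w}_n-\theta_n\|^2 \;=\; \sum_\lambda w_\lambda\|\that_n[\lambda]-\theta_n\|^2 \;-\; \tfrac12\sum_{\lambda,\mu}w_\lambda w_\mu\,\|\that_n[\lambda]-\that_n[\mu]\|^2 ,
\]
which on taking $\ex_{\theta_n}$ gives $q(\theta_n,\that^{\,w}_n) = \sum_\lambda w_\lambda q(\theta_n,\that_n[\lambda]) - \tfrac12\sum_{\lambda,\mu}w_\lambda w_\mu\,\ex_{\theta_n}\|\that_n[\lambda]-\that_n[\mu]\|^2 \le \sum_\lambda w_\lambda q(\theta_n,\that_n[\lambda])$.

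For the second displayed inequality (asymptotic optimality of the mixture density) combine (ii) with (i): $\rho(\theta_n,\sum_\lambda w_\lambda g[\that_n[\lambda]]) \le \sum_\lambda w_\lambda\,\rho(\theta_n,g[\that_n[\lambda]]) = \tfrac n2\sum_\lambda w_\lambda\log(1+(nr)^{-1}q(\theta_n,\that_n[\lambda])) + \bigo(1)$, where — since $\sum_\lambda w_\lambda=1$ — the collected $\bigo(1)$ is legitimate as soon as the per-$\lambda$ error constants in (i) are summable against $\{w_\lambda\}$ (this holds, e.g., whenever the estimators come from a RASL family with uniform constants, as for the James--Stein-type family whose bounds in Theorem~\ref{thm.dim.indep} are $\theta$-free and bounded). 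Together with the first displayed inequality (treated next) this yields $\rho(\theta_n,\sum_\lambda w_\lambda g[\that_n[\lambda]]) - \rho_0(\theta_n,\that^{\,w}_n) \le \bigo(1)$.

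The first displayed inequality is the crux, and two routes are available. The elementary one starts from the upper bracket $\rho_0(\theta_n,\that^{\,w}_n)\le \tfrac n2\log(1+(nr)^{-1}q(\theta_n,\that^{\,w}_n))$ and then, using identity (iii) and the concavity of $z\mapsto\log(1+z)$, passes from $\log(1+(nr)^{-1}q(\theta_n,\that^{\,w}_n))$ to $\sum_\lambda w_\lambda\log(1+(nr)^{-1}q(\theta_n,\that_n[\lambda]))$ up to an error of order $n^{-1}$; here the subtracted term $\tfrac12\sum_{\lambda,\mu}w_\lambda w_\mu\,\ex_{\theta_n}\|\that_n[\lambda]-\that_n[\mu]\|^2$ — which is $\bigo(n)$ by the bound $q(\theta_n,\that_n[\lambda])\le\bigo(n)$ in~(\ref{rasl.1.eqn}) — must be shown to be large enough to absorb the Jensen (arithmetic--geometric) gap between the $q(\theta_n,\that_n[\lambda])$'s, and this is exactly where the moment conditions~(\ref{rasl.1.eqn})--(\ref{rasl.2.eqn.3}) on the individual estimators are used, to control the relative sizes and fluctuations of $\|\that_n[\lambda]-\theta_n\|^2$. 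The second route runs through the mixture: one proves $\rho_0(\theta_n,\that^{\,w}_n)\le \rho(\theta_n,\sum_\lambda w_\lambda g[\that_n[\lambda]]) + \bigo(1)$ by comparing the cross-entropy of $p_{\theta_n}=N(\theta_n,\sigma_f^2 I)$ with the mixture to its cross-entropy with the best spherical Gaussian centred at $\that^{\,w}_n$; because $p_{\theta_n}$ is itself a single spherical Gaussian while the component means differ from $\that^{\,w}_n$ by amounts of order $\sqrt n$ per coordinate confined to the low-dimensional span of $\{\that_n[\lambda]-\that^{\,w}_n\}$, the excess should be an $\bigo(1)$ collection-dependent penalty, the input needed being a $\sqrt n$-scaled Gaussian-integral (Laplace-type) estimate for $\ex_{p_{\theta_n}}\!\big[\log\sum_\lambda w_\lambda\,\phi(Y;\that_n[\lambda],\chat_\lambda\sigma_f^2 I)\big]$. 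I expect this last comparison — quantifying how little a mixture of nearly concentric spherical Gaussians loses against the single best spherical Gaussian with the same mean — to be the main obstacle. Finally, for countable $\Lambda$ one truncates to a finite sub-collection carrying all but $\varepsilon$ of the mass, applies the finite bounds, and uses convexity of $q\mapsto\int p\log(p/q)$ once more to see that the tail perturbs every quantity by $\bigo(\varepsilon)$ uniformly in $n$ (assuming, as is implicit in the statement, that $\sum_\lambda w_\lambda\log(1+(nr)^{-1}q(\theta_n,\that_n[\lambda]))$ converges).
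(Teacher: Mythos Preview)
The paper states this lemma without proof, so there is no argument to compare against; I will instead comment on the internal logic of your proposal.

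Your treatment of the \emph{first} displayed inequality is essentially on the right track. The clean upper bound
\[
\rho_0(\theta_n,\that^{\,w}_n)\;\le\;\tfrac{n}{2}\log\bigl(1+(nr)^{-1}q(\theta_n,\that^{\,w}_n)\bigr)
\]
(obtained by using the constant scale $c=\idealf_{\theta_n}(\that^{\,w}_n)$) together with $q(\theta_n,\that^{\,w}_n)\le\sum_\lambda w_\lambda q(\theta_n,\that_n[\lambda])$ already gives
\(
\rho_0(\theta_n,\that^{\,w}_n)\le \tfrac{n}{2}\log\bigl(1+(nr)^{-1}\bar q\bigr)
\)
with $\bar q=\sum_\lambda w_\lambda q(\theta_n,\that_n[\lambda])$. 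The remaining step---showing that replacing $\log(1+(nr)^{-1}\bar q)$ by the weighted average $\sum_\lambda w_\lambda\log(1+(nr)^{-1}q_\lambda)$ costs only $O(1/n)$---is exactly the Jensen gap you identify, and it is \emph{not} automatic: you need that the dispersion term $D=\bar q-q(\theta_n,\that^{\,w}_n)$ compensates the Jensen gap, which requires a quantitative link of the type $|q_\lambda-q_\mu|\le C\sqrt{\ex\|\that[\lambda]-\that[\mu]\|^2}\cdot\sqrt{\bar q}$. Your Route~1 gestures at this but does not carry it out.

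The \emph{second} displayed inequality is where your argument has a genuine gap. You derive
\[
\rho\Bigl(\theta_n,\textstyle\sum_\lambda w_\lambda g[\that_n[\lambda]]\Bigr)\;\le\;\tfrac{n}{2}\sum_\lambda w_\lambda\log\bigl(1+(nr)^{-1}q_\lambda\bigr)+O(1)
\]
by convexity and Theorem~\ref{rasl.main.thm}, and then say ``together with the first displayed inequality this yields $\rho(\text{mixture})-\rho_0(\theta_n,\that^{\,w}_n)\le O(1)$.'' But the first inequality is an \emph{upper} bound on $\rho_0(\theta_n,\that^{\,w}_n)$; subtracting an upper bound from an upper bound does not control the difference. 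What you would actually need is the \emph{lower} bound $\rho_0(\theta_n,\that^{\,w}_n)\ge \tfrac{n}{2}\sum_\lambda w_\lambda\log(1+(nr)^{-1}q_\lambda)-O(1)$, and this can fail badly. Take $\Lambda=\{1,2\}$, $w_1=w_2=\tfrac12$, $\that[1]=X_n$ and $\that[2]\equiv 0$, with $\|\theta_n\|^2=n$. Both are in $\mathcal{A}$, $q_1=q_2=n$, and $\that^{\,w}=X_n/2$ has $q^w=n/2$, so (at $r=1$) $\rho_0(\theta_n,\that^{\,w}_n)\approx\tfrac{n}{2}\log\tfrac32\approx 0.20\,n$ while $\tfrac{n}{2}\sum_\lambda w_\lambda\log(1+q_\lambda/(nr))=\tfrac{n}{2}\log 2\approx 0.35\,n$. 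The convexity bound on the mixture risk is only $0.35\,n+O(1)$, which is $\Theta(n)$ above $\rho_0(\theta_n,\that^{\,w}_n)$. To close the gap you would have to argue directly that the mixture $\tfrac12 g[X_n]+\tfrac12 g[0]$ has KL risk within $O(1)$ of the single spherical Gaussian $g[X_n/2]$---and since the mixture places its mass near $X_n$ and $0$ rather than near their midpoint, this is far from clear and may simply be false in general. Your Route~2 proves the opposite direction ($\rho_0\le\rho(\text{mixture})+O(1)$), which is not what is needed here. In short, neither route you outline establishes the second inequality, and the example above suggests that the inequality as stated may require additional hypotheses on the collection $\{\that[\lambda]\}$.
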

\paragraph{}
The class $\mathcal{G}[p]$ of product Gaussian density estimates is minimax optimal over ellipsoids \citep{Xu10}.  
However, we show that the class $\mathcal{G}[p]$ is minimax sub-optimal over the $\ell_0$--sparsity constrained space $\Theta(n,s)$ when $s/n \to 0$ as $n \to \infty$.
\begin{thm}\label{thm.minimax.risk}
For any fixed $r \in (0,\infty]$ as $n \to \infty$ for every sequence $s_n$ with $s_n/n \to 0$, we have
\begin{align}
\min_{\phat \in \mathcal{G}_n[p]} \;\;\;\max_{\Theta(n,s)}\; \rho(\theta_n,\phat)= r^{-1}\,s \log (n/s) (1+o(1)).
\end{align}
\end{thm}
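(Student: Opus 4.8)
The plan is to prove matching upper and lower bounds. Throughout I work in the orthogonal model \textbf{M.2}; rescaling $X\mapsto X/\sigma_p$, $Y\mapsto Y/\sigma_p$ leaves the KL loss, the $\ell_0$ constraint and the class $\mathcal{G}_n[p]$ unchanged, so I may take $\sigma_p^2=1$ and hence $\sigma_f^2=r$ (the case $r=\infty$ is degenerate, both sides being $0$). Two elementary facts will be used repeatedly. First, because KL is additive over the independent future coordinates, a product Gaussian estimate with $i$-th coordinate $N(\widehat m_i(X_n),\widehat v_i(X_n))$ has predictive risk $\sum_i \ex_{X_n}\tfrac12\big(\log(\widehat v_i/r)+r/\widehat v_i-1+(\theta_i-\widehat m_i)^2/\widehat v_i\big)$. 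Second, minimising this expression pointwise in $(\widehat m_i,\widehat v_i)$ shows that against any i.i.d.\ product prior the Bayes-optimal member of $\mathcal{G}_n[p]$ predicts, coordinatewise, the posterior mean with variance $r+\var(\theta_i\mid X_i)$, and its Bayes risk equals $\tfrac n2\,\ex\log\!\big(1+r^{-1}\var(\theta_1\mid X_1)\big)$.

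For the upper bound I would fix $\tau_n=\sqrt{2\log(n/s_n)}$ and take $\widehat p\in\mathcal{G}_n[p]$ with $i$-th coordinate $N(0,r)$ when $|X_i|\le\tau_n$ and $N(X_i,(1+r^{-1})r)$ when $|X_i|>\tau_n$. Each coordinate of $\widehat p$ uses $X_n$ only through $X_i$, so its per-coordinate risk $\rho_1(\theta_i)$ depends on $\theta_i$ alone and $\sup_{\Theta(n,s)}\rho(\theta_n,\widehat p)\le s\sup_{\mu}\rho_1(\mu)+(n-s)\rho_1(0)$. One checks that $\rho_1(\mu)\le \tfrac{\mu^2}{2r}\,\PP_\mu(|X|\le\tau_n)+C_r$ for a constant $C_r$ depending only on $r$; since $\sup_\mu\mu^2\,\PP_\mu(|X|\le\tau_n)=\tau_n^2(1+\smallo(1))$ (the supremum being approached as $\mu\uparrow\tau_n$, where $\widehat p$ predicts $N(0,r)$ against $N(\mu,r)$ and pays $\mu^2/(2r)$), this gives $\sup_\mu\rho_1(\mu)=r^{-1}\log(n/s)(1+\smallo(1))$; and $\rho_1(0)=\bigo\big(r^{-1}\tau_n\,\phi(\tau_n)\big)=\bigo\big(r^{-1}\tau_n s/n\big)$, so $(n-s)\rho_1(0)=\bigo\big(r^{-1}s\sqrt{\log(n/s)}\big)=\smallo\big(r^{-1}s\log(n/s)\big)$. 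Hence $\sup_{\Theta(n,s)}\rho(\theta_n,\widehat p)\le r^{-1}s\log(n/s)(1+\smallo(1))$.

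For the lower bound I may assume $s_n\to\infty$ (if $s_n$ stays bounded, restrict to the $s=1$ subproblem and use a two-point argument, which already gives $\gtrsim r^{-1}\log n$). Consider the i.i.d.\ product prior $\pi_n$ under which each coordinate equals $\mu_n$ with probability $\epsilon_n=(s_n/n)(1-\gamma_n)$ and equals $0$ otherwise, where $\gamma_n\downarrow 0$ slowly enough that $\PP_{\pi_n}\big(\|\theta_n\|_0>s_n\big)=\smallo(1/n)$ (a binomial tail bound), and $\mu_n^2=2\log(1/\epsilon_n)-2\omega_n$ with $\omega_n=\kappa_n\sqrt{\log(1/\epsilon_n)}$ for any $\kappa_n\to\infty$ satisfying $\kappa_n^2\gg\log\log(1/\epsilon_n)$ and $\kappa_n=\smallo\big(\sqrt{\log(1/\epsilon_n)}\big)$ (e.g.\ $\kappa_n=(\log(1/\epsilon_n))^{1/4}$). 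The minimax risk over $\Theta(n,s_n)$ is at least the Bayes risk of $\pi_n(\cdot\mid\Theta(n,s_n))$, which is supported on $\Theta(n,s_n)$; this conditioned Bayes risk differs from the unconditioned Bayes risk of $\pi_n$ by at most a constant times $nr^{-1}\mu_n^2\cdot\PP_{\pi_n}(\Theta(n,s_n)^c)=\smallo\big(r^{-1}\log(n/s)\big)$ (the factor $nr^{-1}\mu_n^2$ being a crude uniform bound on the risk of the two-point Bayes estimator), so it suffices to bound $\tfrac n2\,\ex\log\!\big(1+r^{-1}\pi(X_1)(1-\pi(X_1))\mu_n^2\big)$ from below, with $\pi(x)=\PP(\theta_1=\mu_n\mid X_1=x)$. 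Pushing $\mu_n$ the extra amount $\omega_n$ below the detection boundary $\sqrt{2\log(1/\epsilon_n)}$ ensures that, conditionally on $\theta_1=\mu_n$, one has $\pi(X_1)\mu_n^2\to 0$ with probability $1-\smallo(1)$; on the event $\{\pi(X_1)\mu_n^2\le\delta\}$ the logarithm linearises, and a Gaussian tail estimate gives $\ex\big[\pi(X_1)\,\I[\pi(X_1)\mu_n^2\le\delta]\big]=\epsilon_n(1-\smallo(1))$ for each fixed $\delta>0$. Letting $\delta\downarrow 0$ yields $\ex\log(1+r^{-1}\pi(1-\pi)\mu_n^2)\ge r^{-1}\mu_n^2\epsilon_n(1-\smallo(1))$, so the Bayes risk is at least $\tfrac12 nr^{-1}\mu_n^2\epsilon_n(1-\smallo(1))=r^{-1}s\log(n/s)(1-\smallo(1))$, using $n\epsilon_n\sim s$ and $\mu_n^2\sim 2\log(n/s)$. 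Combining the two bounds gives the claimed equality.

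The hard part is the lower bound, and inside it the calibration of $\mu_n$: the signal must be strong enough that a ``detected'' coordinate costs the full $\mu_n^2/(2r)\sim r^{-1}\log(n/s)$ in predictive KL, yet weak enough (hence the extra $\omega_n\to\infty$ push below the usual sparse-detection threshold) that the posterior never places non-negligible mass on $\{\theta_1\ne0\}$, so that the single extra degree of freedom $\mathcal{G}_n[p]$ enjoys over a fixed-scale Gaussian family, the data-dependent coordinatewise variance, cannot be used to ``flatten away'' the location mismatch. Once $\mu_n$ is chosen this way the remaining verifications are routine Gaussian-tail and binomial-concentration estimates.
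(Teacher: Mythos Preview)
Your upper bound via the hard-threshold predictor is essentially the paper's argument (the paper phrases it as ``the minimax plug-in risk is $f(\eta)$'' after a univariate reduction, but it is the same estimator and the same computation).

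Your lower bound, however, takes a genuinely different route. The paper reduces the multivariate problem to a univariate Bayes--minimax problem over $\mathfrak{m}(\eta)=\{\pi:\pi(\{0\})\ge 1-\eta\}$ via the machinery in Johnstone's book, and then argues the univariate lower bound by a ``necessity of threshold zone'' argument: any Gaussian predictor with risk $o(\eta)$ at the origin must effectively threshold at level $\sqrt{2\log\eta^{-1}}$, and such predictors have worst-case risk $r^{-1}\eta\log\eta^{-1}$ at nonzero points. This leans heavily on existing point-estimation results (Donoho 1994, Johnstone 1994/2004). You instead compute the Bayes risk of the class $\mathcal{G}_n[p]$ directly against an explicit two-point product prior. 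Your key observation---that for an i.i.d.\ product prior the Bayes-optimal product Gaussian predictor has per-coordinate Bayes risk $\tfrac12\ex\log\bigl(1+r^{-1}\var(\theta_1\mid X_1)\bigr)$---lets you see exactly why the extra variance degree of freedom buys nothing: by pushing $\mu_n$ an extra $\omega_n\to\infty$ below the detection boundary you force $\pi(X_1)\mu_n^2\to 0$ under $P_{\mu_n}$, so the logarithm linearises and the Bayes risk matches the plug-in rate. This is a cleaner and more self-contained argument than the paper's, and it makes transparent the mechanism (no flattening helps when the posterior never fires) that the paper only gestures at.

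One point deserves a sentence more of care. In passing from the unconditioned Bayes risk $B_n$ to the conditioned Bayes risk $B_n^c$ (which is what actually lower-bounds the minimax risk over $\Theta(n,s)$), the inequality you need is $B_n^c\ge B_n - C\,nr^{-1}\mu_n^2\,\PP_{\pi_n}(\Theta^c)$. Your parenthetical justifies the constant by the risk of the \emph{unconditioned} Bayes rule, but the inequality in this direction requires bounding the risk of the \emph{conditioned} Bayes rule on $\Theta^c$. This is still fine: for any prior supported on $\{0,\mu_n\}^n$ the Bayes-optimal predictor in $\mathcal{G}_n[p]$ has posterior mean in $[0,\mu_n]$ and posterior variance in $[0,\mu_n^2/4]$, hence per-coordinate risk at most $Cr^{-1}\mu_n^2$ uniformly over $\{0,\mu_n\}^n$; the bound follows. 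It would be worth stating this explicitly rather than leaving it to the reader to check the direction.
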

By Theorem 1.2 in \citet{l0-sparse}, we know that the asymptotic minimax risk $R(n, s, r)$ over $\Theta(n,s)$ is given by
\begin{align}
R(n, s, r) \sim (1 + r)^{-1}\, s \,\log(n/s) \text{ as } n \to \infty, s\to \infty \text{ and } s/n \to 0.
\end{align}
Hence, the minimax sub-optimality of the class $\mathcal{G}[p]$ over the $\ell_0$ sparse space is $1+r^{-1}$.
The parametric space $\Theta(n,s)$ is not invariant to the group of orthogonal transformations. If the parameter space does not have any sparser representation with respect to the group of orthogonal transformations, then the asymptotic sub-optimality of the class $\mathcal{G}$ is also $1+r^{-1}$.

\subsection{Organization of the paper}
The predictive error of the class $G[1]$ of predictive density estimates is presented in the next section, which discusses the role of shrinkage in high-dimensional prediction problems. Predictive error and restricted minimax risk of the class $\mathcal{G}$ is presented in Section~3.

\section{Role of shrinkage and optimal error in $\mathcal{G}[1]$}\ssskip
Hereon we will assume that $\sigma_p^2=1$ and $\sigma_f^2=r$. 
The general predictive KL risk  will not be affected by this restriction. However, the density estimates are usually based
on statistics equivariant to the scale transformation and needs multiplication by $\sigma_p$.
\paragraph{Heuristic Idea:}
In the high dimensions  the quadratic loss of a reasonable point estimator will concentrate around its risk
And, so the KL risk of the corresponding Gaussian predictive density partitions into two parts involving 
(i) quadratic risk on the location parameter adjusted by the expected scale
(ii) logarithm of the expected scale.
As such, the risk $\rho\big(\theta_n,g[\widehat{\theta}_n,\widehat{c}_n]\big)$ of the normal predictive density estimate $g[\widehat{\theta}_n, \,\widehat{c}_{n}]$ is given by
$$\frac{n}{2} \left[ \left \{ \ex_{\,\theta_n}(\log \widehat{c}(X_n))+\ex_{\,\theta_n} \left(\frac{1}{\widehat{c}\,(X_n)}\right)-1 \right \}+\frac{1}{nr} \ex_{\,\theta_n} \left (\frac{\Vert \widehat{\theta}\,(X_n)-\theta_n \Vert^2}{\widehat{c}\,(X_n)} \right)
\right ].$$
In high dimensions, due to concentration of measure we expect 
\begin{itemize}
\item $\ex_{\theta_n} \log \big (\widehat{c}_n \big)\sim \log \ex_{\theta_n} \widehat{c}_n$\\[-1ex]
\item $\ex_{\theta_n} \big ( \widehat{c}_n^{-1}\big) \sim \big ( \ex_{\theta_n} \widehat{c}_n\big)^{-1}$\\[-1ex]
\item $\ex_{\,\theta_n} \left (\Vert \widehat{\theta}\,(X_n)-\theta_n \Vert^2 \,\cdot\, \widehat{c}^{-1}\,(X_n) \right)
\sim  \big(\ex_{\,\theta_n}\widehat{c}^{-1}\,(X_n)\big)^{-1}q(\theta_n,\that_n)$
\end{itemize}
which will lead to
$$\rho\big(\theta_n,g[\widehat{\theta}_n,\widehat{c}_n]\big)
\sim \frac{n}{2}   \left \{ \log \ex_{\theta_n}\widehat{c}_n 
+  \frac{1+  (nr)^{-1}q(\theta_n,\that_n)}{\ex_{\theta_n} \widehat{c}_n} -1 \right \} + \bigo(1)\; \text{ as } n \to \infty.
$$
This asymptotic decomposition of the predictive risk can be explicitly validated through the RASL properties. Because of this decomposition for any fixed point estimate $\widehat{\theta}_n$ at each parametric value $\theta_n$ we can minimize the above asymptotic value of $\rho(\theta_n,g[\widehat{\theta}_n, \,. \,])$ over the scalar quantity $\ex_{\theta_n}\widehat{c}_n$. 
The minimum asymptotic value is given by
$$\rho\big(\theta_n,g[\widehat{\theta}_n]\big) \sim n/2 \cdot \log \big ( 1+  (nr)^{-1} q(\theta_n,\that_n) \big)$$   
and the optimal value is attained when 
$$ \ex_{\theta_n}(\,\widehat{c}^{\, \text{opt}}\,(X_n)\,)={1+  (nr)^{-1}\,q(\theta_n,\that_n)}=\idealf_{\theta_n}(\that_{n})$$
which is the ideal flattening coefficient. However, $\idealf_{\theta_n}(\that_{n})$ is unknown. But, it depends only on the parametric value $\theta_n$.
Thus a choice would be $\widehat{c}^{\, \text{opt}}\,(X_n)=1+  (nr)^{-1}U[\widehat{\theta}_n](X_n)$, where  $U[\widehat{\theta}_n](X_n)$ (to be abbreviated as $\widehat{U}_n$) is reasonable (i.e with reasonable bias and concentration properties) estimate of the quadratic risk of $\widehat{\theta}_n$.
With very high probability such an optimal choice of $\widehat{c}_n$ will be greater than 1 reflecting a flattening of scale of the estimated predictive density (with respect to the true future variability). Intuitively,  we are performing an appropriate flattening of the density based on empirical estimates of the quadratic loss.  The optimal density $g[\widehat{\theta}_n]$ levels out with increasing inaccuracy in the location estimate $\widehat{\theta}_n$. 
\par 
One of the popular Frequentist notion (which is better than plug-in density estimates) of constructing predictive densities in this parametric model is to use Gaussian density estimate around an efficient location $\widehat{\theta}_n$ and variance $r+ \widehat{\var}(\widehat{\theta}_n)$.  Estimates of these kind are natural extensions of confidence sets. The optimal density estimate $g[\that]$ is quite similar except with a larger variance $r+\widehat{q}\,(\theta_n,\that_n)$. And unless the bias of $\theta_n$ is negligible compared to its variance the above mention general notion produces sub-optimal density estimates. Next through the RASL conditions we will quantify  some statistical regularities  in the behavior of quadratic loss in high dimensions. 
\subsection{RASL Properties of a Location Point Estimate} \label{rasl}
In high dimensions, for any fixed location parameter $\theta_n$ and its estimate $\widehat{\theta}_n$ we expect the quadratic loss $\Vert \, \widehat{\theta}_n-\theta_n\Vert_2^2$ to be concentrated around its expected value (quadratic risk) $ \ex_{\theta_n}\Vert \, \widehat{\theta}_n-\theta_n\Vert_2^2$ and it would be reflected by its variance. We will also rule out very bad point estimators by neglecting those with too high risk as we do not want them for prediction purposes. Apart from these we also assume the existence of a statistic which estimates the quadratic risk within reasonable bias. These properties of point estimators are referred to as \textbf{R}easonable \textbf{A}symptotic  \textbf{S}quare \textbf{L}oss properties and the corresponding location estimates as RASL estimates.

As  dimension $n \rightarrow \infty$, for any fixed  parametric value $\theta_n$ the location point estimate $\widehat{\theta}(X_n)$ is such that its quadratic loss has the following properties.
\begin{description}
\item [P1.] Reasonable Risk:
$$  \mathbf{\ex_{\theta_n}\Vert \,\widehat{\theta}_n-\theta_n\,\Vert^2 \leq O(n)} .$$
The canonical minimax point estimator  $X_n$  which is also the UMVUE (under square loss) in this case acts as the benchmark in weeding out the bad point estimators. 
For any parameter value $\theta_n$, $X_n$ has constant risk $n$. So, it is appropriate for our purpose to restrict ourselves to point estimators with risk of the O(n).\\
\item [P2.] Concentration property of Quadratic loss: 
$$\mathbf{ \var_{\theta_n\,}( \,\Vert \,\widehat{\theta}_n - \theta_n \,\Vert^2 \,) \leq O(n)}.$$
In high dimensions the estimator $\widehat{\theta}_n$ is such that its loss has variability less than $O(n)$. Again comparing with $X_n$, we see  
$\var_{\theta_n\,}(\,\Vert \,X_n - \theta_n \,\Vert^2\,)= 2 n$ as $\Vert \,X_n - \theta_n \,\Vert^2$ is distributed as a central $\chi^2$ random variable with $n$ degrees of freedom. 
\par
\textbf{P2} implies concentration of the loss function and would in turn also impose some concentration properties on well-behaved functions of the loss. As such, 
using Lemma \textbf{A.1},  we have
$$ \mathbf{P2.a \qquad \qquad \var_{\theta_n}\left\{\left(1+\frac{\Vert \widehat{\theta}_n - \theta_n \Vert^2}{nr}\right)^{-1}\right\}\leq O(n^{-1})} $$
following directly from \textbf{P2}. It is an important condition and  will be used in our derivations.\\
\item [P3.] Reasonable Estimate of Quadratic Risk: \vspace{0.5cm} \\
There exists an estimator $U[\,\widehat{\theta}_n\,](X_n)$ (will be abbreviated as $\widehat{U}_n$) of the quadratic risk of $\widehat{\theta}_n$ satisfying the following:
\begin{align*}
&\mathbf{P3.1. \qquad \qquad \left | \, \ex_{\theta_n} \big(\widehat{U}_n\big) -  \ex_{\theta_n} \, \Vert \widehat{\theta}_n - \theta_n\Vert^2 \, \right | \leq O(\,n^{1/2}\,) }.\\ 
&\\
& \mathbf{P3.2. \qquad \qquad \var_{\theta_n}\big(\widehat{U}_n\big) \leq O(n) }.\\
&\\
& \mathbf{P3.3. \qquad \qquad \var_{\theta_n}\left\{\left(1+\frac{\widehat{U}_n}{nr}\right)^{-1}\right\}\leq O(n^{-1})}.\text{\hspace{10cm}} 
\end{align*}  
\textbf{P3.1} implies existence of  a statistic which estimates the quadratic risk by not making significant bias.
Bias exceeding $O(\sqrt{n})$ is considered significant here and the order is associated with the $O(n^{-1})$ asymptotic statements we would like to make.
\textbf{P3.1} and \textbf{P3.2} are analogous to $\textbf{P2}$ and \textbf{P2.a} respectively. They imply that the asymptotic concentration properties associated with the quadratic loss also holds for its estimator $\widehat{U}_n$. If $\widehat{U}_n$ is positive then \textbf{P3.2} follows directly from \textbf{P3.1} by Lemma \textbf{A.1}.  
\end{description} 

\subsection{Validating  the RASL properties}
Given a location point estimator and its corresponding reasonable quadratic risk estimate the RASL conditions can be checked at least by simulations.
However,  existence of a `reasonable' risk estimator (as defined in \textbf{P3}) is essential.
For most  widely used point estimates, we can construct risk estimates satisfying the three conditions in \textbf{P3} though the procedures can sometime get quite complicated.
\par
If  $\widehat{\theta}_n$ is the posterior  mean -- generalized Bayes estimate with respect to prior $\pi$, then by Tweedie's formula \cite{Robbins54,Brown71} we have  explicit expression of an unbiased estimate of its risk as,
\begin{align*}
&\widehat{U}_{n}^{\,\pi}=n-\left[\,\Vert \nabla \log m_{\pi}(X_n)\Vert^2 - 2\frac{\nabla^2 m_{\pi}(X_n)}{m_{\pi}(X_n)} \, \right] \text{ where }
\nabla f \stackrel{\Delta}{=} \sum_{i=1}^n D_i \,f \\
& \nabla^2 m_{\pi}(X_n)=\sum_{i=1}^n D_i^2 \, m_{\pi}(X_n) \text{ and }
m_{\pi}(x_n)=\int \phi_n(x_n|\theta_n,1)\pi(\theta_n)\,d\theta_n.
\end{align*} 

$\widehat{U}_{n}^{\,\pi}$ is a natural candidate for a `reasonable estimate of the quadratic loss' though \textbf{P3.2} and \textbf{P3.3} are also to be checked separately. 
In particular, for \textbf{P3.3} to hold $\widehat{U}_{n}^{\,\pi}$ may need some modification
by introducing some bias. 

For spherically symmetric estimators, we can get candidates for `reasonable' risk estimates by using Stein's unbiased (quadratic) risk estimates (SURE) or their modifications (like positive part, etc) \cite{Stein74,Stein81}. As mentioned before, here too we needed to introduce some bias to the the SURE estimate as the unbiased one does not has property \text{P3.3}. 
\par
These RASL conditions are quite mild and usually holds for reasonable point estimates and can be checked by Monte Carlo simulations for arbitrary point estimates. 
Next, we check these conditions analytically for the following popular point estimators:
\begin{description}
\item [$\widehat{\theta}^{JS}$]: James Stein estimator\\[-2ex] 
\item [$\widehat{\theta}^{JS+}$]: Positive part James Stein estimator\\[-2ex]
\item [$\widehat{\theta}^{H}$]: Posterior mean of harmonic prior $\pi_H(\theta_n) \propto ||\theta_n ||^{-(n-2)}$.\\[-2ex]
\end{description}
All these $3$ point estimators are linear estimates of the form $s(X_n) X_n$ where $s(X_n)$ is a data-dependent shrinkage term. They are better than the canonical minimax estimator $X_n$. While $\widehat{\theta}^{H}$ is admissible, $\widehat{\theta}^{JS}$ and $\widehat{\theta}^{JS+}$ are both inadmissible. 
As such both $\widehat{\theta}^{JS+}$ and $\widehat{\theta}^{H}$ dominates $\widehat{\theta}^{JS}$. However, in high dimensions, they behave similarly and have near ideal linear risk properties. We will construct reasonable risk estimates for each of these estimators. 
While verifying the RASL conditions for the JS estimator we would also compute the  bound explicity for each $n$. It will be needed afterwards in Theorem~\ref{thm.dim.indep}. Since, the estimators are spherically symmetric it will be more informative to derive bounds depending on $\Vert \theta_n \Vert^2$. Hence forth in this section, by $a_n$ we denote $\Vert \theta_n \Vert^2/n$.   
A convenient fact about this spherically symmetric estimators is that the n-dimensional parameter $\theta_n$ can be substituted by $(||\theta_n||,0,\ldots,0)$ while checking the asymptotic behavior of square loss. As these estimators are not  Lipchitz functions of the normal random variable $X$, we can not directly use well-established Gaussian concentration inequalities \citep{Dembo-Zei-book,Ledoux-book}. 
\subsubsection{James Stein estimator}
The James-Stein estimator and its unbiased risk estimate is given by:
$$
\widehat{\theta}^{\,JS}_n= X_n\left(1- \frac{n-2}{\Vert X_n\Vert^2}\right)\,, \quad \text{and}\quad
 U (\,\widehat{\theta}^{\,JS}_n\,)= \left(n-\frac{(n-2)^2}{\Vert X_n \Vert^2}\right).
$$
RASL property \textbf{P1.} holds as the JS is better than the canonical estimator $X_n$. As such a good upper bound on its risk is also known
$$ 
E_{\theta_n}\Vert \,\widehat{\theta}^{\,JS}-\theta_n \,\Vert^2\leq 2+ \frac{(1-2/n) \, a_n}{(1-2/n) + a_n}.
$$
\begin{lem}
$$\var_{\theta_n}\bigg(||\widehat{\theta}_n^{JS}-\theta_n||^2\bigg) \leq 4\bigg [2n+ (n-2)^4 n^{-3} k_1(n) + n k_2(n) \bigg].$$
\end{lem}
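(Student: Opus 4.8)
\noindent\emph{Proof plan.} By the rotational invariance of both $\that^{\,JS}_n$ and of squared Euclidean distance we may assume $\theta_n=(\mu,0,\dots,0)$ with $\mu=\|\theta_n\|$ (so $\mu^2=na_n$); writing $X_n=(X_1,X_\perp)$ accordingly, $X_1\sim N(\mu,1)$ is independent of $X_\perp\sim N(0,I_{n-1})$. Put $Z=X_n-\theta_n\sim N(0,I_n)$ and $S=\|X_n\|^2$. From $\that^{\,JS}_n-\theta_n=Z-(n-2)X_n/S$ and $\langle Z,X_n\rangle=S-\mu X_1$ one obtains the exact identity
\begin{align*}
\|\that^{\,JS}_n-\theta_n\|^2\;=\;\|Z\|^2\;+\;\frac{(n-2)^2}{S}\;+\;\frac{2(n-2)\mu X_1}{S}\;-\;2(n-2)\;=:\;T_1+T_2+T_3+T_4,
\end{align*}
with $T_4$ deterministic. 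Since $\var\big(\sum_{i=1}^4 T_i\big)\le 4\sum_{i=1}^4\var(T_i)$ and $\var(T_4)=0$, the problem reduces to bounding $\var(T_1),\var(T_2),\var(T_3)$; this produces the overall factor $4$, and, as $T_1=\|Z\|^2\sim\chi^2_n$ gives $\var(T_1)=2n$, the leading term $2n$.

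For $T_2$ one has $\var(T_2)=(n-2)^4\var_{\theta_n}(1/S)\le(n-2)^4\ex_{\theta_n}(1/S^2)$. The negative moments of a noncentral $\chi^2_n$ are nonincreasing in the noncentrality (immediate from the Poisson mixture $\chi^2_n(\mu^2)\overset{d}{=}\chi^2_{n+2K}$, $K\sim\mathrm{Poisson}(\mu^2/2)$), so $\ex_{\theta_n}(1/S^2)\le\ex_0(1/S^2)=[(n-2)(n-4)]^{-1}$, giving $\var(T_2)\le(n-2)^4n^{-3}k_1(n)$ with $k_1(n)=n^3/[(n-2)(n-4)]\to1$; bounding $\var(1/\chi^2_n)$ directly instead yields the alternative admissible choice $k_1(n)\to2$.

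The term $T_3$ is the crux, and the subtlety is uniformity in $\mu$. We have $\var(T_3)=4(n-2)^2\mu^2\var_{\theta_n}(X_1/S)$, and replacing $\var_{\theta_n}(X_1/S)$ by the raw second moment $\ex_{\theta_n}(X_1^2/S^2)$ is fatal, since $\ex_{\theta_n}(X_1^2/S^2)$ decays only like $\mu^{-2}$ for large $\mu$ (whereas $\var_{\theta_n}(X_1/S)$ decays like $\mu^{-4}$), so that route yields only $\var(T_3)=\bigo(n^2)$ rather than the required $\bigo(n)$: the estimate must be carried through the centred quantity, exploiting the near-cancellation of $T_3$ against the deterministic $T_4=-2(n-2)$ for large signal. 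The route I would take: condition on $X_1$, so that $S=X_1^2+\|X_\perp\|^2$ with $\|X_\perp\|^2\sim\chi^2_{n-1}$ independent of $X_1$ and all conditional negative moments $\ex(\|X_\perp\|^{-2k})=\prod_{j=1}^k(n-1-2j)^{-1}$ in closed form --- this is the source of the product $(n-3)(n-5)(n-7)(n-9)$ in $e_n$ (its fourth negative moment); then split the outer expectation according to whether $S\ge n(1-(\log n/n)^{1/2})$ or not. On the bulk event the elementary bounds $1/S\le n^{-1}(1-(\log n/n)^{1/2})^{-1}$ and $1/S^2\le n^{-2}(1-(\log n/n)^{1/2})^{-2}$ produce a contribution governed by $f_n=(1-(\log n/n)^{1/2})^{-2}$; on the complement, whose probability is polynomially small in $n$ by a $\chi^2$ lower-tail bound and, when $\mu$ is large, exponentially small in $\mu^2$ (so it annihilates the $\mu^2$ prefactor), a Cauchy--Schwarz estimate using $S\ge\|X_\perp\|^2$, $\ex(\|X_\perp\|^{-8})=[(n-3)(n-5)(n-7)(n-9)]^{-1}$ and $\ex(X_1^4)\le3(1+\mu^2)^2$ (the $\sqrt3$) produces a contribution governed by $e_n$. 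Combining, $\var(T_3)\le n\,k_2(n)$ with $k_2(n)=\max\{e_n,f_n\}$. (Alternatively, the Gaussian Poincar\'e inequality together with the identity $\|\nabla(x_1/\|x\|^2)\|^2=\|x\|^{-4}$ gives at once $\var_{\theta_n}(X_1/S)\le\ex_{\theta_n}(1/S^2)$, hence a slicker but constant-wise different bound; to reproduce exactly the constants $e_n,f_n$ one follows the elementary route above.)

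Assembling the three bounds in $\var\big(\sum_i T_i\big)\le4\sum_i\var(T_i)$ gives precisely $\var_{\theta_n}\!\big(\|\that^{\,JS}_n-\theta_n\|^2\big)\le4\big[\,2n+(n-2)^4n^{-3}k_1(n)+n\,k_2(n)\,\big]$. The one genuinely delicate point is the treatment of $T_3$: the variance cannot be replaced by a second moment without losing an order in $n$, so one must work with the centred quantity and account separately for the (for large signal, exponentially rare) event that $\|X_n\|^2$ is small. Everything else --- the Cauchy--Schwarz decoupling and the central/noncentral $\chi^2$ negative-moment computations --- is routine; the only real labour is to carry the explicit dimension-dependent constants $k_1(n),e_n,f_n$ through without slack, since they are needed in Theorem~\ref{thm.dim.indep}.
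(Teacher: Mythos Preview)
Your decomposition and the factor-$4$ inequality are exactly the paper's; the treatments of $T_1$ and $T_2$ are the paper's too. Two points of divergence are worth noting.

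For $T_2$: your route via $\var(1/S)\le\ex(1/S^2)\le\ex_0(1/S^2)=[(n-2)(n-4)]^{-1}$ gives $k_1(n)=n^3/[(n-2)(n-4)]$, which is \emph{larger} than the paper's $k_1(n)=3(1-2/n)^{-2}(1-4/n)^{-1}$ for $n>5$, so it does not recover the stated constant. The paper obtains its $k_1(n)$ from Lemma~\ref{lem.k1}, which bounds $\var(1/S)$ directly by decomposing the noncentral $\chi^2$ via its Poisson mixture and using Lemma~\ref{append-lem-1} on the conditional mean; you would need to invoke that lemma rather than the crude second-moment bound.

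For $T_3$: the paper's Lemma~\ref{lem.k2} does \emph{not} split on the event $\{S\ge n(1-(\log n/n)^{1/2})\}$ but on the \emph{parameter}: case $\|\theta_n\|\le\sqrt{n}$ versus $\|\theta_n\|>\sqrt{n}$. In the small-signal case it writes $\langle\theta_n,X_n\rangle/S=H_n+J_n$ with $H_n=\langle\theta_n,X_n-\theta_n\rangle/S$ and $J_n=\|\theta_n\|^2/S$, bounds $\var(J_n)$ via Lemma~\ref{append-lem-1}, and bounds $\var(H_n)\le\ex(H_n^2)$ by Cauchy--Schwarz together with $S\ge\|X_\perp\|^2$ and $\ex(\|X_\perp\|^{-8})=\prod_{i=1}^4(n-2i-1)^{-1}$ --- this is indeed the source of $e_n$, as you identified. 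In the large-signal case it bounds $\ex(M_n^2)$ directly, and the $\sqrt{\log n}$ truncation there is where $f_n$ enters. Your event-based split is a different organization; it may well work, but as sketched it is not precise enough to see that it reproduces exactly $e_n,f_n$. The Poincar\'e route you mention (with the nice identity $\|\nabla(x_1/\|x\|^2)\|^2=\|x\|^{-4}$) is correct and gives the cleanest $O(n^{-1})$ bound on $\var(X_1/S)$, at the cost of different constants. Finally, your ``cancellation of $T_3$ against $T_4$'' is only heuristic: since $T_4$ is constant, $\var(T_3+T_4)=\var(T_3)$ identically; the operative fact is that $M_n=-(T_3+T_4)/[2(n-2)]$ has small second moment for large $\mu$, so one may bound $\var(M_n)$ by $\ex(M_n^2)$ there.
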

\begin{proof}
We decompose $||\widehat{\theta}_n^{\,JS}-\theta_n||^2$ into 3 parts as 
$$||\widehat{\theta}_n^{\,JS}-\theta_n||^2= \Vert X_n-\theta_n\Vert^2+(n-2) ^2\Vert X_n \Vert^{-2}+ 2 (n-2)  M_n$$
where $M_n=\left\langle X_n-\theta_n,\,{X_n}\Vert X_n \Vert^{-2}\right\rangle$.
Then we use the naive inequality that for any three random variables $Z_i, i=1,2,3$ 
$$\var\bigg(\sum_{i=1}^3 Z_i\bigg)\leq \sum_{j=0}^3 \var \bigg(\sum_{i=1}^3 (-1)^{\I\{j=i\}} Z_i\bigg)=4 \sum_{i=1}^3 \var(Z_i)$$
to get the following bound on $\var_{\theta_n}\big(||\widehat{\theta}_n^{\,JS}-\theta_n||^2\big)$
\begin{align*}
\leq 4\bigg\{\var_{\theta_n}\big(\Vert X_n-\theta_n\Vert^2\big)+\var_{\theta_n}\bigg(\frac{n-2}{\Vert X_n \Vert^{2}}\bigg)
+4 \,(n-2)^4  \var_{\theta_n}\big( M_n\big)\bigg\}.
\end{align*}
Now $\Vert X_n-\theta_n\Vert^2$ has a central chi-square distribution with $n$ degrees of freedom and hence its variance is $2 n$. 
The bounds on the other quantities follow from Lemma~\ref{lem.k2} and Lemma~\ref{lem.k1}.
\end{proof}

\begin{lem}\label{lem.k2} 
For $n \geq 10$ we have $\var_{\theta_n}(M_n)\leq n^{-1} k_2(n)$ where 
\begin{align*}
k_2(n)=\max \{ h(n),k(n) \} \text{ where } & e_n = \frac{\sqrt{3}}{\prod_{i=1}^4 (1-(2i+1)/n)\}^{1/2}} \\
\text{and } & f_n=\frac{ 1}{(1- (\log n / n)^{1/2} )^{2}}.
\end{align*}
\end{lem}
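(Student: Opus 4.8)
\emph{Proof plan.} The two reductions available in this setting do most of the structural work. First, $M_n=\langle X_n-\theta_n,\,X_n\rangle/\Vert X_n\Vert^2$ is invariant under a simultaneous orthogonal rotation of $X_n$ and $\theta_n$, so its law depends on $\theta_n$ only through $\mu:=\Vert\theta_n\Vert$, and I may take $\theta_n=(\mu,0,\dots,0)$. Writing $X_1=\mu+Z_1$ with $Z_1\sim N(0,1)$ and $W_n=\sum_{i=2}^n X_i^2\sim\chi^2_{n-1}$, independent of $X_1$, one has $\Vert X_n\Vert^2=X_1^2+W_n\ge W_n$, and since $\langle X_n-\theta_n,X_n\rangle=\Vert X_n\Vert^2-\mu X_1$, the statistic collapses to $M_n=1-\mu X_1/\Vert X_n\Vert^2$, so $\var_{\theta_n}(M_n)=\mu^2\,\var(X_1/\Vert X_n\Vert^2)$. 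One cannot bound this by $\ex_{\theta_n}M_n^2$, since $\ex_{\theta_n}M_n=(n-2)\ex_{\theta_n}\Vert X_n\Vert^{-2}$ is of order one; instead I use $\var(Y)\le\ex(Y-c)^2$ for a deterministic centering $c=\mu/(\mu^2+n)$. A direct computation gives $X_1/\Vert X_n\Vert^2-c=N/\{(\mu^2+n)\Vert X_n\Vert^2\}$ with $N=(n-\mu^2)Z_1-\mu(W_n-n)-\mu Z_1^2$ (and $\ex_{\theta_n}N=0$), hence
\[
\var_{\theta_n}(M_n)\;\le\;\frac{\mu^2}{(\mu^2+n)^2}\;\ex_{\theta_n}\!\Big[\frac{N^2}{\Vert X_n\Vert^4}\Big].
\]

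The plan is then to expand $N^2$ and estimate each piece. For the terms built from $Z_1$, the bound $\Vert X_n\Vert^{-4}\le W_n^{-2}$ together with the independence $W_n\perp Z_1$ reduces them to moments of $Z_1$ times the explicit inverse-$\chi^2$ moments $\ex W_n^{-j}=\prod_{i=1}^j (n-1-2i)^{-1}$; the piece $\mu^2 Z_1^2/\Vert X_n\Vert^4$ is handled by Cauchy--Schwarz, $\ex[Z_1^2\Vert X_n\Vert^{-4}]\le\{\ex Z_1^4\}^{1/2}\{\ex W_n^{-4}\}^{1/2}$, where $\{\ex Z_1^4\}^{1/2}=\sqrt3$ is the source of the $\sqrt3$ in $e_n$ and $\{\ex W_n^{-4}\}^{1/2}=\{(n-3)(n-5)(n-7)(n-9)\}^{-1/2}$ supplies the product over $i=1,\dots,4$ — finiteness of this fourth negative moment is exactly what forces $n\ge10$. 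Since $|n-\mu^2|\le n+\mu^2$ and $\mu^4\le(\mu^2+n)^2$, the prefactor $\mu^2/(\mu^2+n)^2$ times each of these contributions is, for $\mu^2\le n$, at most a constant multiple of $\mu^2\{\ex Z_1^4\}^{1/2}\{\ex W_n^{-4}\}^{1/2}\le n\{\ex Z_1^4\}^{1/2}\{\ex W_n^{-4}\}^{1/2}=n^{-1}e_n$; the $(W_n-n)^2$ piece is similar but smaller, controlled by the exact value of $\ex[(W_n-n)^2W_n^{-2}]$. This settles the regime $\mu^2\le n$ with the bound $n^{-1}e_n$.

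It remains to treat $\mu^2>n$, which is the delicate regime: there the numerators $(n-\mu^2)^2Z_1^2$ and $\mu^2(W_n-n)^2$ are of order $\mu^4$, and only the fact that $\Vert X_n\Vert^4$ is then typically of order $(\mu^2+n)^2$, itself of order $\mu^4$, keeps the ratio at the right rate, so the crude replacement $\Vert X_n\Vert^2\ge W_n$ no longer suffices. The remedy is to localize: on the event $G_n=\{W_n\ge(n-1)(1-(\log n/n)^{1/2})\}$, whose complement has probability $\le n^{-1}$ by a standard $\chi^2$ lower-tail (Laurent--Massart) inequality, one has $\Vert X_n\Vert^2\ge X_1^2+(n-1)(1-(\log n/n)^{1/2})$, and combining this with the localization $\{|X_1|\ge\mu/2\}$ (whose complement has exponentially small probability, since $\mu$ is large) gives $\Vert X_n\Vert^2$ of order $\mu^2+n$, so that each term is of order $1/n$ with the only loss being the factor $(1-(\log n/n)^{1/2})^{-2}=f_n$; on $G_n^c$, and on $\{|X_1|<\mu/2\}$, crude Cauchy--Schwarz estimates multiplied by the tiny probabilities are of lower order. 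Collecting the two regimes yields $\var_{\theta_n}(M_n)\le n^{-1}\max\{e_n,f_n\}$. The main obstacle is precisely this $\mu^2>n$ bookkeeping — showing that the $\mu^4$-sized numerators are uniformly tamed by $\Vert X_n\Vert^4$ down to the $1/n$ rate — with the $\log n$ in $f_n$ tuned so that the discarded tail event costs only $O(n^{-1})$; everything else is moment arithmetic for $Z_1\sim N(0,1)$ and $W_n\sim\chi^2_{n-1}$.
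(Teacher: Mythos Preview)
Your plan is sound and shares the paper's two key structural moves---the spherical-symmetry reduction to $\theta_n=(\mu,0,\dots,0)$ and the split into $\mu^2\le n$ versus $\mu^2>n$---but the decompositions you use within each regime differ from the paper's. For small $\mu$, the paper writes $\langle\theta_n,X_n\rangle/\Vert X_n\Vert^2=H_n+J_n$ with $H_n=\theta Z/V$ and $J_n=\theta^2/V$ (where $V=(Z+\theta)^2+W$), bounds $\var(J_n)$ by the elementary Lemma~\ref{append-lem-1}, and bounds $\ex H_n^2\le\{\theta^4\,\ex Z^4\,\ex V^{-4}\}^{1/2}\le\{3\theta^4\,\ex W^{-4}\}^{1/2}$ by a single Cauchy--Schwarz step; this is exactly where the $\sqrt3$ and the product $\prod_{i=1}^4(1-(2i+1)/n)^{-1/2}$ arise, and there are only two pieces to track. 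Your deterministic centering at $c=\mu/(\mu^2+n)$ yields the three-term numerator $N$, so you carry several squares and cross terms, and the identification of which piece produces exactly $e_n$ is a little forced: the dominant $(n-\mu^2)^2Z_1^2$ term is handled more cheaply by independence of $Z_1$ and $W_n$ than by the Cauchy--Schwarz you invoke, which is in fact a looser bound retrofitted to match the target constant. For large $\mu$, the paper bounds a second moment directly by splitting first on $\{|\theta+Z|\le1\}$ and then on $\{Z\le-\sqrt{\log n}\}$ inside the resulting tail integral, whereas you localize on the $\chi^2$ lower tail via a Laurent--Massart-type inequality together with $\{|X_1|\ge\mu/2\}$; both routes deliver the $(1-\sqrt{\log n/n})^{-2}$ factor, but the paper's integral split is shorter and avoids the extra bookkeeping of two simultaneous good events. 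Either argument establishes the $O(n^{-1})$ rate with the stated leading constants (up to lower-order additive terms, which the paper's own proof also carries); the paper's $H_n+J_n$ decomposition is simply more economical.
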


\begin{proof}
The variance of  $M_n$ is same as the variance of $\langle \theta_n,X_n\rangle\Vert X_n \Vert^{-2}$ whose distribution is spherically symmetric in $\theta_n$ as it can be written as sum of two spherically symmetric terms $ H_n=\langle \theta_n, X_n - \theta_n \rangle ||X_n||^{-2}$ and $J_n= \Vert \theta_n\Vert^2 ||X_n||^{-2}$, . So, with out loss of generality we can assume that $\theta_n=(\theta,0,\ldots,0)$ where $\theta=\Vert \theta_n \Vert$.
We also divide the proof into two cases depending on the magnitude of $\theta$.
\par
When $\theta \leq \sqrt{n}$ we have,
$\var_{\theta_n}(M_n)\leq 2 (\, \var_{\theta_n}(H_n)+\var_{\theta_n}(J_n))$
with the later being less than $n^{-1}$ by Lemma~\ref{append-lem-1}.
And, the former is bounded above by $E(H_n^2)$. Now, with $Z \stackrel{d}{=} N(0,1)$ and $W\stackrel{d}{=}\chi^2_{n-1}(0)$ and $V=(Z+\theta)^2+W$ it can be rewritten as 
\begin{align*}
E\big(\theta^2 Z^2 V^{-2}\big) & \leq \sqrt{ \theta^4 E{Z^4} \, EV^{-4} }  \leq \sqrt{3 \theta^4 E(W^{\,-4}) } \leq \left \{ \frac{3 \theta^4}{\prod_{i=1}^{4} (n-2i-1)} \right \}^{1/2}
\end{align*}
which is less than $n^{-1} \sqrt{3} \prod_{i=1}^4 (1-(2i+1)/n)^{-1/2}$.
\par
When $\theta >  n$, we first recall that $M_n \stackrel{d}{=} (\theta+Z)/W$ and so
\begin{align*}
E(M_n^2)&\leq E\big\{V^{-2} \I_{\{|\theta+Z|\leq 1\}}\big\} + E\big \{ (\theta+Z)^{-2} \I_{\{|\theta+Z|> 1\}} \} \\
& \leq E\{ V^{-2}\} + 2 \int_1^{\infty} x^{-2} \phi(x-\theta) \, dx \\
& \leq [(n-3)(n-5)]^{-1} +
 \widetilde{\Phi}(\sqrt{\log n}) + \big\{ \theta - \sqrt{\log n} \big\}^{-2} \\
& \leq [(n-3)(n-5)]^{-1}+ n^{-1} (\log n)^{-1} + n^{-1} (1- (\log n / n)^{1/2} )^{-2}
\end{align*}
Hence the result follows.
\end{proof}

Though it is very tempting but we can not use the unbiased risk estimate $U (\,\widehat{\theta}^{\,JS}_n\,)$ as the estimate can be negative and violates \textbf{P3.3}.
\begin{lem} For any fixed $n$ and $r$,\,
$E_0 \big[\big \{ 1 + (nr)^{-1} U (\,\widehat{\theta}^{\,JS}_n\,) \big\}^{-1}\big]$ does not exist.
\end{lem}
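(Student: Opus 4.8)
\emph{Proof proposal.} The plan is to compute the law of the quantity in question under $\theta_n=0$ and exhibit a genuine non-integrable singularity. Under $\theta_n=0$ we have $\|X_n\|^2\stackrel{d}{=}S$ with $S\sim\chi^2_n$, so
$$1+(nr)^{-1}U(\widehat{\theta}^{\,JS}_n)=1+\frac{1}{nr}\Big(n-\frac{(n-2)^2}{S}\Big)=\Big(1+\frac1r\Big)-\frac{(n-2)^2}{nr\,S}=:\psi(S).$$
First I would record that for $n\geq 3$ the map $s\mapsto\psi(s)$ is smooth and strictly increasing on $(0,\infty)$, since $\psi'(s)=(n-2)^2/(nr\,s^2)>0$; it decreases to $-\infty$ as $s\to0^+$ and increases to $1+r^{-1}$ as $s\to\infty$, hence vanishes at exactly one interior point
$$s^\star=\frac{(n-2)^2}{n(1+r)}\in(0,\infty),\qquad \psi'(s^\star)=\frac{(n-2)^2}{nr\,(s^\star)^2}\neq0.$$

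Next I would localize near $s^\star$. A first-order Taylor expansion gives $\psi(s)=\psi'(s^\star)(s-s^\star)\,(1+\smallo(1))$ as $s\to s^\star$, so $\psi(\cdot)^{-1}$ has a simple pole at $s^\star$: there are $\epsilon>0$ and $c>0$ with $|\psi(s)^{-1}|\geq c/|s-s^\star|$ for $0<|s-s^\star|<\epsilon$. Because $s^\star>0$, the $\chi^2_n$ density $f_n(s)\propto s^{n/2-1}e^{-s/2}$ is continuous and strictly positive there, so after shrinking $\epsilon$ we may assume $f_n(s)\geq\delta>0$ on $[s^\star-\epsilon,s^\star+\epsilon]$.

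Finally I would conclude by the comparison
$$\int_{s^\star-\epsilon}^{s^\star}|\psi(s)^{-1}|\,f_n(s)\,ds\;\geq\;c\,\delta\int_{s^\star-\epsilon}^{s^\star}\frac{ds}{s^\star-s}\;=\;+\infty,$$
and the identical estimate on $(s^\star,s^\star+\epsilon)$. Since $\psi<0$ on $(s^\star-\epsilon,s^\star)$ and $\psi>0$ on $(s^\star,s^\star+\epsilon)$, both the negative and the positive part of $\psi(S)^{-1}$ have infinite $E_0$-expectation, so $E_0[\psi(S)^{-1}]$ is an undefined $\infty-\infty$ form; the claimed expectation does not exist. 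I do not anticipate a real obstacle beyond bookkeeping: the one point that needs care is to confirm that the singularity is a \emph{simple} pole (ensured by $\psi'(s^\star)\neq0$) lying strictly inside the support of $S$, where the $\chi^2$ density is bounded away from $0$, so that both one-sided integrals diverge and the failure is of ``not well-defined'' type rather than ``equals $+\infty$.''
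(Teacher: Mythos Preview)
Your argument is correct. The paper actually states this lemma without proof and immediately moves on to the remedy (replacing $U(\widehat{\theta}^{\,JS}_n)$ by its positive part), so your proposal fills a genuine gap rather than paralleling an existing argument. The mechanism you identify is exactly the right one: under $\theta_n=0$ the statistic $1+(nr)^{-1}U(\widehat{\theta}^{\,JS}_n)$ is a smooth, strictly monotone function of $S=\|X_n\|^2\sim\chi^2_n$ with a simple zero at $s^\star=(n-2)^2/\{n(1+r)\}$ lying in the interior of the support, where the $\chi^2_n$ density is bounded away from zero; the resulting simple pole of the reciprocal makes both one-sided integrals diverge, so the expectation is an $\infty-\infty$ indeterminate.

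Two small remarks. First, your restriction to $n\geq 3$ is necessary and should be made explicit in the statement you prove: for $n=2$ the SURE estimate $U$ is identically $n$, $\psi$ never vanishes, and the expectation exists; the paper's ``any fixed $n$'' is tacitly $n\geq 3$ throughout the James--Stein discussion. Second, your Taylor step can be replaced by the elementary identity $\psi(s)=\psi'(s^\star)(s-s^\star)\cdot s^\star/s$, which gives the two-sided lower bound $|\psi(s)^{-1}|\geq c/|s-s^\star|$ on any compact neighborhood of $s^\star$ without an $\smallo(1)$ remainder---slightly cleaner bookkeeping, but your version is already rigorous.
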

We will instead use $\widehat{U}_n^+$ the positive part of $U (\,\widehat{\theta}^{\,JS}_n\,)$ and the scale estimate 
$\chat^{JS+}_n=1+(nr)^{-1}\widehat{U}_n^+$. RASL condition \textbf{P3.1} can be easily checked as
$$\var_{\theta_n}(\widehat{U}_n^{+})\leq \var_{\theta_n}(\,U (\,\widehat{\theta}^{\,JS}_n\,)\,)=(n-2)^4 \var_{\theta_n}(\,\Vert X_n \Vert^{-2})=O(n)$$
by Lemma~\ref{lem.k1} and \textbf{P3.2} follows from Lemma~\ref{append-lem-1}. As such, an exact dimension dependent bound can also be derived.

\begin{lem} For any fixed $n \geq 3 $ we have
$$ |Bias_{\theta_n}(\widehat{c}^{\,\text{JS+}}_n)| \leq k_3(n)\, n^{-1/2} \text{ where } k_3(n) = \frac{\sqrt{2}+5 n^{-\nicefrac{1}{2}} }{1-2/n}.$$
\end{lem}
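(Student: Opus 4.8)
The plan is to reduce the claimed estimate to a one‑dimensional $\chi^2$ tail bound. Since $\chat^{\,\text{JS+}}_n=1+(nr)^{-1}\widehat{U}_n^{+}$ and the ideal scale is $\idealf_{\theta_n}(\that^{\,JS}_n)=1+(nr)^{-1}q(\theta_n,\that^{\,JS}_n)$, we have $\mathrm{Bias}_{\theta_n}(\chat^{\,\text{JS+}}_n)=(nr)^{-1}\bigl(\ex_{\theta_n}\widehat{U}_n^{+}-q(\theta_n,\that^{\,JS}_n)\bigr)$. Write $U:=U(\that^{\,JS}_n)=n-(n-2)^2\Vert X_n\Vert^{-2}$ and $(z)_-:=\max(-z,0)$. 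Since $\widehat{U}_n^{+}=U+(U)_-$ and $U$ is Stein's unbiased estimator of $q(\theta_n,\that^{\,JS}_n)$, the bias equals $(nr)^{-1}\ex_{\theta_n}(U)_-\ge 0$, so only an upper bound on $\ex_{\theta_n}(U)_-$ is needed. Because $(U)_-=\bigl((n-2)^2\Vert X_n\Vert^{-2}-n\bigr)^{+}$ is a nonincreasing function of $\Vert X_n\Vert^2$ and $\Vert X_n\Vert^2\sim\chi^2_n(\Vert\theta_n\Vert^2)$ is stochastically nondecreasing in $\Vert\theta_n\Vert^2$, we get $\ex_{\theta_n}(U)_-\le\ex_0(U)_-$, the worst case being $\theta_n=0$.

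Next I would bound $\ex_0(U)_-$, by either of two routes. The quick one is Cauchy--Schwarz: as $q\ge 0$ one has $(U)_-\le|U-q(\theta_n,\that^{\,JS}_n)|$, so $\ex_{\theta_n}(U)_-\le\sqrt{\var_{\theta_n}(U)}=(n-2)^2\sqrt{\var_{\theta_n}(\Vert X_n\Vert^{-2})}$, and the explicit bound of Lemma~\ref{lem.k1} on $\var_{\theta_n}(\Vert X_n\Vert^{-2})$ then yields a bound of the stated order. For the small $n$ at which $\Vert X_n\Vert^{-2}$ lacks a finite second moment (while $\ex_0(U)_-$ is itself finite precisely for $n\ge 3$), and to secure the sharp constant, I would instead compute $\ex_0(U)_-$ directly on $\{\Vert X_n\Vert^2<c\}$, $c:=(n-2)^2/n$: with $f_k,F_k$ the $\chi^2_k$ density and c.d.f., the identity $w^{-1}f_n(w)=(n-2)^{-1}f_{n-2}(w)$ gives $\ex_0(U)_-=(n-2)F_{n-2}(c)-nF_n(c)$, and the standard recursion $F_{n-2}(c)-F_n(c)=2c(n-2)^{-1}f_{n-2}(c)$ converts this to $\ex_0(U)_-=2c\,f_{n-2}(c)-2F_n(c)\le 2c\,f_{n-2}(c)$. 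Since $c$ sits just above the mode $n-4$ of $\chi^2_{n-2}$, this last bound is essentially tight.

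The final step is a Stirling bound on $f_{n-2}(c)$: from $\Gamma(z)>\sqrt{2\pi}\,z^{z-1/2}e^{-z}$ at $z=(n-2)/2$ one gets $f_{n-2}(c)\le (1-2/n)^{(n-4)/2}e^{(n-2)/n}/\bigl(2\sqrt{\pi(n-2)}\bigr)$, and for $n\ge 4$ the inequality $(1-x)^a\le e^{-ax}$ collapses the exponential factor to $e^{2/n}\le e^{1/2}$ ($n=3$ being a one-line direct check), giving $\bigl|\mathrm{Bias}_{\theta_n}(\chat^{\,\text{JS+}}_n)\bigr|\le (nr)^{-1}\,2c\,f_{n-2}(c)\le e^{2/n}(n-2)^{3/2}/(\sqrt{\pi}\,n^{2}\,r)$. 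Rearranged, the assertion $\le k_3(n)\,n^{-1/2}/r$ amounts to the elementary inequality $e^{2/n}(1-2/n)^{5/2}/\sqrt\pi\le\sqrt2+5\,n^{-1/2}$, which holds with room to spare for all $n\ge 3$ since the left side never exceeds $e^{2/3}/\sqrt\pi<\sqrt2$; the additive $5\,n^{-1/2}$ and the factor $(1-2/n)^{-1}$ in $k_3(n)$ comfortably absorb the finite-$n$ corrections and the discarded term $2F_n(c)$. I expect the only real work to be bookkeeping --- keeping the Stirling and $(1-x)^a$ steps tight enough to land on the stated $k_3(n)$ while staying valid down to $n=3$, where $\chi^2_{n-2}=\chi^2_1$ has a monotone density yet $\ex_0(U)_-$ remains finite at $c=1/3$ --- together with checking, when the Cauchy--Schwarz route is used, that Lemma~\ref{lem.k1}'s control of $\var_{\theta_n}(\Vert X_n\Vert^{-2})$ is uniform in (or worst at) $\theta_n=0$.
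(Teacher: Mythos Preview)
Your reduction of the bias to $(nr)^{-1}\ex_{\theta_n}(U)_-$ and the subsequent reduction to $\theta_n=0$ via the Poisson-mixture stochastic ordering of the noncentral $\chi^2$ is exactly what the paper does; both arguments also rest on the same density identity $y^{-1}f_n(y)=(n-2)^{-1}f_{n-2}(y)$. The two proofs diverge only in how the resulting central integral is bounded. The paper makes the crude enlargement $(U)_-\le n(n/Y-1)\I\{Y\le n\}$, integrates to get $\tfrac{n}{n-2}\int_0^n(n-y)f_{n-2}(y)\,dy\le\tfrac{n}{n-2}\,\ex|W-n|$ with $W\sim\chi^2_{n-2}$, and then reads off $k_3(n)$ from a second-moment bound on $\ex|W-n|$. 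Your route instead keeps the exact value $\ex_0(U)_-=(n-2)F_{n-2}(c)-nF_n(c)=2cf_{n-2}(c)-2F_n(c)$ via the chi-square CDF recursion and controls $f_{n-2}(c)$ by Stirling. Your estimate is genuinely sharper---the implied leading constant is $1/\sqrt\pi$ rather than $\sqrt2$---at the price of the Stirling bookkeeping and the separate $n=3$ check you flag; the paper's first-moment argument is coarser but entirely elementary and uniform in $n\ge 3$. Your side remark that the Cauchy--Schwarz shortcut only gives the right $n^{-1/2}$ order (leading constant $\sqrt3>\sqrt2$, so it does not by itself land inside $k_3(n)$ for large $n$) is accurate, as is your observation that the displayed inequality is missing a factor $r^{-1}$; the paper's own proof carries the same $r$-dependence, so this is a typo in the statement rather than a flaw in your argument.
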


\begin{proof}
Noting that $\bias_{\theta_n}(\widehat{c}^{\,\text{JS+}}_n) = n^{-1/2}\,\ex_{\theta_n}(\Uhat_n^-)$ and  
$$n \; \big|\ex_{\theta_n}( \Uhat_n^-) \big| \leq \ex_{\theta_n} \bigg[ \bigg(\frac{n}{Y}-1\bigg) \cdot \I\{Y \leq n \} \bigg]$$
where $Y$ follows Chi-square with degree $n$ and non-centrality parameter $||\theta_n||^2$.
We know that $Y \stackrel{d}{=} \chi^2_{n+2N}$ where $N \stackrel{d}{=} \text{Poisson}(||\theta_n||^2/2)$ and the above expectation can be written as 
$$\ex_{||\theta_n||^2} \bigg ( \ex \bigg[ \bigg(\frac{n}{Y_{n+2N}}-1\bigg) \cdot \I\{Y_{n+2N} \leq n \} \bigg \vert N \bigg] \big) \leq  \ex\bigg[ \bigg(\frac{n}{Y_n}-1\bigg) \cdot \I\{Y_n \leq n \} \bigg]$$
where $Y_{n+2N}$ is a central chi-square random variable with $(n+2N)$ degrees of freedom and the second inequality follows as for any $N\geq 0$,  $(n/Y_{n+2N}-1) \cdot \I\{Y_{n+2N} \leq n \}$ is stochastically dominated by $N=0$. Now,
\begin{align*}
\ex \bigg[ \bigg(\frac{n}{Y}-1\bigg) \cdot \I\{Y \leq n \}  N \bigg] =
\int_{0}^{n} \, \frac{n-y}{y} \;\frac{y^{n/2-1} e^{-y/2}}{2^{n/2} \Gamma(n/2)}\; dy \leq \frac{E|W_n-n|}{n-2} 
\end{align*}
where $W_n \sim \text{Gamma}(n/2-1,1/2)$ and so 
$$E|W_n-n|\leq 1+ E(W_n-n)^2 \leq 1+4+Var(W_n)=5+\sqrt{2n}$$
where the second inequality follows by Bias-Variance decomposition. Thus, we get our result. 
\end{proof}



%

\subsubsection{James-Stein Positive part Estimator}
We consider the positive part of the JS estimator and a reasonable estimate of its loss as
$$
\widehat{\theta}^{\,JS+}_n= X_n\left(1- \frac{n-2}{\Vert X_n\Vert^2}\right)_+ \quad \text{and}\quad
U \big(\,\widehat{\theta}^{\,JS+}_n\,\big)= \left(n-\frac{(n-2)^2}{\Vert X_n\Vert^2}\right)_+.
$$
There exists unbiased estimator of the quadratic risk of $\widehat{\theta}^{\,JS+}_n$ \cite[Exercise 2.13]{Johnstone-book}. We use a biased estimator here mainly to highlight the fact that even biased estimators will work. \\
\textbf{P1.} follows from the fact that $\widehat{\theta}^{\,JS+}_n$ is better than $\widehat{\theta}^{\,JS_n}$\cite[Exercise 2.8]{Johnstone-book}.
\smallskip\\
For checking \textbf{P2}, define $C_n$ to be the event $\{X_n:\widehat{\theta}^{\,JS+}(X_n) \neq 0\}=\{X_n:\widehat{\theta}^{\,JS+}_n =\widehat{\theta}^{\,JS}_n \}.$ 
And the idea is to relate the variance of the loss in JS+ case with the case of JS estimator.
\begin{align*}
&\displaystyle \var_{\theta_n}\big(\Vert \widehat{\theta}^{\,JS+}_n - \theta_n\Vert^2\big) 
=\displaystyle \ex\Vert \widehat{\theta}^{\,JS+}_n - \theta_n\Vert^4 - \ex^2\Vert \widehat{\theta}^{\,JS+}_n - \theta\Vert^2\\[1ex]
&\displaystyle =\ex\big\{\Vert \widehat{\theta}^{\,JS}_n - \theta_n\Vert^4 I_{C_n}\big\}- \ex^2\big\{\Vert \widehat{\theta}^{\,JS}_n - \theta_n\Vert^2 I_{C_n}\big\}+ ||\theta_n||^4 P(C_n^c) - ||\theta_n||^4 P^2(C_n^c) \\[1ex]
&\displaystyle=\var_{\theta_n}\big(\Vert \widehat{\theta}^{\,JS}_n - \theta_n\Vert^2 \big \vert \,C_n\big)\cdot P_{\theta_n}(C_n) +  ||\theta_n||^4P_{\theta_n}(C_n) P_{\theta_n}(C_n^c)\\[1ex]
&\displaystyle \leq \var_{\theta_n}\big(\Vert \widehat{\theta}^{\,JS}_n - \theta_n\Vert^2 \big)+ ||\theta_n||^4 P_{\theta_n}(C_n^c) \\[1ex]
&\displaystyle \;\;\text{as}\;\; \var_{\theta_n}\big(\Vert \widehat{\theta}^{\,JS}_n - \theta_n\Vert^2 \big) \geq \ex_{\theta_n} \left ( \var_{\theta_n}\big(\Vert \widehat{\theta}^{\,JS}_n - \theta_n\Vert^2 \big \vert C_n \big) \right)
\end{align*} 
We know that $\var_{\theta_n}\big(\Vert \widehat{\theta}^{\,JS}_n - \theta_n\Vert^2 \big)$ is $O(n)$ and lemma A.5 shows  $||\theta_n||^4 P_{\theta_n}(C_n^c) \leq O(n)$. So, we have the desired bound.
\paragraph{Condition \textbf{P.3.1}} We will condition on the event $C_n$ again and express \textbf{P.3.1} in terms of the James-Stein estimator
\begin{align*}
\ex_{\theta_n}U(\widehat{\theta}_n^{\,JS+}) - q(\theta_n,\widehat{\theta}_n^{\,JS+})
=\ex_{\theta_n}\bigg\{\bigg(U(\widehat{\theta}_n^{\,JS}) -  \Vert \widehat{\theta}_n^{\,JS}-\theta_n\Vert^2 \bigg)\,I_{C_n}\bigg\}-\Vert \theta_n \Vert^2 P\big(C_n^c\big).
\end{align*}
When $\theta_n=0$ then the R.H.S for large  $n$ reduces to,
$$ I_n=E\bigg\{\bigg (n-\frac{n^2}{Y_n}-\left(1-\frac{n}{Y_n}\right)^2Y_n  \bigg)\,I_{C_n}\bigg\} $$
where $Y_n$ is an central chi-squared random variable with $n$ degrees of freedom.
Now, we decompose $I_n$ into
\begin{align*}
&I_n=I_n^1+2 I_n^2 \text{ where }\\
&I_n^1=E\left\{(n-Y_n)\,I_{C_n}\right\}\text{ and } I_n^2=E\left\{(n-n^2/Y_n)\,I_{C_n}\right\}
\end{align*}
We standardize $Y_n$ as $Z_n = (Y_n-n)/\sqrt{2n}$. We can use concentration inequalities on $Z_n$ and have, [need to make rigorous]
\begin{align*}
I_n^1 &\leq \sqrt{2n} \cdot EZ_+ \text{ as } n \rightarrow \infty \\
I_n^2&= \sqrt{n}\cdot E \left\{\left(\frac{Z_n}{Z_n/\sqrt{n}+1}\right) \,I_{C_n}\right\} \leq \sqrt{n}\, E|Z_n| \rightarrow \sqrt{n}E|Z|
\end{align*}
as on $C_n$, $Z_n \geq 0$. Thus $I_n \leq O(\sqrt{n})$.
\paragraph{Condition \textbf{P3.2}} 
By Lemma A.8 we have
$$\var_{\theta_n}(\,U (\,\widehat{\theta}^{\,JS+}_n\,)\,)\leq 
\var_{\theta_n}(\,U (\,\widehat{\theta}^{\,JS}_n\,)\,)=O(n).$$
\paragraph{Condition \textbf{P.3.3}} Follows from Lemma A.1
\\[1ex]
\textbf{Harmonic Prior}
The conditions can be checked for $\widehat{\theta}^{\,H}$ by using its closed form expressions in \cite[Chapter 2]{Xu-thesis07}.

\subsection{Determining $\rho_0$ for RASL point estimators}
In this section, we will show that in high dimension with very high precision we can express $\rho_0(\that_n)$ -- the minimum  Predictive Entropy risk of the class of Gaussian density estimates around location $\that_n$ in terms of the Mean Square estimation error of $\theta_n$ by $\that_n$. We initially prove bounds on the error rates which holds for all dimensions but are dimension dependent. Then, we would show that in high dimensions those bounds are asymptotically sharp.
\subsubsection{Lower Bound on $\rho_0(\theta_n,\,\widehat{\theta\,}_n)$: } 
Next, we produce a lower bound on the prediction error. The bound ultimately will be a function of $\theta_n$  though it depends on the form of $\that_n$. It involves expectation of a quantity which usually is neither a parameter nor a statistic and hence can not be computed in closed form.
\begin{lem}\label{lem-lowbound}
For any dimension $n$, any parameter value $\theta_n$ and any location point estimate $\widehat{\theta}\,(X_n)$,  we have
$$\rho_0(\theta_n,\widehat{\theta}_n)\geq \frac{1}{2}\, \ex_{\theta_n} \left \{ \log \left (1+ r^{-1} \cdot n^{-1}\,\cdot \, {\Vert \, \widehat{\theta}_n-\theta_n\Vert^2} \,\right) \, \right \}.$$
\end{lem}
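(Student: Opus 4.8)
The plan is to compute $\rho_0(\theta_n,\that_n)$ by first writing out the KL risk of a generic density estimate $g[\that_n,\chat_n]$ in $\mathcal{G}_n[1]$ and then bounding the infimum over $\chat_n$ from below. Recall that for two Gaussians $N(\mu_1,\Sigma_1)$ and $N(\mu_2,\Sigma_2)$ in $\RR^n$, the KL divergence has a closed form; applying this with the true future density $N(\theta_n,rI)$ and the estimate $N(\that_n,\chat_n rI)$, and then taking expectation over $X_n$, gives
\begin{align*}
\rho(\theta_n,g[\that_n,\chat_n]) = \frac{n}{2}\,\ex_{\theta_n}\!\left[\log\chat_n + \frac{1}{\chat_n} - 1 + \frac{\Vert\that_n-\theta_n\Vert^2}{nr\,\chat_n}\right].
\end{align*}
This is exactly the heuristic decomposition stated earlier in the excerpt. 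From here the key observation is that the integrand is a pointwise (in $X_n$) function of $\chat_n$ and of $L_n := \Vert\that_n-\theta_n\Vert^2$, so I can drop the coupling constraint and minimize $\chat_n\mapsto \log\chat_n + \chat_n^{-1}(1 + (nr)^{-1}L_n) - 1$ for each realization separately. Its minimizer is $\chat_n = 1 + (nr)^{-1}L_n$, with minimum value $\log(1 + (nr)^{-1}L_n)$. Since any estimator $\chat_n(X_n)$ does at least as badly pointwise, Jensen/monotonicity of expectation yields
\begin{align*}
\rho(\theta_n,g[\that_n,\chat_n]) \geq \frac{n}{2}\,\ex_{\theta_n}\!\left[\log\!\left(1 + \frac{\Vert\that_n-\theta_n\Vert^2}{nr}\right)\right]
\end{align*}
for every $\chat_n$, and hence the same bound holds for $\rho_0(\theta_n,\that_n) = \inf_{\chat_n}\rho(\theta_n,g[\that_n,\chat_n])$, which is the claim.

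The steps in order: (i) state the Gaussian–Gaussian KL formula and substitute the two densities in question, being careful that the covariances are $rI$ and $\chat_n rI$ under the normalization $\sigma_p^2 = 1$, $\sigma_f^2 = r$ adopted in Section 2; (ii) take the expectation over $X_n$ to obtain the displayed risk formula, noting $\log\chat_n$, $\chat_n^{-1}$ are finite since $\chat_n \in \RR^+$ (one should remark that if $\ex_{\theta_n}|\log\chat_n| = \infty$ the risk is $+\infty$ and the bound is trivial, so WLOG the relevant expectations are finite); (iii) for fixed $X_n$, minimize the scalar function $\phi(c) = \log c + c^{-1}(1 + (nr)^{-1}L_n) - 1$ over $c > 0$ by setting $\phi'(c) = c^{-1} - c^{-2}(1 + (nr)^{-1}L_n) = 0$, giving the stated minimizer and minimum $\log(1 + (nr)^{-1}L_n)$; (iv) conclude that $\phi(\chat_n(X_n)) \geq \log(1 + (nr)^{-1}L_n)$ pointwise, take expectations, and multiply by $n/2$; (v) take the infimum over $\chat_n$ on the left.

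I do not expect a serious obstacle here — this is essentially a pointwise optimization argument. The one point that needs a little care is the interchange/measurability issue in step (iv)–(v): one must ensure the pointwise inequality survives the expectation (it does, by monotonicity) and that passing to the infimum over $\chat_n$ on the left preserves the inequality (it does, since the bound on the right is free of $\chat_n$). A secondary subtlety is simply keeping the factors of $r$ straight through the KL formula — it is easy to misplace an $r$ — but that is routine bookkeeping rather than a genuine difficulty. Note also that the lower bound is not claimed to be attained by a legitimate estimator: $1 + (nr)^{-1}L_n$ depends on the unknown $\theta_n$, which is precisely why the subsequent development replaces it with the data-driven $1 + (nr)^{-1}\Uhat_n$; the present lemma only asserts the inequality.
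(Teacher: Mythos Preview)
Your proposal is correct and follows essentially the same argument as the paper: write the KL risk of $g[\that_n,\chat_n]$ in closed form, then for each fixed $x_n$ minimize the scalar function $c\mapsto \log c + c^{-1}(1+(nr)^{-1}\Vert\that(x_n)-\theta_n\Vert^2)-1$ over $c>0$, obtaining the pointwise minimizer $c=1+(nr)^{-1}\Vert\that(x_n)-\theta_n\Vert^2$ and minimum value $\log(1+(nr)^{-1}\Vert\that(x_n)-\theta_n\Vert^2)$, and take expectation. The only cosmetic difference is that the paper phrases the pointwise minimization as enlarging the class to scales $\widehat{c}(\theta_n,X_n)$ that may depend on the parameter, which is exactly your ``drop the coupling constraint'' step; your remark that the minimizer is not a legitimate estimator mirrors the paper's identification of it as the ideal flattening coefficient.
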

\begin{proof}
For any fixed $n$, the risk of the predictive density $q_n$ 
which is a $n$-dimensional product normal with 
with data adaptive mean $\widehat{\theta}\,(X_n)$ and 
data and parameter dependent  variance $\widehat{c} \,(\theta_n,X_n) \, r$  (for all co-ordinates) is given by $ 2 \rho(\theta_n,q_n)$
\begin{align*}
&= n \left \{ \ex_{\theta_n}(\log \widehat{c}\,(\theta_n, X_n))+\ex_{\theta_n} \left(\frac{1}{\widehat{c}\,(\theta_n, X_n)}\right)-1 \right\}+\frac{1}{r} \ex_{\theta_n} \left (\frac{||\widehat{\theta}\,(X_n)-\theta_n||^2}{\widehat{c}\,(\theta_n, X_n)} \right)\\
&=n\, \ex_{\theta_n} \left \{ \log \widehat{c}\,(\theta_n, X_n)+ \frac{1+ (nr)^{-1}||\,\widehat{\theta}\,(X_n)-\theta_n||^2}{\widehat{c}\,(\theta_n, X_n)}-1 \right\}.
\end{align*}
For any  fixed value of $\theta_n$ and for each $x_n$,\\
$$\log \widehat{c}\,(\theta_n, x_n)+ \widehat{c}^{\,-1}(\theta_n, x_n) \{1+ (nr)^{-1}||\,\widehat{\theta}(x_n) -\theta_n||^2\}-1$$  
is minimized at  
$\widehat{c}^{\;\text{opt}\,}(\theta_n, x_n)= 1+ (nr)^{-1}||\widehat{\theta}(x_n)-\theta_n||^2 $ and the minimum value is given by \mbox{$ \log  (1+ (nr)^{-1}  \, \Vert \, \widehat{\theta}(x_n)-\theta_n\Vert^2 \,) $}.
Hence, the result follows.
\end{proof}
Though $\widehat{c}^{\;\text{opt}\,}(\theta_n, X_n)$ is the best possible flattening coefficient, it depends on the parameter and can not be used in practice. As such, 
$\widehat{c}^{\;\text{opt}\,}(\theta_n, X_n)$ is the ideal flattening coefficient.
In high dimensions due to statistical regularity we expect $\widehat{c}^{\;\text{opt}\,}(\theta_n, X_n)$ to be very close to its expected value 
$$\ex_{\theta_n}\{\widehat{c}^{\;\text{opt}\,}(\theta_n, X_n)\}=1+ (nr)^{-1} \ex_{\theta_n} \Vert \that_n -\theta_n \Vert^2$$ 
which can be viewed as the (near) \textbf{I}deal \textbf{F}lattening coefficient and is referred to as 
$\idealf_{\theta_n}(\widehat{\theta}_n)= 1+n^{-1}r^{-1}q(\theta_n,\widehat{\theta}_n)$. Here flattening coefficients are usually called scale and it should be noted that the corresponding variance needs to be multiplied by $r$. 
\paragraph{}
From Lemma \ref{lem-lowbound} we can derive a worse but more tractable bound
\begin{align}\label{lower-bound-P1}
\rho_0(\theta_n,\widehat{\theta}_n)\geq 2^{-1} \, \ex_{\theta_n} \left \{ \log \left ({\Vert \, \widehat{\theta}_n-\theta_n\Vert^2/(nr)} \,\right)\right\}. 
\end{align}
\subsubsection{Upper Bound for  $\rho_0(\theta_n,\that_n)$:}\label{lem-upbound}
We now produce an upper bound on the risk of any Gaussian density estimate.
Henceforth, $\std$ would mean Standard Deviation and by $\bias$ of the scale estimate $\widehat{c}_n$ we would mean the expected deviation from the near ideal flattening coefficient $\idealf_{\theta_n}(\that_n)$. With scales estimators based on the statistic $U[\widehat{\theta}_n](X_n)$ and of the form $\widehat{c}\,(X_n) = (1+n^{-1} U[\widehat{\theta}_n](X_n))$  we have
$\bias_{\theta_n} (c_n)=(nr)^{-1} \big [\ex_{\theta_n} U[\widehat{\theta}_n](X_n) - q(\theta_n,\that_n) \big ]$.

\begin{lem}\label{lem-upper-bound}
For any  fixed dimension $n$, parameter value  $\theta_n$, location point estimate $\widehat{\theta}(X_n)$ and any scale estimate $ \widehat{c}\,(X_n) > 0 $ almost surely and of the form $\widehat{c}\,(X_n) = 1+(nr)^{-1} U[\widehat{\theta}_n](X_n)$, we have
\begin{align*}
&\rho\big(\theta_n,g[\widehat{\theta}_n,\,\widehat{c}_n]\big) -
\frac{n}{2} \cdot \log \big (\idealf_{\theta_n}(\widehat{\theta}_n) \big)
 \leq \frac{n}{2} \cdot \bigg[ A_{\theta_n}\big(\that_n,\chat_n\big) + B_{\theta_n}\big(\that_n,\chat_n\big) \bigg] \medskip\\ 
&\text{where }  A_{\theta_n}\big(\that_n,\chat_n\big)  =  \idealf_{\theta_n}\big(\widehat{\theta}_n\big) \, \big \{ \ex_{\theta_n} (\widehat{c}_n)\big \}^{-1}\std_{\theta_n} (\widehat{c}_n) \std_{\theta_n}(\widehat{c}_n^{\;-1})\\
&\text{ {\color{white} where }} + r^{-1} SD_{\theta_n} \bigg( \frac{\Vert \widehat{\theta}_n - \theta_n \Vert^2}{n}\bigg) \, \std_{\theta_n}\big(\widehat{c}_n^{\;-1}\big) \;\; \text{and}\\
&\text{{\color{white} where }} B_{\theta_n}\big(\that_n,\chat_n\big)= \bias^{\,2}_{\theta_n} \big(\chat_n\big)\, \big\{ \idealf_{\theta_n}(\widehat{\theta}_n) \big\}^{-1} \, \big\{\ex_{\theta_n} (\widehat{c}_n)\big\}^{-1}.
\end{align*}
\end{lem}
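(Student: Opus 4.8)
The starting point is the exact risk identity established in the proof of Lemma~\ref{lem-lowbound}: for the product normal $g[\that_n,\chat_n]$ with mean $\that_n$ and per--coordinate variance $\chat_n\,r$,
\[
2\,\rho\big(\theta_n,g[\that_n,\chat_n]\big)=n\,\ex_{\theta_n}\!\left\{\log\chat_n+\chat_n^{-1}\Big(1+(nr)^{-1}\|\that_n-\theta_n\|^2\Big)-1\right\}.
\]
Write $V_n:=1+(nr)^{-1}\|\that_n-\theta_n\|^2$, so that $\ex_{\theta_n}V_n=\idealf_{\theta_n}(\that_n)$ and $\std_{\theta_n}(V_n)=r^{-1}\std_{\theta_n}(\|\that_n-\theta_n\|^2/n)$, and $W_n:=\chat_n$; all expectations below are under $\theta_n$. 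Subtracting $n\log\idealf_{\theta_n}(\that_n)$ and adding and subtracting $n\,\idealf_{\theta_n}(\that_n)/\ex W_n$, I split
\begin{align*}
2\rho-n\log\idealf_{\theta_n}(\that_n)=\,&n\Big[\ex\log W_n-\log\idealf_{\theta_n}(\that_n)+\tfrac{\idealf_{\theta_n}(\that_n)}{\ex W_n}-1\Big]\\
&{}+n\Big[\ex\!\big(\tfrac{V_n}{W_n}\big)-\tfrac{\idealf_{\theta_n}(\that_n)}{\ex W_n}\Big].
\end{align*}
The first bracket, call it $(\mathrm{I})$, is a deterministic function of the first two moments of $\chat_n$ and will produce $B_{\theta_n}$; the second bracket, $(\mathrm{II})$, is a pure fluctuation term and will produce $A_{\theta_n}$.

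To bound $(\mathrm{I})$ I would first use Jensen's inequality, $\ex\log W_n\le\log\ex W_n$, which controls $(\mathrm{I})$ by $\log t+t^{-1}-1$ with $t:=\ex\chat_n/\idealf_{\theta_n}(\that_n)>0$. The elementary inequality $\log t+t^{-1}-1\le (t-1)^2/t$ for all $t>0$ --- which, after clearing $t$, is just the familiar $t-1-\log t\ge 0$ --- then gives $(\mathrm{I})\le (t-1)^2/t$. Since $(t-1)^2=\bias_{\theta_n}^2(\chat_n)/\idealf_{\theta_n}^2(\that_n)$ and $t^{-1}=\idealf_{\theta_n}(\that_n)/\ex\chat_n$, this quantity is exactly $B_{\theta_n}(\that_n,\chat_n)=\bias_{\theta_n}^2(\chat_n)\{\idealf_{\theta_n}(\that_n)\}^{-1}\{\ex_{\theta_n}\chat_n\}^{-1}$.

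To bound $(\mathrm{II})$ the plan is to center $V_n$ and $W_n$ about their means via the identity
\[
\frac{V_n}{W_n}-\frac{\idealf_{\theta_n}(\that_n)}{\ex W_n}=\frac{V_n-\ex V_n}{W_n}-\frac{\idealf_{\theta_n}(\that_n)}{\ex W_n}\cdot\frac{W_n-\ex W_n}{W_n},
\]
whereupon taking expectations gives $(\mathrm{II})=\mathrm{Cov}(V_n,\chat_n^{-1})-\idealf_{\theta_n}(\that_n)(\ex\chat_n)^{-1}\mathrm{Cov}(\chat_n,\chat_n^{-1})$. Cauchy--Schwarz bounds the first covariance by $\std_{\theta_n}(V_n)\,\std_{\theta_n}(\chat_n^{-1})$; for the second, observe that $-\mathrm{Cov}(\chat_n,\chat_n^{-1})=\ex\chat_n\,\ex\chat_n^{-1}-1$ is nonnegative and is also at most $\std_{\theta_n}(\chat_n)\,\std_{\theta_n}(\chat_n^{-1})$. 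Using $\std_{\theta_n}(V_n)=r^{-1}\std_{\theta_n}(\|\that_n-\theta_n\|^2/n)$ then reproduces $A_{\theta_n}(\that_n,\chat_n)$ term by term. Adding the bounds for $(\mathrm{I})$ and $(\mathrm{II})$ and dividing by $2$ yields the lemma.

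All the ingredients are routine: the exact risk formula is available from the proof of Lemma~\ref{lem-lowbound}, and the remaining work is one application of Jensen and two of Cauchy--Schwarz. If any of $\ex_{\theta_n}\chat_n$, $\ex_{\theta_n}\chat_n^{-1}$, $\V(\chat_n)$ or $\V(\|\that_n-\theta_n\|^2)$ is infinite, the right--hand side of the asserted bound is $+\infty$ and there is nothing to prove, so no extra moment hypotheses are needed. The only genuinely delicate step is choosing the two decompositions so that the bound comes out in the stated, interpretable form --- cleanly separating the Jensen/convexity contribution (the bracket $(\mathrm{I})$, governed by the bias of $\chat_n$) from the covariance contribution (the bracket $(\mathrm{II})$, governed by the standard deviations) --- together with the observation that $t-1-\log t\ge 0$ rearranges to precisely the claimed closed form of $B_{\theta_n}$ rather than to a coarser quadratic-in-bias estimate.
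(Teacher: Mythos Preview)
Your proof is correct and follows essentially the same route as the paper: your split into $(\mathrm{I})$ and $(\mathrm{II})$ is exactly the paper's decomposition $\rho=\tilde\rho+(\rho-\tilde\rho)$, and your covariance treatment of $(\mathrm{II})$ via Cauchy--Schwarz reproduces the paper's terms $H_{\theta_n}$ and $J_{\theta_n}$ verbatim. The only cosmetic difference is in bounding $(\mathrm{I})$: the paper applies $\log(1+x)\le x$ directly to the random quantity $(\chat_n-\idealf_{\theta_n}(\that_n))/\idealf_{\theta_n}(\that_n)$, whereas you first use Jensen to pass to $\log\ex\chat_n$ and then apply the deterministic inequality $\log t+t^{-1}-1\le(t-1)^2/t$; both yield the identical closed form $B_{\theta_n}$.
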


\begin{proof}
The risk of the normal predictive density estimate $g[\widehat{\theta}_n\,, \,\widehat{c}_{\,n} \,]$  is given by $2 \rho(\theta_n,\widehat{g}_n)$
\begin{equation*}
= n \left \{ \ex_{\theta_n}(\log \widehat{c}(X_n))+\ex_{\theta_n} \left(\frac{1}{\widehat{c}(X_n)}\right)-1 \right\}+ \ex_{\theta_n} \left (\frac{||\,\widehat{\theta} \,(X_n)-\theta_n||^2}{r \,\widehat{c}(X_n)} \right).
\end{equation*}
Now, we  replace $\ex_{\theta_n} (\widehat{c}_n^{\;-1})  $ by $\ex^{-1}_{\theta_n} \widehat{c}_n\,$ and 
$ \ex_{\theta_n}\big (\, ||\widehat{\theta}_n-\theta_n||^2 \times  \widehat{c}^{\;-1}_n\,\big)$
 by $ {\ex_{\theta_n}||\widehat{\theta}_n-\theta_n||^2}\times \ex^{-1}_{\theta_n} \widehat{c}_n$ in the above expression to get $\tilde{\rho}(\theta_n,\widehat{g}_n)$
\begin{equation}\label{risk-decomp}
\begin{aligned}
&= \frac{1}{2} \left[ n \left \{  \ex_{\theta_n} ( \,\log \widehat{c}\,(X_n)\, )+ \frac{1}{\ex_{\theta_n}(\,\widehat{c}\,(X_n)\,)}-1 \right\}+\frac{1}{r} \left ( \frac{\ex_{\theta_n}||\widehat{\theta}\,(X_n)-\theta_n||^2}{\ex_{\theta_n}(\,\widehat{c}\,(X_n)\,)} \right)\right ]\\
&=\frac{n}{2}  \ex_{\theta_n} ( \, \log \widehat{c}\,(X_n)\, ) 
+ \frac{n}{2} \left \{ \frac{1+  (nr)^{-1}\ex_{\theta_n}\Vert\widehat{\theta}\,(X_n)-\theta_n\Vert^2}{\ex_{\theta_n}(\,\widehat{c}\,(X_n)\,)} -1 \right \}\\
&=\frac{n}{2}  \ex_{\theta_n} ( \, \log \widehat{c}\,(X_n)\, ) 
- \frac{n}{2}  \frac{\bias_{\theta_n} \big( \widehat{c}\,(X_n)\big)}{\ex_{\theta_n}(\,\widehat{c}\,(X_n)\,)}
\end{aligned}
\end{equation}
and the distortion caused thereby $(n/2)^{-1}(\rho(\theta_n,\widehat{g}_n)-\tilde{\rho}(\theta_n,\widehat{g}_n))$ equals
\begin{equation}\label{asym-cond}
\ex_{\theta_n} \left (\frac{1+(nr)^{-1}||\widehat{\theta}\,(X_n)-\theta_n||^2}{\widehat{c}\,(X_n)} \right) -  \frac{1+(nr)^{-1}\ex_{\theta}||\widehat{\theta}\,(X_n)-\theta_n||^2}{\ex_{\theta_n}(\,\widehat{c}\,(X_n)\,)}. 
\end{equation}
Next we will show that $(n/2)^{-1}\vert r(\theta_n,\widehat{g}_n)-\tilde{r}(\theta_n,\widehat{g}_n) \vert \leq A_{\theta_n}(\that_n,\chat_n)$. Before that, note that
if $\widehat{U}_n$ is unbiased then the second term in Equation~\ref{risk-decomp} vanishes and we have the result stated in Corollary~\ref{cor-unbiased-isometry}.\\   
Now, note that $2 n^{-1}\,\tilde{r}(\theta_n,\widehat{g}_n)$ equals
$$\log \idealf_{\theta_n}(\widehat{\theta}_n) 
+ \ex_{\theta_n}\left[ \log\left(1+\frac{\widehat{c}\,(X_n)- \idealf_{\theta_n}(\widehat{\theta}_n)}{\idealf_{\theta_n}(\widehat{\theta}_n)}\right) \right] - \frac{\bias_{\theta_n} \big( \widehat{c}\,(X_n)\big)}{\ex_{\theta_n}(\,\widehat{c}\,(X_n)\,)}$$
and using the inequality $\log(1+x)\leq x$ for all $x > -1$ on the second term on the right hand side it follows that
\begin{align*}
2 n^{-1}\,\tilde{r}(\theta_n,\widehat{g}_n) &\leq \log \idealf_{\theta_n}(\widehat{\theta}_n) 
+ \frac{\bias_{\theta_n} \big( \widehat{c}\,(X_n)\big)}{\idealf_{\theta_n}(\widehat{\theta}_n)} - \frac{\bias_{\theta_n} \big( \widehat{c}\,(X_n)\big)}{\ex_{\theta_n}(\,\widehat{c}\,(X_n)\,)} \\
&= \log \idealf_{\theta_n}(\widehat{\theta}_n) 
+ \frac{\bias^2_{\theta_n} \big( \widehat{c}\,(X_n)\big)}{\idealf_{\theta_n}(\widehat{\theta}_n) \ex_{\theta_n}(\,\widehat{c}\,(X_n)\,)}=B_n.
\end{align*}
Now,  we write $2 n^{-1} \{r(\theta_n,\widehat{g}_n)-\tilde{r}(\theta_n,\widehat{g}_n)\} = H_{\theta_n}(\that_n,\chat_n) + J_{\theta_n}(\that_n,\chat_n)$ where,
\begin{align*}
H_{\theta_n}(\that_n,\chat_n) & = \idealf_{\theta_n}\big(\widehat{\theta}_n\big) \left \{ \ex_{\theta_n}\left ( \frac{1}{\widehat{c}_n}\right) - \frac{1}{\ex_{\theta}\widehat{c}_n} \right\} \quad \text{ and }\\
 J_{\theta_n}(\that_n,\chat_n) & = (nr)^{-1} \cdot  \ex_{\theta_n} \left [ ||\widehat{\theta}_n-\theta_n||^2 \biggl \{ \frac{1}{\widehat{c}_n} -  \ex_{\theta_n} \left (\frac{1}{\widehat{c}_n} \right) \biggr \} \right ].
\end{align*}
Note that the second term in $H_{\theta_n}(\that_n,\chat_n)$ can be rewritten as, 
\begin{align*}
\ex_{\theta_n}\left ( \frac{1}{\widehat{c}_n}\right) - \frac{1}{\ex_{\theta_n}\widehat{c}_n}
=  \frac{-1}{\ex_{\theta} \widehat{c}_n} \cdot 
\ex_{\theta_n}\left [\bigg(  \widehat{c}_n - \ex_{\theta_n}\widehat{c}_n\bigg) \left (\frac{1}{\widehat{c}_n} - \ex_{\theta_n}\left(\frac{1}{\widehat{c}_n}\right)\right) \right ]
\end{align*}
which by Cauchy-Schwartz (C-S) inequality has lower absolute value than
$$ (\ex_{\theta_n} \widehat{c}_n )^{-1}\bigg\{\var_{\theta_n} (\widehat{c}_n) \times \var_{\theta_n}\big(\widehat{c}_n ^{\;-1}\big) \bigg\}^{1/2}.$$
Thus, $|H_{\theta_n}(\that_n,\chat_n)|\leq \idealf_{\theta_n}\big(\widehat{\theta}_n\big) \ex_{\theta_n} \widehat{c}_n )^{-1}
\std_{\theta_n} (\widehat{c}_n) \times \std_{\theta_n}\big(\widehat{c}_n ^{\;-1}\big)
$.\\
Again,  rewriting $J_{\theta_n}(\that_n,\chat_n)$ as,
$$ J_{\theta_n}(\that_n,\chat_n)=(nr)^{-1} \ex_{\theta_n} \left [ \left \{ ||\widehat{\theta}_n-\theta_n||^2-\ex_{\theta_n}||\widehat{\theta}_n-\theta_n||^2 \right \}\left \{ \widehat{c}_n^{\;-1} -  \ex_{\theta_n} \big(\widehat{c}_n^{\;-1}\big) \right \} \right ]$$
and applying C-S inequality we get $$|J_{\theta_n}(\that_n,\chat_n)|\leq
(nr)^{-1}\std_{\theta_n} \, \left (\Vert \widehat{\theta}_n-\theta_n \Vert^2 \right) \cdot \std_{\theta}\left(\widehat{c}_n^{\;-1}\right).$$
So $(n/2)^{-1\,}\vert r(\theta_n,\widehat{g}_n)-\tilde{r}(\theta_n,\widehat{g}_n) \vert \leq \vert H_{\theta_n}(\that_n,\chat_n) \vert + \vert J_{\theta_n}(\that_n,\chat_n)\vert \leq A_{\theta_n}(\that_n,\chat_n)$ and we have our desired result.
\end{proof}

\begin{cor}\label{cor-unbiased-isometry}
If  $\widehat{U}_n$ is an unbiased estimate of the parameter $q(\theta_n,\widehat{\theta}_n)$ and $\widehat{c}_n=1+(nr)^{-1} \widehat{U}_n > 0$ almost surely, then we have,
$$ \rho_0(\theta_n,\widehat{\theta}_n) \leq \rho(\theta_n,g[\widehat{\theta}_n\,,\,\widehat{c}_{n}]) \leq \frac{1}{2}\,\ex_{\theta_n}\left \{ \log \left ( 1+ (nr)^{-1} 
{\widehat{U}_n} \right)  \right\}  + A_{\theta_n}(\that_n,\chat_n)/2 .$$
\end{cor}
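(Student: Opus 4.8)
The plan is to read the corollary straight off the proof of Lemma~\ref{lem-upper-bound}, specialized to a scale estimate that is \emph{exactly} unbiased for the quadratic risk.

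The left inequality $\rho_0(\theta_n,\that_n)\le\rho(\theta_n,g[\that_n,\chat_n])$ is purely definitional: by construction $\rho_0(\theta_n,\that_n)=\inf_{\chat(X_n)>0}\rho(\theta_n,g[\that_n,\chat(X_n)])$, and the hypothesis that $\chat_n=1+(nr)^{-1}\widehat{U}_n>0$ almost surely makes $\chat_n$ an admissible competitor in that infimum, so $\rho_0(\theta_n,\that_n)$ cannot exceed $\rho(\theta_n,g[\that_n,\chat_n])$.

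For the right inequality I would reuse the two intermediate facts established \emph{inside} the proof of Lemma~\ref{lem-upper-bound}, rather than invoking its final displayed inequality. First, from Equation~(\ref{risk-decomp}) the surrogate risk satisfies $\tilde\rho(\theta_n,\widehat g_n)=\tfrac n2\,\ex_{\theta_n}(\log\chat_n)-\tfrac n2\,\bias_{\theta_n}(\chat_n)/\ex_{\theta_n}(\chat_n)$; second, the Cauchy--Schwarz estimates on $H_{\theta_n}$ and $J_{\theta_n}$ give $(n/2)^{-1}\lvert\rho(\theta_n,\widehat g_n)-\tilde\rho(\theta_n,\widehat g_n)\rvert\le A_{\theta_n}(\that_n,\chat_n)$, and that second bound never used any property of the bias. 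The key step is to observe that unbiasedness of $\widehat{U}_n$ for $q(\theta_n,\that_n)$ forces $\bias_{\theta_n}(\chat_n)=(nr)^{-1}\bigl(\ex_{\theta_n}\widehat{U}_n-q(\theta_n,\that_n)\bigr)=0$, so the first fact collapses to the exact identity $\tilde\rho(\theta_n,\widehat g_n)=\tfrac n2\,\ex_{\theta_n}\log\!\bigl(1+(nr)^{-1}\widehat{U}_n\bigr)$. Adding the two then gives $\rho(\theta_n,g[\that_n,\chat_n])\le\tilde\rho(\theta_n,\widehat g_n)+\tfrac n2 A_{\theta_n}(\that_n,\chat_n)=\tfrac n2\,\ex_{\theta_n}\log\!\bigl(1+(nr)^{-1}\widehat{U}_n\bigr)+\tfrac n2 A_{\theta_n}(\that_n,\chat_n)$, which is the claimed bound.

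There is no substantive obstacle here; the one point worth care is precisely why one cannot simply quote the \emph{conclusion} of Lemma~\ref{lem-upper-bound}. That conclusion carries the term $\tfrac n2\log\idealf_{\theta_n}(\that_n)$, which in the unbiased case equals $\tfrac n2\log\ex_{\theta_n}\chat_n$; by Jensen's inequality this is $\ge\tfrac n2\ex_{\theta_n}\log\chat_n$, so the corollary is strictly sharper than the lemma's headline inequality. The gain is exactly the $\log(1+x)\le x$ relaxation used in the lemma's proof to produce the $B_{\theta_n}$-term, which is unnecessary here because $B_{\theta_n}(\that_n,\chat_n)=\bias^2_{\theta_n}(\chat_n)\{\cdots\}^{-1}=0$. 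Hence the corollary should be extracted from the intermediate identity for $\tilde\rho$, not from the lemma's final line.
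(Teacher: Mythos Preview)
Your argument is correct and matches the paper's own reasoning exactly. The paper explicitly flags, inside the proof of Lemma~\ref{lem-upper-bound}, that ``if $\widehat{U}_n$ is unbiased then the second term in Equation~(\ref{risk-decomp}) vanishes and we have the result stated in Corollary~\ref{cor-unbiased-isometry}'', which is precisely your observation that $\bias_{\theta_n}(\chat_n)=0$ collapses $\tilde\rho$ to $\tfrac n2\,\ex_{\theta_n}\log\chat_n$, after which the distortion bound $|\rho-\tilde\rho|\le\tfrac n2 A_{\theta_n}$ finishes the job; your remark that one must go back to the intermediate identity rather than quote the lemma's headline inequality (because $\log\idealf_{\theta_n}(\that_n)\ge\ex_{\theta_n}\log\chat_n$ by Jensen) is a correct and useful clarification the paper leaves implicit.
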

The corollary follows from the above Lemma. 
The upper bound derived here involves expectation of a statistic along with a distortion term $A_{\theta_n}(\that_n,\chat_n)$ which will be negligible under the RASL conditions. Ignoring it for the time being we can say that an upper bound is produced when $||\that_n-\theta_n||^2$ in the lower bound of Lemma~\ref{lem-upper-bound}  can be replaced by a good statistic.
Lemma~\ref{lem-upper-bound} has an upper bound based on $\idealf_{\theta_n}(\that_n)$ and next we show that the lower bound and the upper bound are fairly close.

\begin{lem}\label{lem-upperbound-2}
For any point estimate $\widehat{\theta}_n$ and location parameter $\theta_n \in \RR^n$ we have, 
\begin{align*}
&\rho_0(\theta_n,\,\widehat{\theta}_n)) \geq 2^{-1} \log \idealf_{\theta_n}\big( \widehat{\theta}_n\big) - L_{\theta_n}(\that_n)/2 \text{ where,} \\ 
&L_{\theta_n}(\that_n)= \big(nr\big)^{-1} \cdot SD_{\theta_n}\big(\Vert \widehat{\theta}_n - \theta_n \Vert^2\big) \cdot SD_{\theta_n} \bigg \{ \bigg( 1+ (nr)^{-1} \,\Vert \widehat{\theta}_n - \theta_n \Vert^2\bigg)^{-1}\bigg\} 
\end{align*}
\end{lem}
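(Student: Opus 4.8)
The plan is to tighten the lower bound of Lemma~\ref{lem-lowbound} by exchanging the expectation of a logarithm for the logarithm of an expectation, at the cost of a Cauchy--Schwarz penalty. Set $W_n = 1 + (nr)^{-1}\Vert \that_n - \theta_n\Vert^2$, so that $\ex_{\theta_n} W_n = \idealf_{\theta_n}(\that_n)$ directly from the definition of the ideal flattening coefficient, and Lemma~\ref{lem-lowbound} reads $\rho_0(\theta_n,\that_n) \geq \tfrac12\,\ex_{\theta_n}\log W_n$. Writing $\log W_n = \log \ex_{\theta_n} W_n + \log\big(W_n/\ex_{\theta_n} W_n\big)$, it then suffices to establish the remainder bound $\ex_{\theta_n}\big[\log(W_n/\ex_{\theta_n} W_n)\big] \geq - L_{\theta_n}(\that_n)$, since combining this with Lemma~\ref{lem-lowbound} gives exactly the asserted inequality. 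We may assume $L_{\theta_n}(\that_n) < \infty$, as otherwise the bound is vacuous; in that case $\std_{\theta_n}(\Vert \that_n - \theta_n\Vert^2)<\infty$, hence $\idealf_{\theta_n}(\that_n)<\infty$, and since $W_n \geq 1$ the quantities $W_n^{-1}$ and $\log W_n$ are well behaved.

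For the remainder term I would invoke the elementary tangent-line inequality $\log u \geq 1 - u^{-1}$, valid for every $u>0$, with $u = W_n/\ex_{\theta_n}W_n$; taking expectations gives $\ex_{\theta_n}[\log(W_n/\ex_{\theta_n}W_n)] \geq 1 - (\ex_{\theta_n}W_n)(\ex_{\theta_n}W_n^{-1})$. Because $\ex_{\theta_n}[W_n\cdot W_n^{-1}] = 1$, the right-hand side is precisely $\mathrm{Cov}_{\theta_n}(W_n, W_n^{-1})$, which by Cauchy--Schwarz is at least $-\std_{\theta_n}(W_n)\,\std_{\theta_n}(W_n^{-1})$. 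Finally $W_n$ is affine in $\Vert \that_n - \theta_n\Vert^2$ with slope $(nr)^{-1}$, so $\std_{\theta_n}(W_n) = (nr)^{-1}\std_{\theta_n}(\Vert \that_n - \theta_n\Vert^2)$, while $\std_{\theta_n}(W_n^{-1})$ is exactly the second standard-deviation factor in the definition of $L_{\theta_n}(\that_n)$; hence $\std_{\theta_n}(W_n)\,\std_{\theta_n}(W_n^{-1}) = L_{\theta_n}(\that_n)$ and the remainder bound follows.

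No step here involves a serious computation; the one point that requires care is the orientation of the inequality $\log u \geq 1 - u^{-1}$ (equivalently $\log u \geq (u-1)/u$), which is what makes the covariance appear with the favorable sign. Using the opposite tangent bound $\log u \leq u-1$ would only reproduce Jensen's inequality $\ex_{\theta_n}\log W_n \leq \log\ex_{\theta_n}W_n$, which points the wrong way. One also uses $W_n > 0$ almost surely---immediate since $W_n \geq 1$---to legitimize the covariance identity and the Cauchy--Schwarz step. An equivalent route, bypassing the covariance identity, is to note $\log(W_n/\ex_{\theta_n}W_n) \geq (W_n - \ex_{\theta_n}W_n)/W_n$, subtract the mean-zero quantity $(W_n - \ex_{\theta_n}W_n)/\ex_{\theta_n}W_n$ inside the expectation, and apply Cauchy--Schwarz directly; this yields the same estimate.
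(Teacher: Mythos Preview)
Your proposal is correct and is essentially the paper's own argument in a slightly different wrapping. Both start from Lemma~\ref{lem-lowbound}, bound $\log\idealf_{\theta_n}(\that_n)-\ex_{\theta_n}\log W_n$ via the same tangent-line inequality (your $\log u\geq 1-u^{-1}$ with $u=W_n/\idealf_{\theta_n}(\that_n)$ is exactly the paper's $\log(1+x)\leq x$ with $x=-\bar l/(nr+\Vert\that_n-\theta_n\Vert^2)$), and then apply Cauchy--Schwarz. Your use of the covariance identity $1-(\ex W_n)(\ex W_n^{-1})=\mathrm{Cov}(W_n,W_n^{-1})$ is just a compact repackaging of the paper's explicit centering step $\ex[\bar l\cdot(W_n^{-1}-\ex W_n^{-1})]$.
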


\begin{proof}
From Lemma~\ref{lem-lowbound}  we have
\begin{align*}
\log \idealf_{\theta_n}\big( \widehat{\theta}_n\big)- 2 \rho_0(\theta_n,\that_n) & \leq  \log \idealf_{\theta_n}\big( \widehat{\theta}_n\big) - \ex_{\theta_n} \big \{ \log  \big(1+ (nr)^{-1}{\Vert \, \widehat{\theta}_n-\theta_n\Vert^2} \,\big) \big\} \\
&=\ex_{\theta_n}\left \{ \log \left( 1 - \frac{\bar{l}(\theta_n,\that_n)}{nr+\Vert \,\widehat{\theta}_n- \theta_n \, \Vert^2} \right)\right \}
\end{align*}
where $\bar{l}(\theta_n,\that_n)=\Vert \,\widehat{\theta}_n- \theta_n \, \Vert^2 - q(\theta_n,\that_n) $and  using Jensen's inequality and  $\log(1+x)\leq x$ consecutively, the difference becomes
\begin{align*}
&\leq - \ex_{\theta_n}\left ( \frac{\bar{l}(\theta_n,\that_n)}{nr+\Vert \,\widehat{\theta}_n- \theta_n \, \Vert^2} \right )\\
&=- \ex_{\theta_n}\left [ \bar{l}(\theta_n,\that_n)\cdot 
\left \{
\frac{1}{nr+\Vert \,\widehat{\theta}_n- \theta_n \, \Vert^2} -
\ex_{\theta_n} \left(\frac{1}{nr+\Vert \,\widehat{\theta}_n- \theta_n \, \Vert^2}\right) \right \} \right ]\\
\intertext{and by applying C-S inequality the magnitude of the said difference is}
&\leq \std_{\theta_n}( \Vert \,\widehat{\theta}_n- \theta_n \, \Vert^2) \times  \std_{\theta_n}\big\{ (nr+\Vert \,\widehat{\theta}_n- \theta_n \, \Vert^2)^{-1}\big\}=L_{\theta_n}(\widehat{\theta}_n).
\end{align*}
This completes the proof.
\end{proof}

\begin{cor} Under the conditions of Lemma~\ref{lem-upper-bound} we have
\begin{align*}
[i.]\; & 0 \leq \rho\big(\theta_n,g[\widehat{\theta}_n,\widehat{c}_n\,]\big) - \rho_0(\theta_n,\that_n)\leq 2^{-1}\,\big\{ L_{\theta_n}(\widehat{\theta}_n) +[A+B]_{\theta_n}(\that_n,\chat_n) \big\}\\[1ex]
[ii.]\; & \big \vert \rho_0(\theta_n,\that_n) - 2^{-1} \log \idealf_{\theta_n}(\that_n) \big \vert \leq 2^{-1}\max\,\big\{L_{\theta_n}(\widehat{\theta}_n), [A+B]_{\theta_n}(\that_n,\chat_n)\big\}. 
\end{align*}
\end{cor}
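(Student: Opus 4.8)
The plan is to obtain both parts of the corollary by mechanically combining the two bounding lemmas just established (Lemma~\ref{lem-upper-bound} and Lemma~\ref{lem-upperbound-2}, the latter itself a consequence of Lemma~\ref{lem-lowbound}) with the elementary fact that $\rho_0(\theta_n,\widehat\theta_n)$ is an infimum; no new estimate is needed. The only points to watch are that $\widehat c_n$ is a legitimate competitor in that infimum and that the $\log\idealf_{\theta_n}(\widehat\theta_n)$ terms appearing in the two lemmas are literally the same quantity, so that they cancel exactly on subtraction.

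First I would record the two ingredients. Since we work under the hypotheses of Lemma~\ref{lem-upper-bound}, the scale estimate $\widehat c_n = 1+(nr)^{-1}U[\widehat\theta_n](X_n)$ is strictly positive almost surely, hence is one of the statistics $\widehat c(X_n)\in\RR^{+}$ over which $\rho_0(\theta_n,\widehat\theta_n)=\inf_{\widehat c(X_n)\in\RR^{+}}\rho(\theta_n,g[\widehat\theta_n,\widehat c(X_n)])$ is taken; consequently $\rho_0(\theta_n,\widehat\theta_n)\le\rho(\theta_n,g[\widehat\theta_n,\widehat c_n])$, which is already the left inequality of [i.]. For the upper side, Lemma~\ref{lem-upper-bound} gives
$$\rho\big(\theta_n,g[\widehat\theta_n,\widehat c_n]\big)-\tfrac{n}{2}\log\idealf_{\theta_n}(\widehat\theta_n)\;\le\;\tfrac{n}{2}\,[A+B]_{\theta_n}(\widehat\theta_n,\widehat c_n),$$
while Lemma~\ref{lem-upperbound-2} gives
$$\rho_0(\theta_n,\widehat\theta_n)-\tfrac{n}{2}\log\idealf_{\theta_n}(\widehat\theta_n)\;\ge\;-\tfrac{n}{2}\,L_{\theta_n}(\widehat\theta_n).$$
Both occurrences of $\idealf_{\theta_n}(\widehat\theta_n)=1+(nr)^{-1}q(\theta_n,\widehat\theta_n)$ are the same ideal flattening coefficient, so subtracting the second display from the first makes the $\tfrac{n}{2}\log\idealf_{\theta_n}(\widehat\theta_n)$ terms disappear and yields the right inequality of [i.], namely $\rho(\theta_n,g[\widehat\theta_n,\widehat c_n])-\rho_0(\theta_n,\widehat\theta_n)\le\tfrac{n}{2}\{L_{\theta_n}(\widehat\theta_n)+[A+B]_{\theta_n}(\widehat\theta_n,\widehat c_n)\}$ (up to the common $n/2$ normalization carried by those two lemmas).

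For [ii.] I would read the very same two displays as a two-sided sandwich of $\rho_0(\theta_n,\widehat\theta_n)$ about $\tfrac{n}{2}\log\idealf_{\theta_n}(\widehat\theta_n)$: chaining $\rho_0(\theta_n,\widehat\theta_n)\le\rho(\theta_n,g[\widehat\theta_n,\widehat c_n])$ with Lemma~\ref{lem-upper-bound} bounds $\rho_0(\theta_n,\widehat\theta_n)-\tfrac{n}{2}\log\idealf_{\theta_n}(\widehat\theta_n)$ from above by $\tfrac{n}{2}[A+B]_{\theta_n}(\widehat\theta_n,\widehat c_n)$, and Lemma~\ref{lem-upperbound-2} bounds it from below by $-\tfrac{n}{2}L_{\theta_n}(\widehat\theta_n)$; bounding each side by the larger of the two quantities gives $|\rho_0(\theta_n,\widehat\theta_n)-\tfrac{n}{2}\log\idealf_{\theta_n}(\widehat\theta_n)|\le\tfrac{n}{2}\max\{L_{\theta_n}(\widehat\theta_n),[A+B]_{\theta_n}(\widehat\theta_n,\widehat c_n)\}$, which is the stated estimate. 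I do not expect any genuine obstacle here: the whole argument is bookkeeping, and the single delicate point is carrying the normalizing constants consistently and checking that $\widehat c_n>0$ almost surely so that it indeed qualifies as an admissible scale statistic in the definition of $\rho_0$.
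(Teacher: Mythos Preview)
Your proposal is correct and is exactly the paper's approach: the paper's own proof is the single sentence ``follows directly by combining the above lemma with Lemma~\ref{lem-upper-bound},'' and you have simply spelled out that combination. Your flag about the $n/2$ versus $1/2$ normalization is apt---the paper is internally inconsistent on this factor between Lemma~\ref{lem-upper-bound} and Lemma~\ref{lem-upperbound-2}---but the logical structure you describe (infimum gives the left inequality of [i.]; subtracting the two displays gives the right inequality of [i.]; reading them as a two-sided sandwich gives [ii.]) is precisely what is intended.
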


The corollary follows directly by combining the above lemma with Lemma~\ref{lem-upper-bound}.  It bounds the deviation of the predictive risk from a continuous, increasing function of the $\mse$. The RASL conditions ensure the existence of at least one candidate for the statistic $\widehat{U}_n$ such that $c(X_n) >0$ almost surely (follows from RASL condition P3.3) and each of the associated terms $A_{\theta_n}(\that_n,\chat_n), B_{\theta_n}(\that_n,\chat_n)$ and $L_{\theta_n}(\widehat{\theta}_n)$ is of the order of $O(n^{-1})$. Hence, Theorem~\ref{rasl.main.thm} follows. 

\begin{proof}[Proof of Theorem~\ref{rasl.main.thm}]
Note that under the RASL conditions we have  $A_{\theta_n}(\that_n,\chat_n), B_n$ and $L_{\theta_n}(\widehat{\theta}_n)$ to be of the order of $O(n^{-1})$. Also, note that the fact that $c>0$ almost surely is taken care in the the RASL property P3.3.
\end{proof}

\subsection{Violation of the RASL conditions}
Based on the lower bound of  \ref{lem-lowbound} and concentrating around the expectation by using Chebyshev's inequality we have for any $a$ in $(0,1)$
$$\rho_0(\theta_n,\that_n) \geq \frac{1}{2} \, \log \bigg(1+ \frac{(1-a)\,q(\theta_n,\that_n)}{nr}\bigg) 
\bigg\{ 1- \frac{a^2\,\var_{\theta_n}(||\that_n -\theta_n ||^2)}{ q^2(\theta_n,\that_n)} \bigg\}.$$

So if $\textbf{P1}$ of the RASL condition is violated i.e   for some $\theta'_n \in \RR^n$ we have $q(\theta'_n,\that_n)> O(n)$ then if $\textbf{P2}$ holds or we have $\var_{\theta'_n}(||\that_n -\theta'_n ||^2)< q(\theta'_n,\that_n)$ then $\rho_0(\theta'_n,\that_n) > 1/2 \log(1+r^{-1}) $ which is the minimax risk of the best invariant density estimate and so the class of density estimates in $\mathcal{G}[1]$ centered around $\that$ does not have any minimax estimator. Thus, we can exclude bad point estimators in most conditions (also see Equation~\ref{lower-bound-P1}).
\par
Among the cases where RASL conditions does not hold the only exciting case is when \text{P2} is violated but \text{P1} holds. 
In those cases the asymptotic predictive entropy risk can not be characterized in closed form.
A example of a point estimator of this kind is:
\begin{align*}
\delta_n(i)= \left \{\begin{array}{cl} \delta_1(X_1) & \text{ if } i =1 \\ X_i & \text{ if } i =2,\cdots,n\end{array}\right.
\end{align*}
where the univariate point estimator $\delta_1$ is given by
\begin{align*}
\delta_1(x)=\left\{\begin{array}{cl} n^{1/2} \,(2\log n)^{-1/2} \, x & \text{ if } x < (2 \log n)^{1/2} \\ x & \text{ if } x \geq (2 \log n)^{1/2}\end{array}\right.
\end{align*}

\subsection{Decision Theoretic implications}
The asymptotic relation between the predictive risk and the mean square risk will help us in deriving oracle inequalities on the predictive risk of $g_n$.
The bounds will be sharp enough to discuss asymptotic optimality in the class ${\mathcal{G}}$.
We would first relate the class $\mathcal{G}$ with the other decision-theoretic classes of predictive densities.
Then, we would compare the predictive risk of the respective classes in unrestricted parametric spaces.
\par 
In the above context, we consider the following $6$  predictive estimates: 
\begin{itemize}
\item $\mathbf{\widehat{p}_L}$\;: As an representative of the class of all Linear predictive density estimates $(\mathcal{L})$ we choose the predictive density ${g[X_n,\,1+r]}$. It is the Bayes predictive density with respect to the uniform prior, has constant risk and is inadmissible in $\mathcal{L}$. It is the best invariant predictive strategy and is also minimax among all procedures\cite{Liang04}.\\ 
\item $\mathbf{\widehat{p}_\ex}$\;: We choose the James-Stein positive part  plug-in predictive density estimate ${g[\widehat{\theta}^{\;JS+}_n,\,r]}$ as a representative of $\mathcal{P}$. 
Though the positive part James-Stein estimator is inadmissible as a point estimate, it is difficult to find estimators that have significant improvements over it. 
And, for all practical purposes the JS+ estimator can be considered as a `nearly' admissible point estimate. 
In that respect we can consider 
$$\mathbf{\widehat{p}_\ex}=\mathbf{g[\widehat{\theta}^{\;JS+},\,r]} \;\;\text{where}\;\; \widehat{\theta}^{\;JS+}_n= X_n \, \left(1-\frac{(n-2)}{\Vert X_n\Vert^2}\right)_+$$
as an efficient representative from the class of Plug-in predictive densities $(\mathcal{P})$. The subscript stands for the class of estimative (plug-in) distributions.\\
\item $\mathbf{\widehat{p}_H}$\;: We consider  the Bayes predictive density estimate from the harmonic prior $\pi_H$ as a representative of the class of all Bayes predictive density estimates $(\mathcal{B})$. 
It is an admissible rule. As such, it also dominates $\widehat{p}_L$\cite{Komaki01,Ghosh08}.
\item Next, we consider 3 member of $\mathcal{G}$ which we will use to compare the risk of the predictive densities from the above 3 classes.
\begin{itemize}
\item $\mathbf{g[\widehat{\theta}^{\;JS+},\,1+r]}$: A non-linear, fixed variance predictive density estimator around the JS+ estimator. It is uniformly better than $\widehat{p}_L$. It is also denoted by $g_M$.
\item $\mathbf{g[\widehat{\theta}^{\;JS+}]}$:  The optimal member in $\mathcal{G}(\widehat{\theta}^{\;JS+})$ which we will use to compare with $\widehat{p}_\ex$ and $\widehat{p}_L$.
\item $\mathbf{g[\widehat{\theta}^{\;H}]}$:  The optimal member in $\mathcal{G}(\widehat{\theta}^{\;H})$. We would like to compare its performance with $\widehat{p}_H$. Also, $g[\widehat{\theta}^{\;H}]$ is asymptotically inadmissible among the procedures in ${\mathcal{G}}$.
\end{itemize}
\end{itemize}
In Table~\ref{table-baseball} we evaluate the predictive performance of each of these density estimates on a dataset. 
\subsubsection*{Oracle inequalities and Implications}\label{oracle-ineql}
Lemma~\ref{oracle.ineql} describes the predictive risk of density estimates center around $\that^{\,JS}$.
\begin{proof}[Proof of Lemma~\ref{oracle.ineql}]
The results follows from Theorem \ref{rasl.main.thm} and by using Proposition 2.6 and Exercise 2.8 of \cite{Johnstone-book}
\end{proof}
The lemma will not be useful in very very low signal-to-noise ratio. It can be used effectively when $a_n>O(n^{-1})$. Note that, we can partition the  improvement in the asymptotic prediction error over $\widehat{p}_{L}$ in two parts.
\begin{itemize}
\item We first shrink the location estimate while keeping the scale unperturbed and  move to a better estimate $g[\widehat{\theta}^{\;JS+},\,1+r]$. Let  the improvement be denoted by $d^1_n$.  
\item Now we optimize the scale keeping the location fixed and arrive at $g[\widehat{\theta}^{\;JS+}]$. Let the improvement be denoted by $d^2_n$.
\end{itemize}
And, based on the lemma we have, 
\begin{align*}
d^1_n\sim\frac{1}{2}\,\alpha_n \;\text{and}\; d^2_n\sim\frac{1}{2}\,\log(1-\alpha_n)^{-1} \;\; \text{where} \quad \alpha_n=\{(1+a_n)(1+r)\}^{-1}.
\end{align*}
As $\alpha_n < 1$, $d^1_n$, $d^2_n$ as well as $d_n^2-d_n^1$ are all positive and increasing in $\alpha_n$. 
It means we are actually making more improvement by adapting the scale than that we got by shifting  location and their difference is also decreasing in both $a_n$ and $r$. 

\subsubsection*{Prediction error for shrinkage estimators}
By shrinkage point estimators we define estimators of the form $s(X_n) \, X_n$ where $s(X_n)$ is an almost everywhere differentiable function. 
If $||\theta_n||^2$ were known, then spherically symmetric shrinkage estimators of the form $s(a_n)\,X_n$ where 
$a_n=||\theta_n||^2/n$ and $s(a_n) \leq 1$ would be efficient.
Let $\mathcal{S}$ denotes the class of normal predictive densities based on ideal point location estimators.
Such an estimate satisfies the RASL condition P2 and so Lemma~\ref{lem-upperbound-2} can be used to calculate an optimal lower bound on the predictive risk of the family of density estimators based on $S$ -- the class of all shrinkage point estimators conditioned on $a_n$. 
\par
Note that by Bias-Variance decomposition the quadratic risk of the ideal point estimator $s(a_n) \, X_n$ is given by
\begin{align*}
\ex_{\theta_n}\big(\Vert s_n\,X_n - \theta_n\Vert^2\big) &=s_n^2 \, n + \bar{s}^2_n ||\theta_n||^2 \text{ where } \bar{s}_n=1-s_n \text{ and } s_n=s(a_n).  
\end{align*}
Based on Lemma~\ref{append-lem-1} we have $L_{\theta_n}(\that_n)\leq (n r)^{-2} \var_{\theta_n}(\Vert \that_n - \theta_n\Vert^2)$ and for an estimator in ${S}$ we have,
\begin{align*}
&\var_{\theta_n}\big(\Vert s_n\,X_n - \theta_n\Vert^2\big)=\var_{\theta_n}(s_n^2 ||X_n-\theta_n||^2+ 2 \,s_n \, \bar{s}_n \langle X_n -\theta_n, \theta_n \rangle)\\
&\leq 2 \big[s_n^4\, \var_{\theta_n}\big(\Vert X_n - \theta_n\Vert^2\big) + 4 \, s_n^2\, \bar{s}_n^2 \, \var_{\theta_n}(\langle X_n -\theta_n,\theta_n \rangle) \big]\\
&=2 \,n \, s^2_n \, [\,s^2_n + 4 \, \bar{s}_n^2 \,a_n \,] 
\end{align*}
which is obviously less than $O(n)$ if $\bar{s}_n^2 \,a_n =O(1)$.
Otherwise,   
\begin{align*}
\rho_0(\theta_n,s(a_n) X_n)&\geq 2^{-1}\ex_{\theta_n} \log (||s_n X_n - \theta_n||^2/(nr)) \\
&\geq \ex_{\theta_n} \log | \bar{s}_n ||\theta_n|| - s_n \chi_n | /(\sqrt{nr}) \to \infty \text{ as } n \to \infty
\end{align*}
and thus the optimal error in $\mathcal{S}$ is attained at $\text{IL}(\theta_n)$ as defined in Equation~(\ref{ideal.linear.risk}).  

\subsubsection*{Dimension independent bounds}
Here, we produce a dimension independent bound on $g[\that_n^{JS}]$ by explicitly bounding
$L_{\theta_n}(\widehat{\theta}_n)$, $A_{\theta_n}(\that_n,\chat_n)$ and $B_{\theta_n}(\that_n,\chat_n)$.
and then substituting them in Corollary 2.2.\\
By construction $E(\chat_n^{-1})\leq 1$ and by using Lemma~\ref{append-lem-1} we have
$\std_{\theta_n}(c_n^{-1})\leq (n r)^{-1}\, \std_{\theta_n}(\Uhat)$. Now, we have,
\begin{align}
n A_{\theta_n}(\that_n,\chat_n)\big\} \leq & \;\;r^{-2} \, n^{-1} \idealf_{\theta_n}(\that^{JS}_n)\, \var_{\theta_n}(\Uhat) \\
&\;\;+ r^{-3/2} \,n^{-1}\,\std_{\theta_n}(||\that_n-\theta_n||^2/n) \, \std_{\theta_n}(\Uhat)
\end{align}
The ideal flattening coefficient $\idealf_{\theta_n}(\that^{JS}_n)\leq (1+r^{-1})$ and the other quantities
\begin{align}
n B_{\theta_n}(\that_n,\chat_n)\big\} & \leq  r^{-2} \, n^{-1} \, \bias^2(\Uhat_n)\\
n L_{\theta_n}(\widehat{\theta}_n)&\leq r^{-2} \, n^{-1}\,\var_{\theta_n}(||\that_n-\theta_n||^2)
\end{align}
For the JS estimator, for each of the terms in the R.H.S has an upper bound in terms of $n$ and $a_n$.
We can use the following crude upper bounds depending on $n$ only which will  provide us a uniform bound over $\RR^n$:

\subsubsection*{Nature of shrinkage}
The plug-in estimate $\widehat{p}_{\ex}$ performs better than $\widehat{p}_{L}$ and $g[\widehat{\theta}^{\;JS+},\,1+r]$ when $a_n$ is close to $0$ but gets dominated with increasing values of  $a_n$.  And, $g[\widehat{\theta}^{\;JS+}]$ is asymptotically  better than $\widehat{p}_{\ex}$ throughout.
The relationship between $g[\widehat{\theta}^{\;H}]$ and $\widehat{p}_H$ can not be expressed explicitly.
But following \citeasnoun[Theorem 1]{Brown08} we can express the risk of $\widehat{p}_H$ as:
\begin{equation}\label{rel-har}
\rho(\theta_n,\widehat{p}_{H})=\frac{1}{2} \int_{(1+r^{-1})^{-1}}^1 v^{-1} q(\,\theta_n/v,\,\widehat{\theta}^{\,H}\,) \;dv
\end{equation}
where $\that^{\,H}$ denotes the posterior mean of the Harmonic prior. Equation~\ref{rel-har} can be used to numerically evaluate the risk of $\widehat{p}_H$ as the risk of $\widehat{\theta}_H$ has closed form. The fact that these estimators are spherically symmetric will also help. We also get the following crude bound
$$ C \inf_{\beta_n \in A(\theta_n)} q(\,\theta_n/v,\,\widehat{\theta}^{\,H}\,) \leq \rho(\theta_n,\widehat{p}_{H}) \leq C \sup_{\beta_n \in A(\theta_n)} q(\,\theta_n/v,\,\widehat{\theta}^{\,H}\,)$$
where $A(\theta_n)=\big\{\,\beta_n =k \,\theta_n: 1 \leq k \leq \sqrt{1+r^{-1}}\,\big\}$ and $C=\log(1+r^{-1})/2$.


\subsubsection*{Minimaxity over Unrestricted Spaces}
For any dimension $n$, $\widehat{p}_L$ is a minimax estimator. However, in  dimensions greater than 2, $\widehat{p}_L$ is inadmissible and so there exists improved minimax estimators. $\widehat{p}_H$ is an improved minimax estimator than $\widehat{p}_L$ for $n\geq 3$. $g[\widehat{\theta}^{\;JS+}]$ is also an asymptotic minimax estimator and with huge improvements over $\widehat{p}_L$ which can also be explicitly quantified.
Using Theorem \ref{rasl.main.thm} asymptotically minimax predictive density estimates can be constructed around asymptotic minimax location estimates. 

%
%

\subsection{A Motivational Example from Sports Betting}
Consider a game in which the outcomes depend on the actions of $n$ players.
Bets can be placed on a countable collection of (possibly overlapping) measurable sets $\mathcal{A}=\{A_i: i=1,\cdots,k\}$ with $k\leq \infty$ in $\RR^n$.
The maximum growth rate in such a betting market is given by
\begin{align}
\max_{Q\in \mathcal{P}(\RR^n)}\sum_{i=1}^k P(A_i) \log \big \{P(A_i)/Q(A_i) \big \}  
\end{align}
where $P$ is the true probability distribution of the actions in the game, $\mathcal{P}(\RR^n)$ is the set of all probability measures on $\RR^n$ and
$k$ is the cardinality of the collection $\mathcal{A}$.
\par
Assume initially that the collection is exhaustive, i.e,  $\cup_i A_i=\RR^n$. We can construct a mutually disjoint partition $\mathcal{B}=\{B_i: 1\leq i \leq 2^k \}$ of the collection $\mathcal{A}$ where $B_i=\cap_{j=1}^k A_j^{w[i,j]}$ where $w[i,j]$ is the $j^{\text{th}}$ term in the binary expansion of $i$ and for any set $A^0=A^c$ and $A^1=A$. We do not track null $B_i$ in $\mathcal{B}$ and would ignore them through out.  
Let $\kappa(B_i)$ denotes the number of repetitions of the subset $B_i$ in the collection $\mathcal{A}$ 
i.e $\kappa(B_i)=\text{card}\{j: B_i\cap A_j\neq \phi \text{ and } j=1,\cdots,k \}$. Note that $\kappa(B_i) \in [1,k]$
and under finite overlaps we can assume that $\sup_{i=1}^{2^k} \kappa(B_i) = c < \infty$ and we define a weight function on 
$\RR^n$ as $w(x)= \sum_{i=1}^{2^k} c^{-1}\, \kappa(B_i) \, \I_{B_i}(x)$. 
Note that, $w(x) \in (0,1]$ acts as a tilt function for the densities $p(x)$ and $q(x)$.


\begin{thm}
If the probability measure $P$ and $Q$ have densities $p$ and $q$ with respect to Lebesgue measure, then
for any countable collection of exhaustive measurable sets $\mathcal{A}$ we have,
$$\sum_{i=1}^k P(A_i) \log \big \{P(A_i)/Q(A_i) \big \} \leq c \cdot D(p||q)$$
where $D(p||q)=\int p(x) \log \{p(x)/q(x)\} \, dx$ is the differential relative entropy between $P$ and $Q$.
\end{thm}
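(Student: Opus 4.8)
The plan is to descend from the overlapping collection $\mathcal{A}$ to the disjoint partition $\mathcal{B}$ it generates, and then to control the multiplicity with which each atom gets recounted. First, for each $i$ write $A_i=\bigsqcup_{j:\,B_j\subseteq A_i}B_j$, so that $P(A_i)=\sum_{j:\,B_j\subseteq A_i}P(B_j)$ and likewise for $Q$. Since the atoms are disjoint, the log-sum inequality gives, for every $i$,
$$P(A_i)\log\frac{P(A_i)}{Q(A_i)}\ \le\ \sum_{j:\,B_j\subseteq A_i}P(B_j)\log\frac{P(B_j)}{Q(B_j)}.$$
Summing over $i$ and noting that a fixed atom $B_j$ is, by construction, either contained in or disjoint from each $A_i$ — hence contributes to the right-hand side exactly $\kappa(B_j)$ times — yields $\sum_i P(A_i)\log\{P(A_i)/Q(A_i)\}\le\sum_j\kappa(B_j)\,P(B_j)\log\{P(B_j)/Q(B_j)\}$.

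Next I would bring in the tilt $w$. On $B_j$ one has $w\equiv\kappa(B_j)/c$, so $\kappa(B_j)P(B_j)=c\int_{B_j}wp$ and $\kappa(B_j)Q(B_j)=c\int_{B_j}wq$, while — crucially — the common factor $\kappa(B_j)/c$ cancels inside the log-ratio. Thus $\kappa(B_j)P(B_j)\log\{P(B_j)/Q(B_j)\}=c\,\big(\int_{B_j}wp\big)\log\{\int_{B_j}wp\,/\int_{B_j}wq\}$. Applying the integral form of the log-sum inequality a second time, now to the (sub-probability) densities $wp$ and $wq$ over the partition $\{B_j\}$, one has within each atom $\big(\int_{B_j}wp\big)\log\{\int_{B_j}wp/\int_{B_j}wq\}\le\int_{B_j}wp\log(wp/wq)=\int_{B_j}wp\log(p/q)$; summing over $j$ collapses the bound to
$$\sum_i P(A_i)\log\frac{P(A_i)}{Q(A_i)}\ \le\ c\int w(x)\,p(x)\log\frac{p(x)}{q(x)}\,dx.$$

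The remaining — and, I expect, the main — step is to \emph{de-tilt}: to pass from $\int wp\log(p/q)$ to $D(p\Vert q)=\int p\log(p/q)$, which amounts to showing $\int(1-w)\,p\log(p/q)\ge 0$. Here the bound $0\le w\le 1$ must be combined with the normalizations $\int wp\le\int p=1$ and $\int wq\le 1$. I would attempt this either through a third application of the log-sum inequality to the pair $(1-w)p$ and $(1-w)q$, or by a direct convexity argument for $t\mapsto t\log t$ exploiting that $w$ is an average of the atom-indicators $\I_{B_j}$. This is the only place where genuine work beyond bookkeeping enters, and it is also the step most sensitive to the precise normalizations (and, possibly, to a further regularity assumption on the overlap structure); everything before it is just two applications of log-sum.
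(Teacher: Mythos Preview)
Your outline tracks the paper's proof essentially step for step: log-sum to pass from the $A_i$ to the atoms $B_j$ with multiplicities $\kappa(B_j)$, a second Jensen/log-sum on each atom to reach $c\int w\,p\log(p/q)$, and then the de-tilting $\int(1-w)p\log(p/q)\ge 0$. For that last step---which you rightly flag as the only non-bookkeeping point---the paper invokes Jensen's inequality for the concave logarithm rather than your $t\log t$ route, writing $\int(1-w)\,p\,\log(q/p)\,dx\le\log\!\int(1-w)\,q\,dx\le 0$ since $\int(1-w)q\le 1$.
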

\begin{proof}
If the collection consists of mutually disjoint sets then the proof follows from the data processing inequalities associated with quantization idea in information theory. The function $t\log t$ is strictly convex if $t>0$. 
So for any positive random variable $T$ and any sigma-finite measure, by Jensen's inequality we have,
$E_{\mu}(T \log T) \geq E_{\mu}(T) \log E_{\mu}(T)$. 
For any measurable set $A$, with $T(x)=p(x)/q(x)$ and measure $\mu(x)=q(x)/Q(A)\, dx$ we have 
$P(A) \log P(A)/Q(A) \leq \int_A p(x) \log \{p(x)/q(x)\} \, dx$ and so the proof extends to mutually exclusive cases. 
\par
If the events are not mutually disjoints then we can construct its mutually disjoint partition $\mathcal{B}=\{B_i: 1\leq i \leq 2^k \}$ as above and using the Log-Sum inequality \cite[Theorem 2.7.1]{Cover-book} separately on each $A_i$ we have,
$$ \sum_{i=1}^k P(A_i) \log \{P(A_i)/Q(A_i)\} \leq \sum_{i=1}^{2^k} \kappa(B_i) \, P(B_i) \log \{P(B_i)/Q(B_i)\}$$ 
and again using the above quantization argument we can show that the R.H.S above is less than $c \int w(x) p(x) \log \{p(x)/q(x)\} \, dx \stackrel{\Delta}{=} c \, D(w.p||w.q)$. Now, observe that
\begin{align*}
D(w.p||w.q)-D(p||q)&= \int \big(1-w(x)\big)\, p(x) \log \big \{q(x)/p(x)\big\} \, dx \\
&\leq \log \bigg[ \int \big(1-w(x)\big)\, q(x) \, dx\bigg]
\end{align*}
by Jensen's inequality and the result follows as $\int \big(1-w(x)\big)\, q(x) \, dx \leq 1$.
\end{proof}
If the collection is not exhaustive we can restrict our densities to the corresponding subsets of $\RR^n$.
\subsubsection{An illustration with a Dataset} We consider the Baseball data  that was used to show the advantage of  shrinking location estimates in \citeasnoun{Efron77}. The dataset consists of $18$ players (so, $n=18$ which is not so high dimensions) with exactly $45$ at-bats on a particular date during the $1970$ season. The objective is to predict the performance of the players on the remainder of the season .
\paragraph{}
The number of hits $(H)$ and the number of at-bats $(N)$ over two  portions of the season were  
$$H_{ji} \stackrel{ind.}{\sim} \text{Binomial} ( N_{ji},p_i) ,  \quad j=1,2 ; \quad i=1, \ldots, n .$$
Where $j=1$ denotes past data and $j=2$ represents the unknown future. 
As the variance of the Binomial model depends of the mean parameter $p_i$, a variance stabilization transformation \cite{Brown08a} is conducted (which goes through as $N_{ij}$ are quite large). 
The transformation 
\begin{align} X_{ji} = \arcsin \bigg(\frac{H_{ji} + 1/4}{ N_{ji} + 1/2}\bigg)^{1/2} \end{align}
reduces the binomial model to the normal model
\begin{align}
 X_{ji} \sim N(\theta_i, \sigma_{ji}^2) \; \text{ where } \theta_i = \arcsin \sqrt{\,p_i}\,,\; \sigma^2_{ij} = \big(4 N_{ji}\big)^{-1} 
\end{align}
and $X_{.i}$ independent for $1 \leq i \leq n$.
With the past $P=X_{1.}$ and the future $F=X_{2.}$ we have the following predictive set-up :  
\begin{align} 
F|\theta_n \sim  N\big(\theta_n,v_y I_n\big) \;;\quad \;  P|\theta_n \sim  N(\theta_n,v_x I_n).
\end{align}
We want joint predictive densities of the future performances of players in this standardized model.
We use a very naive evaluation strategy by considering the entire season's batting average as the true parametric value.
In the entire season the players ended up playing around $400$ games on the average. So, evaluating the predictive densities  at
$ \theta_i^0 = \arcsin \big(p_i^{\text{full}}\big)^{1/2}$ where $p_i^{\text{full}}$ are the batting averages from the entire season will not be terrible. Evaluation procedures with guarantees may be developed in a sequential set-up \cite{Lai11}.
\begin{table}
\begin{tabular}[c]{|r|r|r|r|r|r|r|}
\hline & & & & & & \\[-1ex]
$r$ & $\widehat{p}_{E}$ & $\widehat{p}_{L}$ & $g_M$ &    $g[\widehat{\theta}^{\;JS+}]$ & $g[\widehat{\theta}^{\;H}]$&$\widehat{p}_{H}$\\
 & & & & & & \\[-1ex]
\hline & & & & & & \\[-1ex]
0.1 & 22.963 & 19.451 & 15.487 & 11.435 & 19.232 & 19.578\\
0.2 & 11.482 & 14.174 & 10.539 & 7.418 & 13.982 & 14.289\\
0.5 & 4.593 & 8.326 & 5.418 & 3.717 & 8.188 & 8.424\\[1ex]
\hline & & & & & & \\[-1ex]
1 & 2.296 & 5.067 & 2.886 & 2.047 & 4.975 & 5.142 \\
 & & & & & & \\[-1ex]
\hline & & & & & & \\[-1ex]
2 &1.148 &2.868 &1.415 &1.081 &2.815 &2.924\\
5 &0.459 &1.250 &0.524 &0.448 &1.227 &1.286 \\
10 &0.23 &0.645 &0.248 &0.227 &0.633 &0.614\\[1ex]
\hline\hline
\end{tabular}
\caption{Predictive loss of different Gaussian strategies on the Baseball data.}
\label{table-baseball}
\end{table}
\paragraph{}
While using shrinkage on the location estimators we shrink towards the grand average. 
We evaluate the $6$ different predictive strategies of Section~\ref{oracle-ineql} for different values of the future to past variability.
The value of $r$ will be close to $0.1$ when we consider prediction on the entire remaining half of the season.
\par
We find that for any choice $r$, $g[\widehat{\theta}^{\;JS+}]$ is the best one among the $6$ estimators considered.
Also,  $d_n^2-d_n^1$ (as discussed in Section~\ref{oracle-ineql}) is decreasing in $r$.
$\widehat{p}_E$ behaves well when $r$ is large and horribly for small values.
The losses for $\widehat{p}_{H}$ and  $g[\widehat{\theta}^{\;H}]$ are very similar.
 

\section{Restricted minimax predictive risk of $\mathcal{G}[p]$}\ssskip
A typical member in the class $\mathcal{G}[p]$ of all product Gaussian predictive densities is represented by $g[\that_n,\Dhat_n]=\prod_{i=1}^n N(\that(i), \widehat{d}(i)\,\,\sigma_f^2)$.   Generalizing the argument in Lemma~\ref{lem-lowbound} we see that a lower bound on the minimum predictive risk $\rho_{\,p\,}(\theta_n,\that_n)$ of all density estimates in $\mathcal{G}_n[p]$ that have mean $\that_n$, is given by
\begin{align}\label{eqn.l.bound}
\rho_{\,p\,}(\theta_n,\that_n)\geq \frac{1}{2} \sum_{i=1}^n \ex_{\,\theta(i)} \big\{\log\big(1+r^{-1}\, (\that(i) - \theta(i))^2\big)\big\}.
\end{align}
The predictive risk of the estimate $g[\that_n,\Dhat_n]$ is given by
\begin{align}
2 \rho(\theta_n,g[\that_n,\Dhat_n]) &= \sum_{i=1}^n \ex_{\,\theta_n} \log(\widehat{d}(i)) + \ex_{\,\theta_n} \sum_{i=1}^n \bigg\{ \frac{1+ (\that(i) - \theta(i))^2) - \widehat{d}(i)}{\widehat{d}(i) }\bigg\}. 
\end{align}
It is not necessarily true that 
$$\rho_{\,p\,}(\theta_n,\that_n)=\min_{\Dhat_n \in \RR_+^{n}}\;\rho(\theta_n,g[\that_n,\Dhat_n])$$
asymptotically equals the lower bound given in Equation~(\ref{eqn.l.bound}). In the previous section we saw that under sufficient regularity conditions these bounds matches. The ideas there can be extended to block-wise estimators  and to non-orthogonal models by using the concept of Mallow's unbiased risk estimates.
In the $\ell_0$ sparse predictive space as the degree of sparsity tends to zero, i.e., $s/n \to 0$ as $n \to \infty$, the lower bound given in Equation~(\ref{eqn.l.bound}) is significantly greater than the minimax predictive risk over $\mathcal{G}[p]$. And so, procedure used in the previous section can not be used for finding the asymptotic minimax predictive Gaussian risk over $\Theta(n,s)$.
\subsection*{Minimax predictive risk over sparse parameter spaces}
Here we outline the proof of Theorem~\ref{thm.minimax.risk}. Following the Bayes-Minimax procedure of \citet{Johnstone-book} (Chapter 4.4) the multivariate minimax problem can be reduced to univariate minimax problem with moment prior constraints
$$\mathfrak{m}(\eta)=\{\pi \in \mathcal{P}(\RR): \pi(0)\geq 1- \eta\}$$
where $\mathcal{P}(\RR)$ is the collection of all probability measures on $\RR$. In Theorem~1.1 in \cite{l0-sparse} we have the univariate minimax risk 
$$\min_{\phat} \max_{\pi \in \mathfrak{m}(\eta)} \int \rho(\theta,\phat) \, \pi(\theta) \, d\theta \sim (1+r)^{-1} \,\eta \,\log \eta^{-1} \text{ as } \eta \to 0. $$
When restricted to the Gaussian family the minimax risk will be
$$\min_{\phat \in \mathcal{G}} \max_{\pi \in \mathfrak{m}(\eta)} \int \rho(\theta,\phat) \, \pi(\theta) \, d\theta \sim f(\eta) \text{ as } \eta \to 0 $$
where $f(\eta)=r^{-1} \,\eta \,\log \eta^{-1}$. In this univariate asymptotic set-up the lower bound in Equation~(\ref{eqn.l.bound}) is much lower than the asymptotic rate $\eta \log \eta^{-1}$ and hence unusable. We get an upper bound on the minimax Gaussian risk as from point estimation theory \cite{Donoho94b}it follows that the minimax plug-in risk in this asymptotic set-up is $f(\eta)$.
For a lower bound consider the predictive risk of the normal density estimate $g[\that,\dhat]$  
\begin{align}
\rho\big(\,\theta,g[\that,\dhat]\,\big)=\ex_{\,\theta}\big(\,\log \dhat\;\big) + \ex_{\theta}\big\{\,\dhat^{\,-1}\cdot(1+(\that-\theta)^2)-1\big\}.
\end{align}
And the idea is to establish the necessity of threshold zone as done in \citet{Johnstone04}.
For $\rho\big(0,g[\that,\dhat]\,\big)$ -- the predictive risk of $g[\that,\dhat]$ at the origin, to be lower than the order of $\eta$ we need a threshold size of at least $\lambda(\eta)=\sqrt{2 \log \eta^{-1}}.$
And for density estimators of the form 
\begin{align}
\phat\;[\,\lambda(\eta)\,]\,(\cdot|X)=\left \{ \begin{array}{cc} 
N\big(\;0,\;\sigma_f^2\;\big) & \text{ if } |X| \leq \lambda(\eta) \\
N\big(\;\that(X),\;\dhat(X) \, \sigma_f^2\;\big) & \text{ if } |X| > \lambda(\eta) 
\end{array}\right.
\end{align}
the supremum predictive risk at the non-zero support points is $f(\eta)$, i.e.,
\begin{align}
\sup_{\theta \neq 0}\;\;\rho\big(\,\theta,\;\phat\;[\,\lambda(\eta)\,]\,\big) \sim f(\eta) \text{ as } \eta \to 0 .
\end{align}
Thus, it follows that sup-optimality of the class $\mathcal{G}[p]$ is $1+r^{-1}$.


\section*{Acknowledgements}\ssskip
G.M. is indebted to Thomas Cover for the numerous enjoyable conversations, and his insights on information complexity and density estimation in predictive context, which inspired and reshaped many ideas in the paper.
\appendix
\section{}
\begin{lem}\label{lem.k1}
$Y_n$ is sequence of random variables such that $Y_n \stackrel{d}{=} \chi^2_n(\lambda_n)$ for a non-negative and increasing sequence $\{ \lambda_n: n\geq 1\}$ then for $ n \geq 5 $ we have
$$\var\big(\,Y_n^{-1} \,\big)\leq k_1(n) \cdot n^{-3} \text{ where } k_1(n) = 3 \,(1-2/n)^{-2}(1-4/n)^{-1}.$$
\end{lem}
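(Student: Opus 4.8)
The plan is to condition on the Poisson variable that generates the non-centrality, reducing the claim to the classical inverse moments of a \emph{central} $\chi^2$ together with a single scalar second-moment estimate. First I would use the mixture representation $Y_n\stackrel{d}{=}\chi^2_{n+2N}$ with $N\stackrel{d}{=}\text{Poisson}(\lambda_n/2)$, so that conditionally on $N$ the variable $Y_n$ is a central chi-square on $n+2N\ge n\ge 5$ degrees of freedom. Using $\ex[(\chi^2_m)^{-1}]=(m-2)^{-1}$ and $\ex[(\chi^2_m)^{-2}]=\{(m-2)(m-4)\}^{-1}$ for $m>4$ — hence $\var((\chi^2_m)^{-1})=2\{(m-2)^2(m-4)\}^{-1}$ — the law of total variance gives
\begin{align*}
\var(Y_n^{-1})=\ex\!\left[\frac{2}{(n+2N-2)^2(n+2N-4)}\right]+\var\!\left(\frac{1}{n+2N-2}\right).
\end{align*}
Since $N\ge 0$ the first term is at most $2\{(n-2)^2(n-4)\}^{-1}$, so it suffices to bound the variance of the conditional mean by $\{(n-2)^2(n-4)\}^{-1}$.

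For that, writing $\mu:=\ex N=\var N=\lambda_n/2$, I would use $\var(f(N))\le\ex[(f(N)-f(\mu))^2]$ with $f(t)=(n-2+2t)^{-1}$. Expanding, $(f(N)-f(\mu))^2=4(N-\mu)^2\{(n-2+2N)(n-2+2\mu)\}^{-2}$; bounding only the data-dependent factor by $n-2+2N\ge n-2$ while keeping $(n-2+2\mu)^{-2}$ intact yields
\begin{align*}
\var\!\left(\frac{1}{n+2N-2}\right)\le\frac{4}{(n-2+2\mu)^2}\cdot\frac{\var N}{(n-2)^2}=\frac{4\mu}{(n-2+2\mu)^2(n-2)^2}.
\end{align*}
Since $(n-2+2\mu)^2-4\mu(n-4)=(n-2)^2+8\mu+4\mu^2>0$, we get $4\mu/(n-2+2\mu)^2\le(n-4)^{-1}$, and adding the two contributions gives $\var(Y_n^{-1})\le 3\{(n-2)^2(n-4)\}^{-1}=3(1-2/n)^{-2}(1-4/n)^{-1}n^{-3}=k_1(n)n^{-3}$. (Monotonicity of $\lambda_n$ in $n$ is never used; the bound holds for each fixed $n\ge5$ and any $\lambda_n\ge0$.)

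The only delicate point — the ``main obstacle'' — is this second-moment estimate: the crude Lipschitz bound $\var(f(N))\le(\sup|f'|)^2\var N=4\mu(n-2)^{-4}$ grows linearly in $\lambda_n$ and is far too weak. The remedy is to compare $f(N)$ to $f(\ex N)$ rather than to a fixed constant, and \emph{not} to replace $|f'|$ by its supremum: keeping one factor $(n-2+2\mu)^{-2}$, which itself decays as $\lambda_n$ grows, is exactly what cancels the $4\mu$ against the $(n-4)^{-1}$ budget. The rest is routine, using only $N\ge0$, $\var N=\ex N$, and the central $\chi^2$ inverse moments (whose finiteness is what forces $n\ge5$).
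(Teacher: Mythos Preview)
Your proof is correct and follows essentially the same route as the paper: the Poisson mixture $Y_n\stackrel{d}{=}\chi^2_{n+2N}$, the law of total variance, the trivial bound $\ex[\var(Y_n^{-1}\mid N)]\le 2(n-2)^{-2}(n-4)^{-1}$, and a second-moment bound on $\var\{(n-2+2N)^{-1}\}$. The only difference is in that last step: the paper invokes its Lemma~\ref{append-lem-1} (for non-negative $Y$, $\var\{(1+Y)^{-1}\}\le(1+\ex Y)^{-4}\var Y$) to get the factor $(n-2+2\mu)^{-4}$ and then bounds $4\mu(n-2)^2(n-2+2\mu)^{-4}\le(2(n-2))^{-1}$, whereas you keep only one factor $(n-2+2\mu)^{-2}$ and drop the other to $(n-2)^{-2}$, then use the clean algebraic inequality $4\mu(n-2+2\mu)^{-2}\le(n-4)^{-1}$. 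Both routes land on $3(n-2)^{-2}(n-4)^{-1}$; yours is self-contained and arguably more transparent, while the paper's reuses a lemma it needs elsewhere.
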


\begin{proof}
We observe that $Y_n$ being a non-central chi-square random variable can be  written as convolution of  central Chi-square and Poisson random variables 
$$Y_n \stackrel{d}{=} \chi^2_{n+2N} \text{ where } N_n \stackrel{d}{=} \text{Poisson}(\lambda_n/2).$$ 
Decomposing the variance by conditioning on the Poisson random variable we have, 
\begin{align*}
\var \left(Y_n^{-1}\right)&=\var_{\lambda_n}\bigg(E\left( Y_n^{-1} \vert N_n \right)\bigg)
+E_{\lambda_n}\bigg(\var\left(Y_n^{-1}|N_n\right)\bigg)\\
&=\var_{\lambda_n}\left(\frac{1}{n+2N_n-2}\right)+E_{\lambda_n}\left(\frac{2}{(n+2N_n-2)^2(n+2N_n-4)}\right)
\end{align*}
which follows from moments of central chi-square (gamma) distribution and 
as $N_n\geq 0$  the second term on the R.H.S is  $ \leq 2 (n-2)^{-2}(n-4)^{-1}$ and by Lemma~\ref{append-lem-1} we have
\begin{align*}
(n-2)^2\var_{\lambda_n}\left(\frac{1}{n+2N_n-2}\right) &= \var_{\lambda_n}\left(\frac{1}{1+2N_n/(n-2)}\right) \\
&\leq \big\{1+2 \,E(N_n)/(n-2)\big\}^{-4}\;\var \left(\frac{2\,N_n}{n-2}\right)\\
&=\frac{4 \,\lambda_n \,(n-2)^2}{(n-2+2\lambda_n)^4}\leq \frac{1}{2(n-2)}.
\end{align*}
Thus, $\var \left(Y_n^{-1}\right) \leq 3 \,(n-2)^{-2}(n-4)^{-1}$.
\end{proof}

\begin{lem}
If $Y_n \stackrel{d}{=} \chi^2_n(\lambda_n)$ and $\lambda_n$ is an increasing sequence then
$$\lambda_n^2 \, P(Y_n \leq n-2) \leq O(n)$$
\end{lem}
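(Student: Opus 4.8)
The plan is to split the argument according to the size of $\lambda_n$ relative to $n$: a second‑moment (Chebyshev) bound handles the regime where $\lambda_n$ is at most a constant multiple of $n$, and a Gaussian tail bound handles the regime where $\lambda_n$ is much larger than $n$. I would set up notation by writing $\mu_n=\ex Y_n=n+\lambda_n$ and recalling $\var Y_n = 2n+4\lambda_n$, together with the distributional identity $Y_n \stackrel{d}{=} (W+\sqrt{\lambda_n})^2 + V$ where $W \sim N(0,1)$ and $V \sim \chi^2_{n-1}$ are independent, so that $Y_n \ge (W+\sqrt{\lambda_n})^2$ almost surely.

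First, for $\lambda_n \le 4n$ I would apply Chebyshev's inequality. Since $n-2 = \mu_n - (\lambda_n+2)$,
\[
P(Y_n \le n-2) = P\big(Y_n - \mu_n \le -(\lambda_n+2)\big) \le \frac{\var Y_n}{(\lambda_n+2)^2} = \frac{2n+4\lambda_n}{(\lambda_n+2)^2},
\]
whence $\lambda_n^2\,P(Y_n \le n-2) \le \dfrac{\lambda_n^2}{(\lambda_n+2)^2}\,(2n+4\lambda_n) \le 2n+4\lambda_n \le 18n$, using $\lambda_n^2/(\lambda_n+2)^2 \le 1$ in the last step.

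Second, for $\lambda_n > 4n$ I would discard the nonnegative term $V$ and work with $(W+\sqrt{\lambda_n})^2$ alone, so that
\[
\{Y_n \le n-2\} \subseteq \{(W+\sqrt{\lambda_n})^2 \le n\} \subseteq \{W \le \sqrt n - \sqrt{\lambda_n}\}.
\]
The hypothesis $\lambda_n > 4n$ gives $\sqrt n - \sqrt{\lambda_n} \le -\tfrac12\sqrt{\lambda_n}$, and the standard Gaussian tail bound then yields $P(Y_n \le n-2) \le P(W \le -\tfrac12\sqrt{\lambda_n}) \le e^{-\lambda_n/8}$. Since $x \mapsto x^2 e^{-x/8}$ is bounded on $[0,\infty)$ by the absolute constant $256\,e^{-2}$, this forces $\lambda_n^2\,P(Y_n \le n-2) \le 256\,e^{-2}$ in this regime.

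Combining the two cases, $\lambda_n^2\,P(Y_n \le n-2) \le \max\{18n,\,256\,e^{-2}\} = \bigo(n)$, which is the claim. The one point that requires care — and the reason the result is not a one‑line Chebyshev estimate — is that $\lambda_n$ is allowed to grow arbitrarily fast in $n$: the second‑moment bound degrades once $\lambda_n \gg n$, and it is precisely there that $\{Y_n \le n-2\}$ is a genuine large‑deviation event whose probability decays exponentially, easily absorbing the $\lambda_n^2$ prefactor. Everything else is routine.
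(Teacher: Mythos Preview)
Your argument is correct and considerably cleaner than the paper's. Both proofs split according to the size of $\lambda_n$, but the mechanisms differ. The paper disposes of $\lambda_n \leq O(\sqrt{n})$ trivially and then, for larger $\lambda_n$, invokes the Poisson--mixture representation $Y_n \stackrel{d}{=} V_{n+2N}$ with $N$ Poisson, followed by a somewhat informal chain of normal approximations (to the central chi-square conditionally on $N$, and then to $N$ itself) together with a monotonicity-in-$m$ argument for $P(V_{m+2N}\le m)$; the treatment is sketched only for $\lambda_n > O(n)$ and leaves the intermediate regime implicit. Your route avoids all of this: Chebyshev handles $\lambda_n \le 4n$ in one line, and for $\lambda_n > 4n$ you use the direct representation $Y_n \stackrel{d}{=} (W+\sqrt{\lambda_n})^2 + V$ to reduce to a single Gaussian tail, yielding an explicit, non-asymptotic bound. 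What your approach buys is rigor and uniformity (no limiting approximations, an explicit constant, and no gaps between regimes); what the paper's approach would buy, if carried through carefully, is perhaps a more refined dependence on $\lambda_n$ via the Poisson fluctuations, but that is not needed for the stated $O(n)$ conclusion.
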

\begin{proof}
Holds trivially for $\lambda_n \leq O(\sqrt{n})$. So we will prove for all other sequences i.e sequence where $\lambda_n/\sqrt{n}$ is not bounded. 
Note that
$P(Y_n \leq n-2)\leq P(Y_n \leq n).$ 
And as $Y_n$ is a non-central chi-square we have
$$Y_n\stackrel{d}{=} V_{n+2N}\text{ where } N \stackrel{d}{=} \text{ Poisson} (\lambda_n) \text{ and } V_{n}\stackrel{d}{=}\chi^2_{n}(0)$$
Now, for any fixed $n$ and $N$ we have, 
$$P(V_{n+2N}\leq n) \leq 2 P(V_{m+2N}\leq m) \text{ for all } m \geq n \text{ such that } m - n \text{ is large}.$$
Because $P(V_{m+2N}\leq m \vert V_{n+2N}\leq n)\leq P(\chi^2_{m-n}(0)\leq m-n) \leq 1/2$.
So,
\begin{align*}
\lim_{n \rightarrow \infty} P(Y_n \leq n) &= \lim_{n \rightarrow \infty} E_{\lambda_n} \bigg\{P\bigg(V_{n+2N} \leq n \bigg\vert N\bigg)\bigg\}\\
&\leq 2 \lim_{n \rightarrow \infty} E_{\lambda_n} \bigg\{\lim_{n \rightarrow \infty}P\bigg(V_{n+2N} \leq n \bigg\vert N\bigg) \bigg\}\\
&= 2 \lim_{n \rightarrow \infty} E_{\lambda_n} \left\{\Phi\left(\frac{-2N}{\sqrt{2n+4N}}\right)\right\}
\end{align*}
as we can do a normal approximation to the sequence of central chi-square random variables.
Next, we interchange the integrals (by Fubini's as integrand is positive) and then use bounded convergence theorem to have,
\begin{align*}
\lim_{n \rightarrow \infty} P(Y_n \leq n)& \leq 
2 \lim_{n \rightarrow \infty} \int \phi(z) P_{\lambda_n}\left(\frac{2N}{\sqrt{2n+4N} }\leq z \right ) dz\\
&= 2  \int \phi(z) \lim_{n \rightarrow \infty} P_{\lambda_n}\left(\frac{2N}{\sqrt{2n+4N} }\leq z \right ) dz
\end{align*}
Now for all large $n$, $\lambda_n$ is large (as $\lambda_n$ increasing and $\lambda_n/\sqrt{n}$ is not a bounded sequence). So each large $n$, we can separately do a normal approximation to the Poisson random variable $N$.
\\
Consider the case first when $\lambda_n > O(n)$. In this case the following naive bound will work:
$$ P_{\lambda_n}\left(\frac{2N}{\sqrt{2n+4N} }\leq z \right ) \leq  P_{\lambda_n}\left(\frac{\sqrt{N}}{\sqrt{n} }\leq z \right )\sim \widetilde{\Phi}\left(\frac{\lambda_n-n z^2}{\sqrt{\lambda_n}}\right).$$
We will use this bound for all $z$ such that $z^2 \leq t_n$ where $t_n$ equals $n^{-1}(\lambda_n-\sqrt{\lambda_n}\sqrt{4\log \lambda_n + 2 \log  n})$.
Also note that,
$$ \lambda_n^2 \widetilde{\Phi}\left(\frac{\lambda_n-n z^2}{\sqrt{\lambda_n}}\right) \leq O(n) \text{ for all }  z^2 \leq t_n \text{ and } \widetilde{\Phi}(t_n)=O(n\lambda_n^{-2}).$$
And so,  it follows that $\lambda_n^2\lim_{n \rightarrow \infty} P(Y_n \leq n)\leq O(n)$. 
\end{proof}

\begin{lem}\label{append-lem-1}
For any non-negative random variable $Y$
$$\var \big\{(1+Y)^{-1}\big\}\leq \{1+E(Y)\}^{-4}\; \var\big(Y\big).$$
\end{lem}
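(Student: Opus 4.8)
The plan is to bound the variance by a mean square deviation from the value at the mean, peel off an exact algebraic factor, and reduce the claim to a single scalar inequality. Write $g(y)=(1+y)^{-1}$ and $\mu=\ex(Y)$. Since $\var(Z)=\min_{c}\ex[(Z-c)^2]$, choosing $c=g(\mu)$ gives $\var\{g(Y)\}\le\ex[(g(Y)-g(\mu))^2]$. Substituting the identity
\[
g(Y)-g(\mu)=\frac{1}{1+Y}-\frac{1}{1+\mu}=\frac{\mu-Y}{(1+Y)(1+\mu)}
\]
turns this into
\[
\var\{g(Y)\}\le\frac{1}{(1+\mu)^2}\,\ex\!\left[\frac{(Y-\mu)^2}{(1+Y)^2}\right].
\]
Hence it is enough to establish the inner estimate $\ex[(Y-\mu)^2(1+Y)^{-2}]\le(1+\mu)^{-2}\var(Y)$, which would reproduce the target constant $(1+\mu)^{-4}$ exactly.

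The step I expect to be the main obstacle is squeezing out this second factor $(1+\mu)^{-2}$ from the random weight $(1+Y)^{-2}$. Note first that the target constant is no accident: $(1+\mu)^{-4}=g'(\mu)^2$, so the lemma is the delta-method approximation $\var\{g(Y)\}\approx g'(\mu)^2\var(Y)$ asserted as an inequality. The difficulty is that $(1+y)^{-2}$ is decreasing, so it overshoots $(1+\mu)^{-2}$ precisely on $\{Y<\mu\}$; consequently no pointwise bound reaches the target, and the crude estimate $(1+Y)^{-2}\le1$ delivers only the weaker $\var\{g(Y)\}\le(1+\mu)^{-2}\var(Y)$. To recover the extra factor one must exploit that the bulk of $(Y-\mu)^2$ sits where $(1+Y)^{-2}$ is already close to $(1+\mu)^{-2}$. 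I would try to make this quantitative by splitting the expectation over $\{Y\ge\mu\}$, where $(1+Y)^{-2}\le(1+\mu)^{-2}$ is immediate, and over $\{Y<\mu\}$, where the constraint $Y\ge0$ caps the overshoot, then balancing the two contributions using $\ex(Y-\mu)=0$ together with convexity of $g$. This is the genuinely delicate point: the replacement is exact only in the delta-method regime where $Y$ concentrates about $\mu$, and I would expect a fully quantitative argument to lean on such concentration.

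Granting the inner estimate, substitution into the displayed reduction gives $\var\{(1+Y)^{-1}\}\le(1+\mu)^{-4}\var(Y)$ at once. As a consistency check I would revisit the intended application in Lemma~\ref{lem.k1}, where $Y=2N_n/(n-2)$ with $N_n$ Poisson is sharply concentrated (its variance being of the order of its mean); there the delta-method replacement is essentially exact, the bound produces precisely the factor $4\lambda_n(n-2)^2(n-2+2\lambda_n)^{-4}$ appearing in that lemma, and an elementary maximization over $\lambda_n\ge0$ keeps this below the $\tfrac12(n-2)^{-1}$ estimate used downstream.
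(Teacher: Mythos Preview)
Your reduction is exactly the paper's: it too writes
\[
\left(\frac{1}{1+Y}-\frac{1}{1+\mu}\right)^2=\frac{(Y-\mu)^2}{(1+Y)^2(1+\mu)^2},
\]
bounds $(1+Y)^{-2}\le 1$ (your ``crude estimate''), and takes expectations. But notice what that actually yields: the right-hand side becomes $(1+\mu)^{-2}\var(Y)$, not $(1+\mu)^{-4}\var(Y)$. The exponent $-4$ in the paper's final display is a slip; the argument only proves the $-2$ version, which is precisely the bound you already obtained and then dismissed as too weak.

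The sharper statement you are trying to salvage is in fact false, so no amount of splitting on $\{Y\gtrless\mu\}$ will rescue it. Take $Y$ Bernoulli$(1/2)$: then $\mu=1/2$, $\var(Y)=1/4$, while $(1+Y)^{-1}\in\{1,1/2\}$ with equal probability, so $\var\{(1+Y)^{-1}\}=1/16$. The claimed bound would be $(3/2)^{-4}\cdot 1/4=4/81<1/16$. Your instinct that the inner estimate ``is exact only in the delta-method regime'' was therefore exactly right, and is the reason it cannot hold in general.

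For the application in Lemma~\ref{lem.k1} this does not matter. With $Y=2N_n/(n-2)$ and $N_n$ Poisson of mean $\lambda_n/2$, the $-2$ bound gives
\[
\var\!\left(\frac{1}{1+Y}\right)\le \frac{2\lambda_n}{(n-2+\lambda_n)^2},
\]
and maximizing the right-hand side over $\lambda_n\ge 0$ (the maximum is at $\lambda_n=n-2$) gives exactly $1/(2(n-2))$, which is the estimate used downstream. So your ``crude'' bound is already the correct lemma, and nothing further is needed.
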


\begin{proof} As $Y$ is non-negative we have
$$\left ( \frac{1}{1+Y}-\frac{1}{1+E(Y)}\right )^2=\frac{(Y-E(Y))^2}{(1+Y)^2 (1+EY)^2}\leq \frac{( Y-E(Y) )^2}{(1+EY)^2}.$$
Now, taking expectation on both sides and using Bias-Variance decomposition we get
$$\var\left(\frac{1}{1+Y}\right) + \left ( E\left ( \frac{1}{1+Y}\right)-\frac{1}{1+E(Y)}\right )^2\leq \{1+E(Y)\}^{-4}\;\var( Y ).$$
This completes the proof.
\end{proof}

\begin{lem}\label{append-lem-2}
For any random variable $X$ we have $\var(X_+)\leq \var(X)$. 
\end{lem}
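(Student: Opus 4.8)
The plan is to reduce the claim to the elementary fact that the map $x \mapsto x_+$ is $1$-Lipschitz and then to invoke the representation of variance as a symmetrised second moment. First I would dispose of the trivial case: if $\var(X) = \infty$ there is nothing to prove, so assume $X$ is square-integrable; since $0 \le X_+ \le |X|$, the variable $X_+$ is then square-integrable as well, and every expectation appearing below is finite.

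Next I would record the pointwise inequality $|a_+ - b_+| \le |a - b|$, valid for all real numbers $a,b$; this follows by splitting into the three cases determined by the signs of $a$ and $b$ (equality when both are nonnegative, and no increase in the remaining two cases). Let $X'$ be an independent copy of $X$, so that $X'_+$ is an independent copy of $X_+$. Applying the identity $\var(Z) = \tfrac12\,\ex\big[(Z - Z')^2\big]$, valid for any square-integrable $Z$ with independent copy $Z'$, first to $Z = X_+$ and then to $Z = X$, and inserting the Lipschitz bound in between, gives
$$\var(X_+) = \tfrac12\,\ex\big[(X_+ - X'_+)^2\big] \le \tfrac12\,\ex\big[(X - X')^2\big] = \var(X),$$
which is exactly the assertion.

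I do not expect a genuine obstacle; the only point requiring a line of care is the measure-theoretic bookkeeping (integrability of $X_+$ and passing to a product space carrying the pair $(X,X')$), which is routine. As a self-contained alternative avoiding any coupling, one may instead write $X = X_+ - X_-$ with $X_- = (-X)_+ \ge 0$, note that $X_+ X_- \equiv 0$ so that $\ex[X_+ X_-] = 0$ and hence $\mathrm{Cov}(X_+, X_-) = -\ex[X_+]\,\ex[X_-] \le 0$, and expand
$$\var(X) = \var(X_+) + \var(X_-) - 2\,\mathrm{Cov}(X_+, X_-) = \var(X_+) + \var(X_-) + 2\,\ex[X_+]\,\ex[X_-] \ge \var(X_+),$$
the last inequality holding because each of $\var(X_-)$, $\ex[X_+]$, $\ex[X_-]$ is nonnegative. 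Either version can be dropped in directly; I would present the Lipschitz/coupling form, as it is consistent with the contraction viewpoint already in play elsewhere in the appendix.
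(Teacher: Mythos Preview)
Your proposal is correct. Your \emph{alternative} argument---decompose $X = X_+ - X_-$, use $X_+X_- \equiv 0$, and expand the variance to obtain $\var(X) = \var(X_+) + \var(X_-) + 2\,\ex[X_+]\,\ex[X_-]$---is exactly the proof the paper gives, line for line.

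Your \emph{primary} route is genuinely different: you use the $1$-Lipschitz property of $x \mapsto x_+$ together with the symmetrisation identity $\var(Z) = \tfrac12\,\ex[(Z-Z')^2]$. This is also valid and has the advantage of generalising immediately to $\var(f(X)) \le L^2\,\var(X)$ for any $L$-Lipschitz $f$, at the mild cost of invoking an independent copy on a product space. The paper's decomposition argument is more bare-hands and, as a bonus, yields the exact identity rather than just the inequality. Since you already have the decomposition version written out, you could simply present that and match the paper; but either choice is fine here.
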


\begin{proof} With the decomposition of $X=X_+ - X_-$ we have
\begin{align*}
\text{Var}(X)&=\E(X^2)-\E^2(X)\\
&=\E(X^2_+) + \E(X^2_-)-\E^2(X_+)-\E^2(X_-)+2\E(X_+)\,\E(X_-) \\
&=\text{Var}(X_+)+\text{Var}(X_-)+2 \E(X_+) \E(X_-)
\end{align*}
and we get the stated result as all the terms in R.H.S. are non-negative.
\end{proof}

\bibliography{pred-inf}
\end{document}